\newcommand\MyLBrace[2]{%
	\left.\rule{0pt}{#1}\right\}\text{#2}}
\newcommand{\tikzmark}[1]{\tikz[overlay,remember picture,baseline=(#1.base)] \node (#1) {\strut};}
\theoremstyle{plain}
\newtheorem{theorem}{Theorem}[section]
\newtheorem{lemma}[theorem]{Lemma}
\newtheorem{proposition}[theorem]{Proposition}
\newtheorem{corollary}[theorem]{Corollary}
\theoremstyle{definition}
\newtheorem{definition}[theorem]{Definition}
\newtheorem{example}[theorem]{Example}
\newtheorem{remark}[theorem]{Remark}
\numberwithin{equation}{section}
\newcommand{\C}{\mathbb{C}}
\newcommand{\Z}{\mathbb{Z}}
\newcommand{\ep}{\varepsilon}
\newcommand{\RR}{\mathbb{R}}
\newcommand{\Lie}{{\textup{Lie}}}
\newcommand{\leqnomode}{\tagsleft@true\let\veqno\@@leqno}
\newcommand{\reqnomode}{\tagsleft@false\let\veqno\@@eqno}
\newcommand{\flag}{{\mathcal{F} \ell}}
\newcommand{\w}{{\widetilde{w}}}
\newcommand{\dw}{{\dot{w}}}
\newcommand{\Lam}[2]{{\Lambda^{{(#1)}}_{{#2}}}}
\newcommand{\A}[2]{{A^{{(#1)}}_{{#2}}}}
\newcommand{\X}[2]{{X^{{(#1)}}_{{#2}}}}
\newcommand{\ro}[2]{{\rho^{{(#1)}}_{{#2}}}}
\newcommand{\xii}[2]{{\xi^{{(#1)}}_{{#2}}}}
\newcommand{\bolda}[3]{{\mathbf{a}^{{(#1)}}_{{{#2}}, {{#3}}}}}
\newcommand{\Cstar}{{\C^{\ast}}}
\newcommand{\diag}{{\text{diag}}}
\newcommand{\Ad}{{\text{Ad}}}
\newcommand{\quo}{{\textup{quo}}}
\newcommand{\Pic}{{\textup{Pic}}}
\newcommand{\cA}{{\mathcal{A}}}
\newcommand{\defi}[1]{{\textit{#1}}}
\newcommand{\PjA}[1]{{\Phi_{{#1}}^{\cA}}}
\newcommand{\Cone}{{\textup{Cone}}}
\DeclareMathOperator{\GL}{GL}
\DeclareMathOperator{\U}{U}
\newcommand*\bigcdot{\mathpalette\bigcdot@{.5}}
\newcommand*\bigcdot@[2]{\mathbin{\vcenter{\hbox{\scalebox{#2}{$\m@th#1\bullet$}}}}}
\begin{document}

\title[Flag Bott manifolds]{Flag Bott manifolds and \\
	the toric closure of a generic orbit \\
	associated to a generalized Bott manifold}

\author{Shintar\^o Kuroki}
\address{Faculty of Science, Department of Applied Mathematics, Okayama University of Science \\
	1-1 Ridai-cho Kita-ku Okayama-shi Okayama 700-0005, JAPAN}
\email{kuroki@xmath.ous.ac.jp}

\author{Eunjeong Lee}
\address{Center for Geometry and Physics, Institute for Basic Science (IBS), Pohang 37673, Republic of Korea}
\email{eunjeong.lee@ibs.re.kr}

\author{Jongbaek Song}
\address{School of Mathematics, KIAS, 85 Hoegiro Dongdaemun-gu, Seoul 02455, Republic of Korea}
\email{jongbaek@kias.re.kr}

\author{Dong Youp Suh}
\address{Department of Mathematical Sciences
	\\ KAIST \\ 291 Daehak-ro Yuseong-gu \\ Daejeon 34141 \\ Republic of Korea}
\email{dysuh@kaist.ac.kr}

\thanks{Kuroki was supported by JSPS KAKENHI Grant Number 17K14196.
	Lee was partially supported by IBS-R003-D1.
	Song was partially supported by the POSCO Science Fellowship of 
	POSCO TJ Park foundation and 
	Basic Science Research Program through the National Research Foundation of Korea (NRF) 
	funded by the Ministry of Education (NRF-2018R1D1A1B07048480).
	Lee, Song, and Suh were partially supported by Basic Science 
	Research Program through the National Research Foundation of Korea (NRF) 
	funded by the Ministry of  Science, ICT \& Future Planning 
	(No. 2016R1A2B4010823).}
\subjclass[2010]{Primary 55R10, 14M15; Secondary 57S25, 14M25}
\keywords{flag Bott tower, flag Bott manifold, generalized Bott manifold, 
GKM theory, toric manifold, blow-up}
%\date{\today}

\setcounter{tocdepth}{2} 

%%%%%%%%%%%%%%%%%%%%%%%%%%%%%%%%%%%%%%%%%%%%%%%%%%%%%%%%%%%%%%%%%%%%%%%%
\begin{abstract}
To a direct sum of holomorphic line bundles, we can associate
two fibrations, whose fibers are, respectively,
the corresponding full flag manifold
and the corresponding projective space.
Iterating these procedures gives, respectively, a flag Bott tower and 
a generalized Bott tower.
It is known that a generalized Bott tower is a toric manifold.
However a flag Bott tower is not toric in general but we
show that it is a GKM manifold, and we also
show that for a given generalized Bott tower we can
find the associated flag Bott tower so that 
the closure of a generic torus orbit
in the latter is a blow-up of the former along certain invariant submanifolds.
We use GKM theory together with toric geometric arguments.
\end{abstract}
%%%%%%%%%%%%%%%%%%%%%%%%%%%%%%%%%%%%%%%%%%%%%%%%%%%%%%%%%%%%%%%%%%%%%%%%

\maketitle

%%%%%%%%%%%%%%%%%%%%%%%%%%%%%%%%%%%%%%%%%%%%%%%%%%%%%%%%%%%%%%%%%%%%%%%%
\section{Introduction}
%%%%%%%%%%%%%%%%%%%%%%%%%%%%%%%%%%%%%%%%%%%%%%%%%%%%%%%%%%%%%%%%%%%%%%%%
A Bott tower $M_{\bullet}=\{M_j\mid 0 \leq j \leq m\}$ is a sequence of $\mathbb CP^1$-fibrations
$\mathbb CP^1 \hookrightarrow M_j {\longrightarrow} M_{j-1}$ 
such that $M_j$ is the projectivization of
the sum of two complex line bundles over $M_{j-1}$ where $M_0$ is a point
which is introduced in \cite{GrKa94}. 
Then each $M_j$ is a complex $j$\nobreakdash-dimensional nonsingular algebraic variety called the
\defi{$j$\nobreakdash-stage Bott manifold}.
Each Bott manifold $M_j$ has a $(\mathbb C^\ast)^{j}$\nobreakdash-action with 
which $M_j$ becomes 
a toric manifold, i.e., a nonsingular toric variety.

One of the important properties of Bott manifold is its relation with Bott--Samelson variety. 
A Bott--Samelson variety is a nonsingular algebraic variety that appeared in many areas of mathematics, for instance algebraic geometry and representation theory. For a given complex semisimple Lie group $G$ and a Borel subgroup $B \subset G$, the set of sections of a holomorphic line bundle over a Bott--Samelson variety has a structure of $B$-module, called a \defi{generalized Demazure module} (see~\cite{LLM02standard}). This gives a fruitful connection between representation theory and algebraic geometry. 
It is shown in \cite{GrKa94} and \cite{Pasq10} that every Bott--Samelson variety has a Bott manifold 
as its toric degeneration.\footnote{More precisely, \cite{GrKa94} provides a one-parameter family of complex structures on a Bott--Samelson variety which makes the Bott--Samelson variety into  a Bott manifold. Besides, \cite{Pasq10} constructs a toric degeneration of a Bott--Samelson variety, i.e., there is a flat family $\mathcal{X}$ over $\C$ such that $\mathcal{X}(t)$ is isomorphic to the Bott--Samelson variety for all $t \in \C \setminus \{0\}$ and $\mathcal{X}(0)$ is a Bott manifold. }
This relation between a Bott--Samelson variety and a Bott manifold gives interesting 
results on algebraic representations of $G$ in \cite{GrKa94}. 

Recently, a generalized notion of Bott--Samelson variety, called \emph{flag Bott--Samelson variety}, has been introduced in \cite{LeSu17} which extends the rich connection between representation theory and algebraic geometry given by Bott--Samelson variety.
Indeed, the set of sections of a holomorphic line bundle over a flag Bott--Samelson variety is also a generalized Demazure module. This result is applied to give polyhedral expressions for irreducible decompositions of tensor products of $G$-modules. 

In this article, we define a flag Bott tower $F_{\bullet}=\{F_j\mid 0 \leq j \leq m\}$ to be a sequence of 
the full flag fibrations
$\flag({n_j}+1) \hookrightarrow F_j \overset{p_j}{\longrightarrow} F_{j-1}$ where $F_j$ is the flagification of a sum of
$n_j+1$ many complex line bundles over $F_{j-1}$. We call each $F_j$ a \emph{flag Bott manifold}.
In \cite{LeSu17}, they construct a one-parameter family of complex structures on a flag Bott--Samelson variety which makes the flag Bott--Samelson variety into a flag Bott manifold, and this extends the known relation between Bott--Samelson varieties and Bott manifolds.

One of the goals of this article is to study torus actions on flag Bott manifolds.
In fact, the complex dimension of $F_m$ is $\sum_{j=1}^m n_j(n_j+1)/2$,
but there is an effective action of complex torus $\mathbf{H}$ of dimension $\sum_{j=1}^m n_j$  on $F_m$. Hence a flag Bott manifold is not a toric manifold in general.
With the restricted action of the compact torus $\mathbf{T}$ of  dimension $\sum_{j=1}^m n_j$ on a flag Bott manifold $F_m$, we get the following result:
\begin{theorem}[Theorem~\ref{thm_Fm_is_GKM}]
 Let $F_m$ be an $m$-stage flag Bott manifold with the effective action of $\bf T$. Then~$(F_m, \bf T)$ is a GKM manifold.
\end{theorem}
\noindent Moreover the concrete information of the GKM graph of $F_m$ is
computed in Theorem~\ref{thm_GKM_of_Fm}.

On the other hand, Bott manifolds are an important family of toric manifolds because of the cohomological rigidity problem which asks whether toric manifolds are topologically classified by their cohomology rings. This question has the affirmative answers
for some Bott manifolds (see~\cite{ChMa12}, \cite{Is12}, \cite{Choi15}, \cite{CMM15}). Moreover, it also has the affirmative answer for some generalized Bott manifolds (see~\cite{MaSu08}, \cite{CMS-Trnasaction}, \cite{CPS12}).
Here, a \textit{generalized Bott tower} $B_{\bullet}=\{B_j\mid 0 \leq j \leq m\}$ is defined similarly to a Bott tower 
but the difference is that $B_j$ is the projectivization of the sum of $n_j+1$ many 
complex line bundles 
instead of two line bundles.

Even though generalized Bott towers and flag Bott towers are two different generalizations of
Bott towers, there is an interesting relation between them. Namely,
let $B_{\bullet}$ be a generalized Bott tower with bundle maps $\pi_j \colon B_j \to B_{j-1}$. Then we define the \textit{associated flag Bott tower} $F_{\bullet}$ to $B_{\bullet}$ with bundle maps $p_j \colon F_j \to F_{j-1}$. Note that they satisfy $q_{j-1} \circ p_j = \pi_j \circ q_j$, where  $q_j \colon F_j \to B_j$ is induced by the projection map $\flag(n_j+1) \to \C P^{n_j}$ on each fiber  (see Section~\ref{sec_gBT_and_asso_fBT}). Moreover we prove that a generalized Bott manifold and its associated flag Bott manifold have the following relation: 
\begin{theorem}[Theorem~\ref{thm_main_thm_2}]
	Let $B_m$ be an $m$-stage generalized Bott manifold, and $F_m$ its associated flag Bott manifold. Then the closure of a generic orbit of $\bf H$-action in $F_m$ is the blow-up of $B_m$ along certain invariant submanifolds.
\end{theorem}
\noindent To obtain this result the GKM graph information of $F_m$ 
from Theorem~\ref{thm_GKM_of_Fm} is
essentially used together with some toric topological arguments.

We remark that every flag Bott tower is a $\C P$-tower, i.e., 
a sequence of an iterated complex projective space fibrations.
A $\C P$-tower is introduced in \cite{KuSu14} and \cite{KuSu15} 
as a more generalized notion than a generalized Bott tower. 

The paper is organized as follows.
In Section~\ref{sec_flag_Bott_manifolds}, we give an alternative description of a flag Bott manifold as the orbit space of
the product of general linear groups
under the action of 
the product of their Borel subgroups 
defined in 
\eqref{eq_Bm-action_defining_fBT}; see Proposition~\ref{prop_alter_desc_of_Fm}. 
In doing so, each complex line bundle
appearing in the construction of a flag Bott tower can be described in terms of characters of
maximal tori of general linear groups. Then we can associate
a sequence of integer matrices defined by the weights of the above mentioned characters to a flag Bott manifold as in
Theorem~\ref{prop_all_fBT_is_quotient}.
We also give an explicit description of the tangent bundle of a flag Bott manifold in 
Proposition~\ref{prop_tangent_bdle}, which will be used in the GKM description of a flag Bott manifold in Section~\ref{sec_GKM_desc_of_fBM}.

In Section~\ref{sec_GKM_desc_of_fBM}, we define the canonical torus action on a flag Bott manifold, and
find an explicit description of the tangential representation at a fixed point in 
Proposition~\ref{prop_fBT_is_GKM}. We then see easily that every flag Bott manifold is
a GKM manifold. Moreover an explicit description of the GKM graph of a flag Bott manifold  
is given in Theorem~\ref{thm_GKM_of_Fm}.

In Section~\ref{sec_gBT_and_asso_fBT}, we define the associated flag Bott tower to a given generalized Bott tower. Then
Proposition~\ref{prop:lambda_of_asso_gBT} gives the integer matrices corresponding to the associated
flag Bott tower. 

In Section~\ref{sec_generic_orbit_closure_in_asso_fBM}, we study the relation between a generalized Bott manifold $B_m$ and the closure $X$ of a generic orbit of the associated flag Bott manifold $F_m$. 
This can be accomplished by calculating the fan of $X$ 
in Theorem~\ref{thm_main_thm_1} using the axial functions of the GKM graph of $F_m$. Then we show that  the toric variety $X$ comes from a series of
blow-up of $B_m$ in Theorem~\ref{thm_main_thm_2}.

%%%%%%%%%%%%%%%%%%%%%%%%%%%%%%%%%%%%%%%%%%%%%%%%%%%%%%%%%%%%%%%%%%%%%%%%
\section{Flag Bott manifolds}
\label{sec_flag_Bott_manifolds}
%%%%%%%%%%%%%%%%%%%%%%%%%%%%%%%%%%%%%%%%%%%%%%%%%%%%%%%%%%%%%%%%%%%%%%%%

%%%%%%%%%%%%%%%%%%%%%%%%%%%%%%%%%%%%%%%%%%%%%%%%%%%%%%%%%%%%%%%%%%%%%%%%
\subsection{Definition of flag Bott manifolds}
\label{sec_def_of_fBT}
%%%%%%%%%%%%%%%%%%%%%%%%%%%%%%%%%%%%%%%%%%%%%%%%%%%%%%%%%%%%%%%%%%%%%%%%
Let $M$ be a complex manifold and $E$ an $n$-dimensional 
holomorphic vector bundle over $M$. 
Recall from \cite[p. 282]{BoTu82} that 
the \defi{associated flag bundle} 
$\flag(E) \to M$ is obtained from $E$ by replacing each fiber
$E_p$ by the full flag manifold $\flag(E_p)$.

\begin{definition}\label{def_fBT}
	A \defi{flag Bott tower} $F_{\bullet}= \{ F_j \mid 0 \leq j \leq m\}$ of
	height $m$, or an \defi{$m$-stage flag Bott tower}, is a sequence,
	\[
	\begin{tikzcd}
	F_m \arrow[r, "p_m"] & F_{m-1} \arrow[r, "p_{m-1}"] & \cdots \arrow[r, "p_2"]
	& F_1 \arrow[r, "p_1"] & F_0 = \{\text{a point}\},
	\end{tikzcd}
	\]
	of manifolds $F_j = \flag\left(\bigoplus_{k=1}^{n_j+1} \xii{j}{k} \right)$
	where $\xii{j}{k}$ is a holomorphic 
	line bundle over $F_{j-1}$ for each $1 \leq k \leq n_j+1$
	and $1 \leq j \leq m$.
	We call $F_j$ the \defi{$j$-stage flag Bott manifold} of the
	flag Bott tower $F_{\bullet}$. 
\end{definition}
Here are some examples of flag Bott manifolds.
\begin{example}\label{example_fBT}
	\begin{enumerate}
		\item The flag manifold $\flag(\C^{n+1})=\flag(n+1)$ is 
		a flag Bott tower of height 1.
		In particular, $\flag(2)=\C P^1$
		is a $1$\nobreakdash-stage flag Bott manifold.
		\item The product of flag manifolds 
		$\flag(n_1+1) \times \cdots \times \flag(n_m+1)$
		is a flag Bott manifold of height $m$. 
		\item Recall from \cite{GrKa94} that an \defi{$m$-stage Bott manifold} is 
		a sequence of $\C P^1$\nobreakdash-fibrations such that each stage is 
		the projective bundle of the sum of two line bundles.
		When $n_j = 1$ for $1 \leq j \leq m$, an $m$\nobreakdash-stage flag Bott manifold is an
		$m$\nobreakdash-stage Bott manifold. \hfill \qed
	\end{enumerate}
\end{example}
\begin{definition}
Two flag Bott towers $F_{\bullet}$ and $F_{\bullet}'$
are \defi{isomorphic} if there is a collection of (holomorphic) diffeomorphisms
$\varphi_j \colon F_j \to F_j'$ which commute with the 
maps $p_j \colon F_j \to F_{j-1}$ and $p_j' \colon F_j' \to F_{j-1}'$.
\end{definition}
\begin{remark}
\begin{enumerate}
	\item One can define $F_j$ to be 
	$\flag(E_j)$ for some holomorphic vector bundle $E_j$ over 
	$F_{j-1}$. However, since we want to consider torus actions on $F_m$,
	we assume $E_j$ to be a sum of holomorphic line bundles
	 in Definition~\ref{def_fBT}.
\item 	
Even though we are concentrating on full flag fibrations in this paper,
one can also study other kinds of induced fibrations such as 
partial flag fibrations, isotropic flag fibrations, etc., which
require further works. In~\cite{KKLS}, the authors study such iterated flag fibrations. \hfill \qed
\end{enumerate}
\end{remark}

%%%%%%%%%%%%%%%%%%%%%%%%%%%%%%%%%%%%%%%%%%%%%%%%%%%%%%%%%%%%%%%%%%%%%%%%
\subsection{Orbit space construction of flag Bott manifolds}
\label{subsec_orb_sp_const_fbt}
%%%%%%%%%%%%%%%%%%%%%%%%%%%%%%%%%%%%%%%%%%%%%%%%%%%%%%%%%%%%%%%%%%%%%%%%
In this subsection, we consider an orbit space
construction of a flag Bott tower
in~ Proposition~\ref{prop_alter_desc_of_Fm}
using the complex Lie groups $\GL(n) := \GL(n, \mathbb{C})$
in order to consider the canonical torus action on it (see Subsection~\ref{sec_torus_action_fBT}).

A flag Bott tower of height $1$ is
the flag manifold $\flag(n_1+1)$ which is the orbit space
$\GL(n_1+1)/B_{\GL(n_1+1)}$, where $B_{\GL(n_1+1)}$ is the
set of upper triangular matrices in $\GL(n_1+1)$.
To describe flag Bott manifolds of higher stages, we begin with a matrix $A$
of size $(n+1) \times (n'+1)$ whose row
vectors are $\mathbf{a}_1,\dots,\mathbf{a}_{n+1} \in \Z^{n'+1}$, i.e.,
\[
A = \begin{bmatrix}
\mathbf{a}_1 \\ \mathbf{a}_2 \\ \vdots \\ \mathbf{a}_{n+1} 
\end{bmatrix}
= \begin{bmatrix}
\mathbf{a}_1(1) & \mathbf{a}_1(2) & \cdots & \mathbf{a}_1(n'+1) \\
\mathbf{a}_2(1) & \mathbf{a}_2(2) & \cdots & \mathbf{a}_2(n'+1) \\
\vdots & \vdots & & \vdots \\
\mathbf{a}_{n+1}(1) & \mathbf{a}_{n+1}(2) & \cdots & \mathbf{a}_{n+1}(n'+1)
\end{bmatrix}
\in M_{n+1, n'+1}(\Z),
\]
which encodes a $B_{\GL(n'+1)}$-action on $\GL(n+1)$ as follows.
Let $H_{\GL(n+1)} \subset \GL(n+1)$, respectively $H_{\GL(n'+1)} \subset \GL(n'+1)$, 
be the set of diagonal matrices in $\GL(n+1)$, respectively $\GL(n'+1)$. 
Since the character group $\chi(H_{\GL(n'+1)})$ is isomorphic to $\Z^{n'+1}$, 
the matrix $A$ gives a homomorphism $H_{\GL(n'+1)} \to H_{\GL(n+1)}$ 
defined by
\begin{equation}\label{def_of_homo_defined_by_A}
h \mapsto \text{diag}\left( 
h^{\mathbf{a}_1}, h^{\mathbf{a}_2},\dots,h^{\mathbf{a}_{n+1}}
\right) \in H_{\GL(n+1)}.
\end{equation}
Here, for $h = \diag(h_1,\dots,h_{n'+1}) \in H_{\GL(n'+1)}$ and $\mathbf{a} = (\mathbf{a}(1),\dots,\mathbf{a}(n'+1)) \in \Z^{n'+1}$, $h^{\mathbf{a}}:=  h_1^{\mathbf{a}_1} \cdots h_{n'+1}^{\mathbf{a}(n'+1)}$.
By composing the canonical projection $\Upsilon \colon B_{\GL(n'+1)}
\to H_{\GL(n'+1)}$ with the homomorphism \eqref{def_of_homo_defined_by_A},
we define the homomorphism $\Lambda(A) \colon B_{\GL(n'+1)} \to H_{\GL(n+1)}$
associated to the matrix $A \in M_{n+1, n'+1}(\Z)$:
\begin{equation}\label{eq_def_of_b_to_a}
\Lambda(A)(b) := \text{diag}(\Upsilon(b)^{\mathbf{a}_1},
\Upsilon(b)^{\mathbf{a}_2},\dots,\Upsilon(b)^{\mathbf{a}_{n+1}}) \in H_{\GL(n+1)}
\quad \text{ for } b \in B_{\GL(n'+1)}.
\end{equation}

Now, let $n_1,\dots,n_m \in \mathbb{Z}_{>0}$. Then, for a given sequence of integer matrices
\begin{equation}\label{eq_cA_def}
\cA := (\A{j}{\ell})_{1 \leq \ell < j \leq m}
\in \prod_{1 \leq \ell < j \leq m} M_{n_j+1, n_{\ell}+1}(\Z),
\end{equation}
we define a right action $\PjA{j}$ of $\prod_{\ell=1}^jB_{\GL(n_{\ell}+1)}$ on 
$\prod_{\ell=1}^{j}\GL(n_{\ell}+1)$ by
\begin{equation}
\label{eq_Bm-action_defining_fBT}
\begin{split}
\PjA{j}&((g_1,g_2,\dots,g_j), (b_1,b_2,\dots,b_j)) \\
& := 	\Big(g_1b_1,
\big(\Lam{2}{1}(b_1)\big)^{-1}g_2b_2,
\big(\Lam{3}{1}(b_1)\big)^{-1}\big(\Lam{3}{2}(b_2)\big)^{-1}g_3b_3,\ldots,\\
& \quad \quad \quad
\big(\Lam{j}{1}(b_1)\big)^{-1}\big(\Lam{j}{2}(b_2)\big)^{-1} \cdots \big(\Lam{j}{j-1}(b_{j-1})\big)^{-1}
g_jb_j \Big)
\end{split}
\end{equation}
for $1 \leq j \leq m$, where $\Lam{j}{\ell} := \Lambda(\A{j}{\ell})$ is 
the homomorphism $B_{\GL(n_{\ell}+1)} \to H_{\GL(n_{j}+1)}$
associated to the matrix $\A{j}{\ell}$  as defined in \eqref{eq_def_of_b_to_a} 
for $1 \leq \ell < j \leq m$. 

\begin{example}
	For $n_1 = 2$, $n_2 = 1$, $n_3 =1$, consider the following matrices:
	\[
	\A{2}{1} = \begin{bmatrix}
	c_1 & c_2 & 0 \\ 0 & 0 & 0 
	\end{bmatrix}, \quad
	\A{3}{1} = \begin{bmatrix}
	d_1 & d_2 & 0 \\ 0 & 0 & 0
	\end{bmatrix}, \quad
	\A{3}{2} = \begin{bmatrix}
	f_1 & 0 \\ 0 & 0
	\end{bmatrix}.
	\]
	Then the right action of $B_{\GL(3)} \times B_{\GL(2)} \times B_{\GL(2)}$
	on $\GL(3) \times \GL(2) \times \GL(2)$
	defined in \eqref{eq_Bm-action_defining_fBT} is
	\[
	\reqnomode
	(g_1,g_2,g_3) \cdot (b_1,b_2,b_3) 
	= \left(g_1b_1, \diag\left(b_1^{(-c_1,-c_2,0)},1\right) g_2 b_2,
	\diag\left(b_1^{(-d_1,-d_2,0)} b_2^{(-f_1,0)},1 \right) g_3b_3 \right).
	\tag*{\qed}
	\]	
\end{example}

\begin{lemma}\label{lemma_right_action_of_B_is_free_and_proper}
	The right action $\PjA{j}$ in \eqref{eq_Bm-action_defining_fBT} is free and proper
	for $1\leq j \leq m$.
\end{lemma}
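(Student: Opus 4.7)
The plan is to prove the two properties separately, exploiting the fact that the twisting term $\Lam{i}{1}(b_1)^{-1}\cdots\Lam{i}{i-1}(b_{i-1})^{-1}$ appearing in the $i$-th coordinate of $\Phi_j$ depends only on $b_1,\dots,b_{i-1}$. This triangular structure lets us argue coordinate-by-coordinate.

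For freeness, suppose $\Phi_j((g_1,\dots,g_j),(b_1,\dots,b_j))=(g_1,\dots,g_j)$. The first coordinate gives $g_1b_1=g_1$, hence $b_1=e$. Because $\Lam{i}{1}$ is a group homomorphism, $\Lam{i}{1}(e)$ is the identity of $H_{GL(n_i+1)}$, so the second coordinate collapses to $g_2b_2=g_2$, giving $b_2=e$. A short induction on $\ell$ shows $b_\ell=e$ for every $\ell$, proving the action has trivial stabilizers.

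For properness, I use the standard criterion that an action of a locally compact group on a locally compact Hausdorff space is proper if and only if, for every compact $K$ in the space, the set of group elements $b$ with $K\cdot b\cap K\neq\emptyset$ is relatively compact. Let $K\subseteq GL(n_1+1)\times\cdots\times GL(n_j+1)$ be compact, and consider a sequence $b^{(k)}=(b_1^{(k)},\dots,b_j^{(k)})$ with $g^{(k)}$ and $g^{(k)}\cdot b^{(k)}$ both in $K$. Projecting to the first factor, $g_1^{(k)}$ and $g_1^{(k)}b_1^{(k)}$ both lie in a compact set, so $b_1^{(k)}=(g_1^{(k)})^{-1}(g_1^{(k)}b_1^{(k)})$ lies in a compact subset of $GL(n_1+1)$; since $B_{GL(n_1+1)}$ is closed, a subsequence converges inside it. Inductively, once $b_1^{(k)},\dots,b_{i-1}^{(k)}$ are known to lie in compact subsets (along a subsequence), continuity of each $\Lam{i}{\ell}$ shows that the prefactor $\Lam{i}{1}(b_1^{(k)})^{-1}\cdots\Lam{i}{i-1}(b_{i-1}^{(k)})^{-1}$ stays in a compact subset of $H_{GL(n_i+1)}$, and then the same cancellation argument applied to the $i$-th coordinate confines $b_i^{(k)}$ to a compact subset of $B_{GL(n_i+1)}$. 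After finitely many extractions the whole sequence $b^{(k)}$ is relatively compact, establishing properness.

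The main obstacle, if any, is purely notational: the $i$-th coordinate of $\Phi_j$ looks complicated, and one has to be careful that each $\Lam{i}{\ell}$ is continuous (being a composition of the canonical projection $\Upsilon$ with the algebraic map \eqref{def_of_homo_defined_by_A}) and is a genuine homomorphism, so that inverses and identities behave as expected. Both the freeness argument and the inductive boundedness argument rest only on these two facts together with the triangular dependence of the twists, so no further machinery is needed.
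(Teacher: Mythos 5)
Your proof is correct and follows essentially the same route as the paper: freeness by a coordinate-by-coordinate cancellation using the triangular structure, and properness by the same cascade, using the fact that once $b_1,\dots,b_{i-1}$ are controlled, the twisting prefactor in the $i$-th coordinate is controlled as well and can be cancelled. The only cosmetic difference is that you phrase properness via the compact-set criterion while the paper uses an equivalent sequential criterion citing Lee; both reduce to the identical boundedness-and-cancellation induction.
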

\begin{proof}
	For $g:=(g_1,\dots,g_j) \in \prod_{\ell=1}^j \GL(n_{\ell}+1)$ and
	$(b_1,\dots,b_j) \in \prod_{\ell=1}^jB_{\GL(n_{\ell}+1)}$,
	the equality $g_1 = g_1b_1$ implies that $b_1$ is the identity matrix
	since $g_1$ is invertible. Similarly, the equation 
	$g_2 = \big(\Lam{2}{1}(b_1)\big)^{-1} g_2b_2 = g_2b_2$ gives that $b_2$ is the identity.
	Continuing in this manner, we conclude
	that the isotropy subgroup at $g$ is trivial, this shows that the action
	$\PjA{j}$ is free. 
	
	To prove the properness of the action, it is enough to show that for
	every sequence $(g^r):= (g_1^r,\dots,g_j^r)$ in $\prod_{\ell=1}^j\GL(n_{\ell}+1)$ and $(b^r) := (b_1^r,\dots,b_j^r)$ in 
	$\prod_{\ell=1}^j B_{\GL(n_{\ell}+1)}$ such that
	both $(g^r)$ and $(\PjA{j}(g^r, b^r))$ converge,  a 
	subsequence of $(b^r)$ converges (see~\cite[Proposition 21.5]{Le13}).
	Note that for convergent sequences $(A^r) \to A$ and $(B^r)\to B$ in 
	$\GL(n+1)$, the sequence $(A^rB^r)$ also converges to $AB$ since the multiplication 
	map is continuous. 
	Also for a convergent sequence $(A^r) \to A$ in $\GL(n+1)$, we have that $A_{ij} 
	= \lim_{r\to\infty} (A^r)_{ij}$.
	Since both sequences $(g_1^r)$ and $(g_1^r b_1^r)$ converge, the sequence
	$(b_1^r)$ also converges in $B_{\GL(n_1+1)}$. Similarly, 
	sequences $\Big(\big(\Lam{2}{1}(b_1^r)\big)^{-1}g_2^rb_2^r \Big)$, $(g_2^r)$ and $(b_1^r)$
	converge so that the sequence $(b_2^r)$ also converges. 
	By continuing this process, we show
	that the action $\PjA{j}$ is proper. 
\end{proof}

For a complex manifold $M$ with a free and proper action of a group $G$,
the orbit space $M/G$ is a complex manifold (see~\cite[Proposition 2.1.13]{Huyb05}).
Hence by Lemma~\ref{lemma_right_action_of_B_is_free_and_proper}, the orbit space
\begin{equation}\label{eq_def_of_Fj}
F_j^{\quo}(\cA) := \prod_{\ell=1}^j \GL(n_{\ell}+1)/\PjA{j}
\end{equation}
is a complex manifold, where $\PjA{j}$ is the action defined in \eqref{eq_Bm-action_defining_fBT}.

For the remaining part of this subsection, we will prove that the orbit spaces $F_j^{\quo}(\cA)$ are flag Bott manifolds.
Since $\chi(\prod_{\ell=1}^j H_{\GL(n_{\ell}+1)}) \cong \bigoplus_{\ell=1}^j 
\Z^{n_{\ell}+1}$,
for each integer vector $(\mathbf{a}_1,\dots,\mathbf{a}_j) \in 
\bigoplus_{\ell=1}^j \Z^{n_{\ell}+1}$ we can
define a holomorphic line bundle over $F_j^{\quo}$ 
as follows:
\begin{equation}\label{eq_def_of_xi_a}
\xi(\mathbf{a}_1,\dots,\mathbf{a}_j) := 
\left(\prod_{\ell=1}^j \GL(n_{\ell}+1) \times \C \right)\bigg/
\prod_{\ell=1}^j B_{\GL(n_{\ell}+1)}
\end{equation}
where the right action is 
\[
(g_1,\dots,g_j, v) \cdot (b_1,\dots,b_j)
:= \left(\PjA{j}((g_1,\dots,g_j), (b_1,\dots,b_j)),
b_1^{-\mathbf{a}_1} \cdots b_j^{-\mathbf{a}_j}v \right).
\]

\begin{proposition}\label{prop_alter_desc_of_Fm}	
$F_{\bullet}^{\quo}(\cA):= \{F_{j}^{\quo}(\cA) \mid 0 \leq j \leq m \}$ is a flag Bott tower
	of height $m$.
\end{proposition}

\begin{definition}\label{def_fBT_determined_by_A}
		We say that \defi{a flag Bott tower $F_{\bullet}$ is determined by a sequence of matrices $\cA = (\A{j}{\ell})_{1 \leq \ell < j \leq m} \in \prod_{1 \leq \ell < j \leq m } M_{n_j+1, n_{\ell}+1} (\Z)$}  if $F_{\bullet}$ is isomorphic to $F_{\bullet}^{\quo}(\cA) = \{F^{\quo}_j(\cA) \mid 0 \leq j \leq m\}$ as flag Bott towers.
\end{definition}

Note that, in the next section, we will show that every flag Bott towers can be described as an orbit space, that is, every flag Bott tower is determined by a certain $\cA$ (see Theorem~\ref{prop_all_fBT_is_quotient}). 
\begin{proof}[Proof of Proposition~\ref{prop_alter_desc_of_Fm}]
	By the definition of the action $\PjA{j}$, we have
	the following fibration structure:
	\[
	\GL(n_{j}+1)/B_{\GL(n_j+1)} \hookrightarrow F_j^{\quo} \to F_{j-1}^{\quo}.
	\]
	Since $\GL(n_j+1)/B_{\GL(n_j+1)} \cong \flag(n_j+1)$,
	the manifold $F_j^{\quo}$ is a $\flag(n_j+1)$-fibration over $F_{j-1}^{\quo}$.
	For simplicity, let
	$\xi^{(j)} := \bigoplus_{k=1}^{n_j+1} \xi(\bolda{j}{k}{1},\dots,\bolda{j}{k}{j-1})$	, where $\bolda{j}{k}{\ell}$ is the $k$th row vector of
			the matrix $\A{j}{\ell}$ for $1 \leq \ell \leq j-1$. 
	Consider the map $\varphi_j \colon F_j^{\quo} \to 
	\flag(\xi^{(j)})$ defined by 
	\[
	[g_1,\dots,g_{j-1},g_j] \mapsto ([g_1,\dots,g_{j-1}], V_{\bullet}).
	\]
	Here $V_{\bullet}=
	(V_1 \subsetneq V_2 \subsetneq \cdots \subsetneq V_{n_j}
	\subsetneq (\xi^{(j)})_{[g_1,\dots,g_{j-1}]}) $ is the full flag
	such that the vector space $V_k$ is spanned by the first $k$ columns of $g_j$. 
	We claim that $\varphi_j$ is a holomorphic diffeomorphism. First, we  check that the map $\varphi_j$ is well-defined. 
	We observe that
	\begin{align*}
	[\PjA{j}((g_1,\dots,g_{j-1},g_j), (b_1,\dots,b_{j-1},b_j))] 
	&\mapsto ([\PjA{j-1}((g_1,\dots,g_{j-1}),(b_1,\dots,b_{j-1}))],V_{\bullet}')\\
	&= ([g_1,\dots,g_{j-1}],V_{\bullet}')
	\end{align*}
	for $(b_1,\dots,b_{j-1},b_j) \in \prod_{\ell=1}^{j-1} B_{\GL(n_{\ell}+1)} \times B_{\GL(n_j+1)}$. Here 
	$V_{\bullet}' = (V_1' \subsetneq V_2' \subsetneq 
	\cdots \subsetneq V_{n_j}' \subsetneq (\xi^{(j)})_{[g_1,\dots,g_{j-1}]} )$ 
	is the full flag whose vector space $V_k'$ is spanned by the first $k$ 
	columns of the matrix 
	$\big(\Lam{j}{1}(b_1) \big)^{-1} \cdots \big(\Lam{j}{j-1}(b_{j-1}) \big)^{-1}g_j$. 
	Since we have
	\begin{equation}\label{eq_prod_of_Lj1_to_Ljj-1}
	\big(\Lam{j}{1}(b_1)\big)^{-1} \cdots \big(\Lam{j}{j-1}(b_{j-1})\big)^{-1}\mathbf{v} \sim \mathbf{v}
	\quad \text{ for } \mathbf{v} \in (\xi^{(j)})_{[g_1,\dots,g_{j-1}]},
	\end{equation}
	the map $\varphi_j$ is well-defined. Here the equivalence relation $\sim$ comes from the definition of the bundle $\xi^{(j)}$.

The inverse $\flag(\xi^{(j)}) \to F_j^{\quo}$ of $\varphi_j$ is given by
	\[
	([g_1,\dots,g_j], V_{\bullet}) \mapsto [g_1,\dots,g_{j-1},g_j],
	\]
where $g_j$ is the matrix such that the first $k$ columns span the vector space $V_k$ for $1 \leq k \leq n_j+1$. Note that this map is again well-defined since $[g_1,\dots,g_{j-1},g_j] = [g_1,\dots,g_{j-1},g_jb_j]$ for $b_j \in B_{\GL(n_j+1)}$.  Hence the map $\varphi_j$ is a diffeomorphism, and the result follows since $\varphi_j$ commutes with bundle projection maps.
\end{proof}
\begin{example}
	For $n_1 = 2, n_2 = 1$, and $n_3 = 1$, let
	\[
	\A{2}{1} = \begin{bmatrix}
	c_1 & c_2 & 0 \\ 0 & 0 & 0 
	\end{bmatrix},
	\A{3}{1} = \begin{bmatrix}
	d_1 & d_2 & 0 \\ 0 & 0 & 0
	\end{bmatrix},
	\A{3}{2} = \begin{bmatrix}
	f_1 & 0 \\ 0 & 0
	\end{bmatrix}.
	\]
	Let $\PjA{j}$ be the right action of $\prod_{\ell=1}^j B_{\GL(n_{\ell}+1)}$ on $\prod_{\ell=1}^{j} \GL(n_{\ell}+1)$ defined
	in \eqref{eq_Bm-action_defining_fBT} for $j =1,2,3$. 
	Then, by Proposition~\ref{prop_alter_desc_of_Fm}, the following
	flag Bott tower $F_{\bullet}$ 
	is isomorphic to $F^{\quo}_{\bullet}(\cA)$ as flag Bott towers.
	\[
	\begin{tikzcd}
	& \xi((d_1,d_2,0),(f_1,0)) \oplus \underline{\C}  \dar 
	& \xi(c_1,c_2,0) \oplus \underline{\C} \dar \\
	\flag\left(\xi\left((d_1,d_2,0),(f_1,0)\right) \oplus \underline{\C}\right) \rar
	\arrow[d, equal]
	& \flag\left(\xi\left(c_1,c_2,0\right) \oplus \underline{\C}\right) \rar  
	\arrow[d, equal]
	& \flag(3) \rar \arrow[d, equal] & \{\text{a point}\}\arrow[d, equal] \\ [-3ex]
	F_3 & F_2 & F_1 & F_0
	\end{tikzcd}
	\]
	The line bundle $\xi((d_1,d_2,0),(f_1,0))$ over $F_2$ is 
	$(\GL(3) \times \GL(2) \times \C)/(B_{\GL(3)} \times B_{\GL(2)})$,
	where the right action of $B_{\GL(3)} \times B_{\GL(2)}$ is 
	\[
	\reqnomode
	(g_1,g_2,v) \cdot (b_1,b_2) := \left(\PjA{2}((g_1,g_2), (b_1,b_2)), 
	b_1^{-(d_1,d_2,0)} b_2^{-(f_1,0)} v\right). \tag*{\qed}
	\]
\end{example}

%%%%%%%%%%%%%%%%%%%%%%%%%%%%%%%%%%%%%%%%%%%%%%%%%%%%%%%%%%%%%%%%%%%%%%%%
\subsection{Tautological filtration over a flag Bott manifold}
\label{subsec_taut_filt_over_FBM}
%%%%%%%%%%%%%%%%%%%%%%%%%%%%%%%%%%%%%%%%%%%%%%%%%%%%%%%%%%%%%%%%%%%%%%%%
In this subsection, we prove the following theorem that every flag Bott tower $F_{\bullet}$ can be obtained by the orbit space construction as in Subsection~\ref{subsec_orb_sp_const_fbt}.
\begin{theorem}\label{prop_all_fBT_is_quotient}
	Let $F_{\bullet}$ be a flag Bott tower of height $m$. 
	Then there is a sequence of integer matrices
	$\cA = (\A{j}{\ell})_{1 \leq \ell < j \leq m}
	\in \prod_{1 \leq \ell < j \leq m} M_{n_j+1,n_{\ell}+1}(\Z)$
	such that $F_{\bullet}$ is isomorphic to $F^{\quo}_{\bullet}(\cA):= \{F^{\quo}_j(\cA) \mid 0 \leq j \leq m\}$
	as flag Bott towers.
\end{theorem}
By the above theorem, for any flag Bott tower $F_{\bullet}$ there exists a set $\cA$ satisfying that \textit{$F_{\bullet}$ is determined by~$\cA$} (see~Definition~\ref{def_fBT_determined_by_A}).
To give a proof of Theorem~\ref{prop_all_fBT_is_quotient}, we begin with studying
 holomorphic line bundles over a flag Bott manifold.
For $1 \leq j \leq m$, there is a 
\defi{universal} or \defi{tautological} filtration of subbundles
\begin{equation}\label{eq_tautological_filt_Fj}
0 = U_{j,0} \subset U_{j,1} \subset U_{j,2} \subset
\cdots \subset U_{j,n_j} \subset U_{j,n_j+1} = p_j^{\ast} \xi^{(j)}
\end{equation}
on $F_j = \flag(\xi^{(j)})$,
where we put $\xi^{(j)} := \bigoplus_{k=1}^{n_j+1} \xii{j}{k}$ for simplicity. 
Over a point 
\[
(p,V_{\bullet}) = \left(p,(V_0 \subset V_1 \subset \cdots \subset V_{n_j} \subset (\xi^{(j)})_p ) \right)
\]
of $F_j$ for $p \in F_{j-1}$, 
the fiber of the subbundle $U_{j,k}$ 
is the vector space $V_k$ of the flag $V_{\bullet}$ for 
$1 \leq k \leq n_j+1$. 
Hence we have the quotient line bundle $U_{j,k}/U_{j,k-1}$ over $F_j$ for 
$1 \leq k \leq n_j+1$. The following lemma states that
using these line bundles, we can express any holomorphic line bundle
over a flag Bott manifold.
\begin{lemma}\label{lemma_holo_line_bdle_over_Fj}
	Let $F_{\bullet}$ be a flag Bott tower.
	Then the set of line bundles 
		\[
	\left\{U_{j,k}/U_{j,k-1} \mid 1 \leq k \leq n_j+1 \right\} \cup 
	\bigcup_{\ell=1}^{j-1}
	\left\{p_{j}^* \circ \cdots \circ p_{\ell+1}^* (U_{\ell,k}/U_{\ell,k-1}) \mid 1 \leq k \leq n_{\ell}+1 \right\}
	\]
	generates the Picard group $\Pic(F_j)$ for $1 \leq j \leq m$.
\end{lemma}
\begin{proof}
	Using the result \cite[Remark~21.18]{BoTu82} on the cohomology ring of the induced flag bundle and an induction on the stage of $F_{\bullet}$, one can see that
	the degree 2 cohomology group $H^2(F_j; \Z)$ is generated 
	by the first Chern classes of line bundles
	\[
	\left\{U_{j,k}/U_{j,k-1} \mid 1 \leq k \leq n_j+1 \right\} \cup 
	\bigcup_{\ell=1}^{j-1}
	\left\{p_{j}^* \circ \cdots \circ p_{\ell+1}^* (U_{\ell,k}/U_{\ell,k-1}) \mid 1 \leq k \leq n_{\ell}+1 \right\}
	\]
	for $1 \leq j \leq m$. 
	Therefore, any cohomology class of degree $2$ can be obtained as the first Chern class of a tensor product of these line bundles.
	Hence it is enough to show that the cycle map 
	$c_1 \colon \Pic(F_j) \to H^2(F_j;\Z)$ is an isomorphism. 
	We recall that the cycle map 
	$\Pic(X) \to H^2(X;\Z)$ is an isomorphism
	for a full flag manifold $X$. 
	Also for the full flag bundle $X$ over a smooth variety $Y$,
	if the cycle map for $Y$ is an isomorphism, then 
	the cycle map for $X$ is also an isomorphism (see~\cite[Example 19.1.11]{Fult13}).
	This proves that the cycle map $c_1 \colon \Pic(F_j) \to H^2(F_j;\Z)$
	is an isomorphism for $1 \leq j \leq m$. 
\end{proof}

\begin{lemma}\label{lem_quot_bdle_and_a}
	For a sequence of integer matrices 
	$\cA=(\A{j}{\ell})_{1 \leq \ell < j \leq m}
	\in \prod_{1 \leq \ell < j \leq m} M_{n_j+1, n_{\ell}+1}(\Z)$,
	let $F_{\bullet}^{\quo} := F_{\bullet}^{\quo}(\cA)$ be the flag Bott tower defined 
	as in~\eqref{eq_def_of_Fj}. Then,
	the line bundle $U_{j,k}/U_{j,k-1} \to F_j^{\quo}$ is isomorphic to the bundle $\xi(\mathbf{0},\dots,\mathbf{0},\mathbf{e}_k) \to F_j^{\quo}$ defined in~\eqref{eq_def_of_xi_a}.
\end{lemma}
\begin{proof}
	From Proposition~\ref{prop_alter_desc_of_Fm}
	that the $j$-stage flag Bott manifold $F_j^{\quo}$ is
	the induced flag bundle $\flag(\xi^{(j)})$ over $F_{j-1}^{\quo}$, where
	$\xi^{(j)} = \bigoplus_{k=1}^{n_j+1} \xi(\bolda{j}{k}{1},\dots,\bolda{j}{k}{j-1})$
	and $\bolda{j}{k}{\ell}$ is the $k$th row vector of the matrix
	$\A{j}{\ell}$ for $1 \leq \ell \leq j-1$. 
	Consider a point $g = [g_1,\dots,g_{j}]$ in $F_j^{\quo}$.
	Because of the bundle structure $F_j^{\quo}=\flag(\xi^{(j)}) 
	\stackrel{p_j}{\longrightarrow} F_{j-1}^{\quo}$, 
	this point $g$ can be considered as a full flag
	$V_{\bullet} = \left(V_1 \subsetneq V_2 \subsetneq \cdots \subsetneq
	V_{n_j} \subsetneq (\xi^{(j)})_{p_j(g)} \right)$, where $(\xi^{(j)})_{p_j(g)}$ is the 
	fiber over $p_j(g)$. 
	The fiber of $U_{j,k}$ at $g$ is the vector space $V_{k} \subset (\xi^{(j)})_{p_j(g)}$ spanned by the first $k$ column vectors 
	$\mathbf{v}_1,\dots,\mathbf{v}_k \in (\xi^{(j)})_{p_j(g)}$ 
	of $g_{j} \in \GL(n_j+1)$. Hence
	the fiber of $U_{j,k}/U_{j,k-1}$ at $g$ is $V_k/V_{k-1}$,
	which is spanned by the vector $\mathbf{v}_k \in (\xi^{(j)})_{p_j(g)}$. For an element $b = (b_1,\dots,b_j) \in \prod_{\ell=1}^j B_{\GL(n_j)}$, the $k$th column vector $\mathbf{v}_k'$ of the last coordinate of $\PjA{j}(g,b)$ is given by
	\[
	\mathbf{v}_k' =
	\Big( \big(\Lam{j}{1}(b_1)\big)^{-1} \cdots \big(\Lam{j}{j-1}(b_{j-1})\big)^{-1} 
	\mathbf{v}_k\Big) b_j^{\mathbf{e}_k}
	= b_j^{\mathbf{e}_k}\big(\Lam{j}{1}{(b_1)}\big)^{-1} \cdots \big(\Lam{j}{j-1}(b_{j-1})\big)^{-1} 
	\mathbf{v}_k \sim b_j^{\mathbf{e}_k} \mathbf{v}_k.
	\]
	Here, the equivalence comes from~\eqref{eq_prod_of_Lj1_to_Ljj-1}.
	Hence the result follows.
\end{proof}

Using the above two lemmas, we can prove Theorem~\ref{prop_all_fBT_is_quotient}.
\begin{proof}[Proof of Theorem~\ref{prop_all_fBT_is_quotient}]
	We prove the proposition using the induction argument on the height of 
	a flag Bott tower. When the height is $1$, then it is obvious that
	any full flag manifold can be described as the orbit space
	$\GL(n_1+1)/B_{\GL(n_1+1)}$. 
	
	Assume that the theorem holds for flag Bott towers whose height is less that $m$.
	For a flag Bott tower $F_{\bullet}$ of height $m$, by the induction hypothesis, 
	we have a sequence of integer matrices
	$(\A{j}{\ell})_{1 \leq \ell < j \leq m-1}
	\in \prod_{1 \leq \ell < j \leq m-1} M_{n_j+1, n_{\ell}+1}(\Z)$
	such that $\{F_j \mid 0 \leq j \leq m-1 \}$ is isomorphic to the orbit spaces
	$\{F_j^{\quo} \mid 0 \leq j \leq m-1\}$
	as flag Bott towers.
	To prove the claim, it is enough to find suitable integer matrices 
	$\A{m}{1}, \dots, \A{m}{m-1}$ such that 
	$(\A{j}{\ell})_{1 \leq \ell < j \leq m}$ gives the flag Bott manifold $F_m$.

	Let $F_m= \flag\left(\bigoplus_{k=1}^{n_m+1} \xii{m}{k} \right)$, 
	where $\xii{m}{k}$ is a holomorphic
	line bundle over $F_{m-1}$. 
	Then, by the induction hypothesis, the $(m-1)$-stage flag Bott manifold 
	$F_{m-1}$ can be expressed as the orbit $\prod_{\ell=1}^{m-1} \GL(n_{\ell}+1) /\PjA{m-1}$. 
	Hence, by Lemmas~\ref{lemma_holo_line_bdle_over_Fj} and~\ref{lem_quot_bdle_and_a}, there exists 
	a suitable integer vector $\left(\bolda{m}{k}{1},\dots,\bolda{m}{k}{m-1} \right)
	\in \bigoplus_{\ell=1}^{m-1} \Z^{n_{\ell}+1}$ 
	such that 
	\[
	\xi\left(\bolda{m}{k}{1},\dots,\bolda{m}{k}{m-1} \right)
	=\xii{m}{k} \quad
	\text{ for } 1 \leq k \leq n_m+1. 
	\]
	Consider the integer matrix $\A{m}{\ell} 
	\in M_{n_m+1, n_{\ell}+1}(\Z)$ whose row vectors are 
	$\bolda{m}{1}{\ell},\dots,\bolda{m}{n_m+1}{\ell}$ for
	$1 \leq \ell \leq m-1$.
	Let $F_m^{\quo}$ be the flag Bott manifold determined by 
	integer matrices $(\A{j}{\ell})_{1 \leq \ell < j \leq m}$.
	Then by Proposition~\ref{prop_alter_desc_of_Fm}, we have 
	the following bundle map $\varphi$ which is a holomorphic diffeomorphism:
	\[
	\varphi \colon F_m^{\quo} \to 
	\flag\left(\bigoplus_{k=1}^{n_m+1}
	\xi(\bolda{m}{k}{1},\dots,\bolda{m}{k}{m-1})
	\right) = F_m. \qedhere
	\]
\end{proof}

\begin{remark}[Description of $F_m$ using compact Lie groups]
	\label{rmk_compact_description}
	Using the compact subgroups $\U(n_j+1) \subset \GL(n_j+1)$ and
	the compact maximal torus
	$T^{n_j+1} \subset H_{\GL(n_j+1)}$ for $1\leq j \leq m$, 
	consider the orbit space:
	\[
	\prod_{j=1}^m U(n_j+1) \bigg/ \prod_{j=1}^m T^{n_j+1},
	\]
	where the right action is defined by
	\begin{equation}\label{eq_quotient_torus}
	\begin{split}
	(g_1,\dots,g_m) \cdot (t_1,\dots,t_m)
	& = \Big(g_1t_1, \big(\Lam{2}{1}(t_1)\big)^{-1} g_2t_2, \big(\Lam{3}{1}(t_1)\big)^{-1}
	\big(\Lam{3}{2}(t_2)\big)^{-1} g_3t_3, \dots,\\
	& \quad \quad \quad 
	\big(\Lam{m}{1}(t_1)\big)^{-1} \big(\Lam{m}{2}(t_2)\big)^{-1} \cdots 
	\big(\Lam{m}{m-1}(t_{m-1})\big)^{-1}g_mt_m\Big).
	\end{split}
	\end{equation}
	Then the above manifold is a compact manifold which is
	diffeomorphic to $F_m$
	since $\U(n+1)/T^{n+1}$ is diffeomorphic to 
	$\GL(n+1)/B_{\GL(n+1)}$.
	We will also use this description for $F_m$. \hfill \qed
\end{remark}

\begin{remark}\label{rmk_eq_cohomology_ring_of_fBT}
	Let $F_m$ be the $m$-stage flag Bott manifold defined by 
	a sequence of integer matrices
	$\cA=(\A{j}{\ell})_{1 \leq \ell < j \leq m-1}
		\in \prod_{1 \leq \ell < j \leq m-1} M_{n_j+1, n_{\ell}+1}(\Z)$.
	Every flag Bott manifold is a $\C P$-tower. Hence using Borel--Hirzebruch formula,
	the cohomology ring and the equivariant cohomology ring with respect to the torus action defined in Section~\ref{sec_torus_action_fBT} of $F_m$ can be computed. The explicit formula is given in~\cite{KKLS} in terms of $\cA$.
\end{remark}

%%%%%%%%%%%%%%%%%%%%%%%%%%%%%%%%%%%%%%%%%%%%%%%%%%%%%%%%%%%%%%%%%%%%%%%%
\subsection{Tangent bundle of $F_m$}
%%%%%%%%%%%%%%%%%%%%%%%%%%%%%%%%%%%%%%%%%%%%%%%%%%%%%%%%%%%%%%%%%%%%%%%%
In this subsection, we study the tangent bundle of a flag Bott manifold
using a principal connection of a principal bundle.
For more details, see \cite[Chapter 8, Addendum 3]{Spiv79}. 
For a principal $H$-bundle
$\pi \colon P \to B$, the \defi{vertical subbundle} $\mathcal{V}$ is defined to be $\mathcal{V} := \{ v \in TP \mid \pi_{\ast} v = 0\} \subset TP$.
%there is a natural sequence of bundles
%\[
%0 \to \mathcal{V} \to TP \to \pi^{\ast} TB \to 0,
%\]
%where $\pi^{\ast} TB$ is the pullback of the tangent bundle $TB$
%along $\pi$, and $\mathcal{V}= \{ v \in TP \mid \pi_{\ast} v = 0\}$ is 
%the \defi{vertical subbundle}. 
%Here, we consider the $H$-principal bundle $P \to B$ with the right
%action. 
%Alternatively\red{,} 
If 
we let $o_p \colon H \to H(p)$ be the orbit map
which maps $H$ onto its orbit through $p \in P$, then we have
\begin{equation}\label{eq_vertical_subbdle}
\mathcal{V}_{p}  = (o_p)_{\ast} \text{Lie}(H).
\end{equation}
%Here, \
A \defi{principal connection} $\mathcal H$ is a subbundle of
$TP$ such that for $p \in P$,
\begin{itemize}
	\item $T_p P = \mathcal{V}_p \oplus \mathcal H_p$,
	\item $(\Phi_h)_{\ast} \mathcal H_p 
	= \mathcal H_{\Phi_h(p)}$ where $\Phi_h$ is the right action
	by $h \in H$, and
	\item $\mathcal H_p$ varies smoothly with respect to $p \in P$.
\end{itemize}
Because of the first property of principal connection, we have 
that $\pi_{{\ast}} (\mathcal H_p) = T_{\pi(p)} B$. 

For convenience, let $\mathbb{T}$ denote the product of compact tori 
$\prod_{j=1}^m T^{n_j+1}$. 
By Remark~\ref{rmk_compact_description}, 
an $m$-stage flag Bott manifold $F_m$ can be described as the orbit
of the right action in \eqref{eq_quotient_torus}, i.e., 
\(
F_m = \prod_{j=1}^m \U(n_j+1)/
\mathbb T.
\)
Since $\mathbb T$ acts freely on the space $\prod_{j=1}^m \U(n_j+1)$, we have the principal $\mathbb{T}$-bundle
\begin{equation}\label{eq_principal_bdle_Fm}
\prod_{j=1}^m \U(n_j+1) \stackrel{\pi}{\longrightarrow} F_m.
\end{equation}

We describe the vertical subbundle $\mathcal{V}$ of the above principal
bundle \eqref{eq_principal_bdle_Fm}. 
For $1\leq j \leq m$,
let $\mathfrak{u}(n_j+1)$, respectively  $\mathfrak{t}^{n_j+1}$,
denote the Lie algebra of $\U(n_j+1)$, respectively
$T^{n_j+1} \subset \U(n_j+1)$.
For a point $g = (g_1,\dots,g_m) \in \prod_{j=1}^m \U(n_j+1)$, 
define
\[
(L_{g^{-1}})_{\ast} :=
(L_{g_1^{-1}})_{\ast} \times \cdots 
\times (L_{g_m^{-1}})_{\ast} \colon
T_g\left(\prod_{j=1}^m \U(n_j+1) \right)
\to \bigoplus_{j=1}^m \mathfrak{u}(n_j+1),
\]
where $L_{g_j}$ is the left translation by $g_j$ for $1 \leq j \leq m$. 
Then $(L_{g^{-1}})_{\ast}$ is an isomorphism, so that we have the trivialization:
\[
\prod_{j=1}^m \U(n_j+1)
\times \bigoplus_{j=1}^m \mathfrak{u}(n_j+1) \cong T\left(\prod_{j=1}^m \U(n_j+1) \right).
\]
For the principal bundle \eqref{eq_principal_bdle_Fm}, 
it follows from \eqref{eq_vertical_subbdle}
that %the fiber of the vertical subbundle $\mathcal V$ at a point $g$ is
\(
\mathcal V_g = (o_g)_{\ast} \left( \bigoplus_{j=1}^m \mathfrak{t}^{n_j+1} \right),
\)
where $o_g \colon \mathbb{T} \to \mathbb{T}(g)$ is the orbit map. 
For a given $\underline{t} = (\underline{t}_1,
\dots,\underline{t}_m) \in \bigoplus_{j=1}^m \mathfrak{t}^{n_j+1}$, take a path 
\[
\gamma \colon (-\ep, \ep) \to \prod_{j=1}^m T^{n_j+1}, \quad s \mapsto (t_1(s),\dots,t_m(s))
\]
such that $\gamma(0)=\mathbf{1}$, $t_j(s) \in T^{n_j+1}$ and $\frac{d}{ds} \gamma(s)|_{s = 0}
= \underline{t}$. For a point $g \in \prod_{j=1}^m \U(n_j+1)$ 
and $\mathbf{t} \in \mathbb{T}$, 
let $g \cdot \mathbf{t}$ denote the right action of $\mathbb{T}$
in \eqref{eq_quotient_torus}. Then we have the following:
\begin{align*}
(L_{g^{-1}})_{\ast} (o_g)_{\ast} \underline{t}
&= \frac{d}{ds} L_{g^{-1}}(g \cdot \gamma(s))|_{s=0} \\
&= \frac{d}{ds}
\bigg( t_1(s), g_2^{-1} \big(\Lam{2}{1}(t_1(s)) \big)^{-1} g_2 t_2(s),
\dots,
g_m^{-1} \big(\Lam{m}{1}(t_1(s))\big)^{-1} \cdots \big(\Lam{m}{m-1}(t_{m-1}(s))\big)^{-1}
g_mt_m(s)\bigg) \bigg|_{s=0} \\
&= \bigg( 
\underline{t}_1, \underline{t}_2 - \text{Ad}_{g_2^{-1}}
\left(\A{2}{1}(\underline{t}_1) \right), \dots, \underline{t}_m
-\text{Ad}_{g_m^{-1}} \left(\A{m}{1}(\underline{t}_1) + \cdots +
\A{m}{m-1}(\underline{t}_{m-1} )\right)
\bigg).
\end{align*}
Here $\text{Ad}_{g}(X) = gXg^{-1}$, 
i.e., the usual adjoint representation of $\U(n_j+1)$ on 
$\mathfrak{u}(n_j+1)$.
Therefore we see that the vertical subbundle $\mathcal V$ is the image of the
injective map:
\[
\prod_{j=1}^m \U(n_j+1)  \times
\bigoplus_{j=1}^m \mathfrak{t}^{n_j+1}
 \to \prod_{j=1}^m  \U(n_j+1)\times
\bigoplus_{j=1}^m \mathfrak{u}(n_j+1),
\]
where $\left( (g_1,\dots,g_m), (\underline{t}_1,\dots,
\underline{t}_m) \right)$ maps to 
\[
\left( 
(g_1,\dots,g_m), 
\left(\underline{t}_1, \underline{t}_2 - \text{Ad}_{g_2^{-1}}
\left(\A{2}{1}(\underline{t}_1) \right), \dots,
\underline{t}_m - \text{Ad}(g_m^{-1}) 
\left(\A{m}{1}(\underline{t}_1) +\cdots+ \A{m}{m-1}(\underline{t}_{m-1})\right)
\right)
\right).
\]

Now we describe a principal connection. Let
$\mathfrak{m}_j \subset \mathfrak{u}(n_j+1)$ be the subspace of matrices with
the zeros along the diagonal. Then 
$\mathfrak{m}_j$ is invariant under the adjoint action of 
$T^{n_j+1}$, and $\mathfrak{m}_j \cap \mathfrak{t}^{n_j+1} = \{0\}$.
%. It is known that every compact connected 
%Lie group admits a bi-invariant
%metric (see \cite[Corollary VI.3.7]{Boot86}). 
%With an appropriate choice of metric, 
%$\mathfrak{m}_j$ is the orthogonal complement to 
%$\mathfrak{t}^{n_j+1} \subset \mathfrak{u}(n_j+1)$. 
\begin{proposition}\label{prop_connection_Fm}
At the point $e := (e,\dots,e) \in \prod_{j=1}^m \U(n_j+1)$, choose the horizontal space
$\mathcal{H}_e := \bigoplus_{j=1}^m \mathfrak{m}_j
\subset \bigoplus_{j=1}^m \mathfrak{u}(n_j+1)$.
For a point $g = (g_1,\dots,g_m) \in \prod_{j=1}^m \U(n_j+1)$, define $\mathcal{H}_g \subset T_g(\prod_{j=1}^m \U(n_j+1))$ by 
\[
\mathcal{H}_g := \bigoplus_{j=1}^m (L_{g_j})_{\ast}\mathfrak{m}_j.
\]
Then $\mathcal{H}$ is a connection.
\end{proposition}
\begin{proof}
First we need to show that for each point $g \in \prod_{j=1}^m \U(n_j+1)$, we have that
$\mathcal{H}_g \oplus \mathcal{V}_g = T_g(\prod_{j=1}^m \U(n_j+1))$. We claim that $\mathcal{V}_g \cap 
\mathcal{H}_g = \{{0}\}$. Suppose that 
$(o_g)_{\ast} (\underline{t})$ is contained in 
$\mathcal{H}_g$ for some $\underline{t} 
= (\underline{t}_1,\dots,\underline{t}_m) \in 
\bigoplus_{j=1}^m \mathfrak{t}^{n_j+1}$. This implies
that
\[
\left(\underline{t}_1, \underline{t}_2
- \text{Ad}_{g_2^{-1}}\left(\A{2}{1}(\underline{t}_1)\right), 
\dots, \underline{t}_m - \text{Ad}_{g_m^{-1}}
\left(\A{m}{1}(\underline{t}_1) + \cdots + \A{m}{m-1}(\underline{t}_{m-1})\right)
\right) \in \bigoplus_{j=1}^m \mathfrak{m}_j.
\]
In particular, $\underline{t}_1 \in \mathfrak{m}_1$, but it is also
contained in $\mathfrak{t}^{n_1+1}$. Since $\mathfrak{m}_1 \cap \mathfrak{t}^{n_1+1} 
= \{0\}$, we have that $\underline{t}_1 = 0$. Continuing in this manner 
we conclude that $\mathcal{V}_g \cap \mathcal{H}_g = \{0\}$,
and hence by the dimension reason, we have 
$\mathcal{H}_g \oplus \mathcal{V}_g = T_g(\prod_{j=1}^m \U(n_j+1))$.

Secondly, define the map $\Phi_{\mathbf{t}} \colon 
\prod_{j=1}^m \U(n_j+1)\to \prod_{j=1}^m \U(n_j+1)$ as the right translation by $\mathbf{t}$
as defined in \eqref{eq_quotient_torus}.
For an element $\mathbf{t} = (t_1,\dots,t_m) \in \prod_{j=1}^m T^{n_j+1}$, we claim that $(\Phi_{\mathbf{t}})_{\ast} \mathcal{H}_g
= \mathcal{H}_{\Phi_{\mathbf{t}}(g)}$. 
For any $(x_1,\dots,x_m) \in \prod_{j=1}^m \U(n_j+1)$, we have the following:
\begin{equation}\label{eq_connection}
\begin{split}
(\Phi_{\mathbf{t}} &\circ L_g)(x_1,\dots,x_m) \\
&= \Phi_{\mathbf{t}}(g_1x_1,\dots,g_mx_m) \\
&= \Big(g_1x_1t_1, \big(\Lam{2}{1}(t_1)\big)^{-1}g_2x_2t_2, \dots, 
\big(\Lam{m}{1}(t_1)\big)^{-1} \cdots \big(\Lam{m}{m-1}(t_{m-1})\big)^{-1} g_mx_m t_m\Big) \\
&= \Big(g_1t_1(t_1^{-1}x_1t_1),
\big(\Lam{2}{1}(t_1)\big)^{-1}g_2t_2(t_2^{-1}x_2t_2),\dots, 
\big(\Lam{m}{1}(t_1)\big)^{-1}\cdots \big(\Lam{m}{m-1}(t_{m-1})\big)^{-1} g_mt_m
(t_m^{-1}x_mt_m) \Big)\\
&= L_{\Phi_{\mathbf{t}}(g)}(t_1^{-1}x_1t_1,\dots,t_m^{-1}x_mt_m).
\end{split}
\end{equation}
This gives $(\Phi_{\mathbf{t}})_{\ast} \mathcal{H}_g
= \mathcal{H}_{\Phi_{\mathbf{t}}(g)}$ since 
$\mathfrak{m}_j$ is invariant under the adjoint action of 
$T^{n_j+1}$ for $1 \leq j \leq m$.

Finally since the left multiplication varies smoothly with
$(g_1,\dots,g_m) \in \prod_{j=1}^m \U(n_j+1)$,
this defines a connection. 
\end{proof}

As a corollary of Proposition~\ref{prop_connection_Fm} we have
the following description of the tangent bundle of $F_m$:
\begin{proposition}\label{prop_tangent_bdle}
The tangent bundle of $F_m$ is isomorphic to 
\[
\prod_{j=1}^m \U(n_j+1) \times_{\mathbb T}
\bigoplus_{j=1}^m \mathfrak{m}_j,
\]
where 
the following elements are identified:
\begin{equation}\label{eq_tan_bdle}
(g_1,\dots,g_m; X_1,\dots,X_m) \sim 
\left( (g_1,\dots,g_m) \cdot (t_1,\dots,t_m) ;
\text{Ad}_{t_1^{-1}}X_1,\dots,\text{Ad}_{t_m^{-1}}X_m \right)
\end{equation}
for $(t_1,\dots,t_m) \in \mathbb T$.
Here $(g_1,\dots,g_m) \cdot (t_1,\dots,t_m)$ is as defined in 
\eqref{eq_quotient_torus}.
\end{proposition}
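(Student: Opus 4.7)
The plan is to identify the tangent bundle of $F_m$ with the quotient of the horizontal distribution constructed in Proposition~\ref{prop_connection_Fm}, and then to rewrite that quotient in the associated-bundle form claimed in the statement. Recall the general principle that for any principal $H$-bundle $\pi \colon P \to B$ equipped with a connection $\mathcal H$, the restriction $\pi_\ast \colon \mathcal H_p \to T_{\pi(p)}B$ is a fiberwise isomorphism (first defining property) and the right $H$-action preserves $\mathcal H$ (second defining property), so there is a natural isomorphism of vector bundles $TB \cong \mathcal H / H$. Applied to the principal $\mathbb T$-bundle \eqref{eq_principal_bdle_Fm} with the connection $\mathcal H$ of Proposition~\ref{prop_connection_Fm}, this yields $TF_m \cong \mathcal H / \mathbb T$.

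Next I would trivialize $\mathcal H$ by left translation. Since $\mathcal H_g = ((L_{g_1})_\ast \times \cdots \times (L_{g_m})_\ast)(\mathfrak m_1 \oplus \cdots \oplus \mathfrak m_m)$ by construction, the map
\[
(g; X_1, \dots, X_m) \longmapsto (L_g)_\ast (X_1, \dots, X_m)
\]
is a vector bundle isomorphism from the trivial bundle $(U(n_1+1) \times \cdots \times U(n_m+1)) \times \bigoplus_{j=1}^m \mathfrak m_j$ onto $\mathcal H$. The remaining task is to determine how the right $\mathbb T$-action on $\mathcal H$ pulls back to this trivialization. The necessary computation is already contained in \eqref{eq_connection}: the identity
\[
\Phi_{\mathbf t} \circ L_g = L_{\Phi_{\mathbf t}(g)} \circ C_{\mathbf t^{-1}},
\]
where $C_{\mathbf t^{-1}}$ denotes componentwise conjugation by $\mathbf t^{-1} = (t_1^{-1},\dots,t_m^{-1})$, shows that upon differentiating at the identity the induced fiberwise action on $\bigoplus_j \mathfrak m_j$ is precisely $(X_1, \dots, X_m) \mapsto (\Ad_{t_1^{-1}} X_1, \dots, \Ad_{t_m^{-1}} X_m)$. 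This action is well-defined on $\bigoplus_j \mathfrak m_j$ because each $\mathfrak m_j$ is $T^{n_j+1}$-invariant under the adjoint action, as noted just before Proposition~\ref{prop_connection_Fm}.

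Passing to the $\mathbb T$-quotient of the trivialized horizontal bundle then produces exactly the associated bundle $(U(n_1+1) \times \cdots \times U(n_m+1)) \times_{\mathbb T} \bigoplus_{j=1}^m \mathfrak m_j$ with the equivalence relation \eqref{eq_tan_bdle}. I do not anticipate a substantive obstacle: the only step requiring genuine care is the transport of the right action through the left-invariant trivialization, and the identity $\Phi_{\mathbf t} \circ L_g = L_{\Phi_{\mathbf t}(g)} \circ C_{\mathbf t^{-1}}$ is already displayed in the proof of Proposition~\ref{prop_connection_Fm}.
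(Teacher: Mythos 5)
Your proof is correct and takes essentially the same approach as the paper's: both exploit the connection of Proposition~\ref{prop_connection_Fm}, trivialize $\mathcal H$ by left translation, and transport the $\mathbb T$-action through the identity $\Phi_{\mathbf t}\circ L_g = L_{\Phi_{\mathbf t}(g)}\circ C_{\mathbf t^{-1}}$ from \eqref{eq_connection}. The paper simply packages these two isomorphisms ($TF_m\cong\mathcal H/\mathbb T$ and $\mathcal H/\mathbb T\cong$ the associated bundle) into the single map $\varphi([g;X])=([g],\pi_\ast(L_g)_\ast X)$ and verifies its well-definedness directly, which is the same computation you describe.
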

\begin{proof}
Let $\varphi \colon \prod_{j=1}^m \U(n_j+1) \times_{\mathbb T}
\bigoplus_{j=1}^m \mathfrak{m}_j
\to TF_m$ be the map defined by 
$\varphi([g;X]) = ([g], (\pi_{{\ast}} \circ (L_g)_{\ast}(X)))$.
We claim that the map $\varphi$ is a bundle isomorphism. 
Because of the property of a principal connection and 
by the definition of $\mathcal H$, we have that
$(\pi_{{\ast}} \circ (L_g)_{\ast}(X)) \in T_{[g]} F_m$. 
It is enough to check that the map $\varphi$ is well-defined. 
For $\mathbf{t}=(t_1,\dots,t_m) \in \mathbb T$,
an element $\left[ \Phi_{\mathbf{t}}(g) ; \text{Ad}_{t_1^{-1}}X_1,
\dots, \text{Ad}_{t_m^{-1}}X_m \right]$ maps to 
$\left([\Phi_{\mathbf{t}}(g)], (\pi_{{\ast}} \circ (L_{\Phi_{\mathbf{t}}(g)})_{{\ast}})(
\text{Ad}_{t_1^{-1}}X_1,
\dots, \text{Ad}_{t_m^{-1}}X_m) \right)$.
From \eqref{eq_connection}, we can see that 
\[
(L_{\Phi_{\mathbf{t}}(g)})_{\ast} \left(
\text{Ad}_{t_1^{-1}}X_1,
\dots, \text{Ad}_{t_m^{-1}}X_m \right)
= (\Phi_{\mathbf{t}})_{\ast} \circ (L_g)_{\ast} (X_1,\dots,X_m).
\]
Because $\pi \circ \Phi_{\mathbf{t}} = \pi$, we have that
$\pi_{\ast} \circ (\Phi_{\mathbf{t}})_{\ast} = \pi_{\ast}$. This
implies that the map $\varphi$ is well-defined.
\end{proof}

%%%%%%%%%%%%%%%%%%%%%%%%%%%%%%%%%%%%%%%%%%%%%%%%%%%%%%%%%%%%%%%%%%%%%%%%
\section{GKM descriptions of flag Bott manifolds}
\label{sec_GKM_desc_of_fBM}
%%%%%%%%%%%%%%%%%%%%%%%%%%%%%%%%%%%%%%%%%%%%%%%%%%%%%%%%%%%%%%%%%%%%%%%%
Let $F_m$ be an $m$-stage flag Bott manifold.
In Subsection~\ref{sec_torus_action_fBT}, we define the canonical 
torus action on $F_m$
and by studying this action more carefully, 
we conclude that a flag Bott manifold $F_m$ is a GKM manifold
with the canonical action in Theorem~\ref{thm_Fm_is_GKM}. 

%%%%%%%%%%%%%%%%%%%%%%%%%%%%%%%%%%%%%%%%%%%%%%%%%%%%%%%%%%%%%%%%%%%%%%%
\subsection{Torus actions}
\label{sec_torus_action_fBT}
%%%%%%%%%%%%%%%%%%%%%%%%%%%%%%%%%%%%%%%%%%%%%%%%%%%%%%%%%%%%%%%%%%%%%%%%
Let $F_m$ be an $m$-stage flag Bott manifold. 
For $1 \leq j \leq m$,
let $\mathbb H = \prod_{\ell=1}^m H_{\GL(n_{\ell}+1)}$
act on $F_j$ by 
\[
(h_1,\dots,h_m) \cdot [g_1,\dots,g_j]
:= [h_1g_1,\dots, h_jg_j] 
\]
for $(h_1,\dots,h_m) \in \mathbb H$ and $[g_1,\dots,g_j] \in F_j$.
Then $F_j \to F_{j-1}$ is $\mathbb H$-equivariant fiber bundle.
For notational convenience, we write
\begin{equation}\label{eq_n_nj_sum}
n := n_1 + \cdots + n_m.
\end{equation}
Therefore $\sum_{j=1}^m (n_j+1) = n+m$.
Let $\mathbb T \subset \mathbb H$ be the compact torus of real dimension
$n+m$. 
Note that the torus $\mathbb H$ acts holomorphically but does not 
act effectively on $F_m$. 
If we write  
$h_j = \text{diag}(h_{j,1},\dots,h_{j,n_j+1}) \in \GL(n_j+1)$,
the subtorus
\[
\mathbf H:= \{(h_1,\dots,h_m) \in \mathbb H \mid
h_{1,n_1+1} = \cdots = h_{m,n_m+1} = 1 \} \cong (\Cstar)^{n}
\]
acts effectively on $F_m$. 
Let $\bf T \subset \mathbf H$ denote the compact torus 
of real dimension $n$.
In this paper, we call the action of these tori the 
\defi{canonical }$\mathbb H$ ($\mathbb T$, $\mathbf H$ or $\mathbf T$)-\defi{action} 
on $F_m$. For a space $X$ with a $G$-action, we write $(X,G)$
for this $G$-space $X$ when we need to emphasize the acting group.
\begin{remark}\label{rmk_dim_of_torus_and_fBT}
	The complex dimension of an
	$m$-stage flag Bott manifold $F_m$ is 
	$	\frac{n_1(n_1+1)}{2} + \cdots + \frac{n_m(n_m+1)}{2}$
	while the complex dimension of the torus $\mathbf H$, which acts
	effectively on the manifold $F_m$, is $n=n_1 + \cdots + n_m$.
	They are equal if and only if $n_1 = \cdots = n_m = 1$, 
which is the case when a flag Bott manifold is a Bott manifold (see Example~\ref{example_fBT}(3)).
The highest dimension of a torus which can act on $F_m$ effectively is studied in~\cite{Kuro17}.
\hfill \qed
\end{remark}

\begin{example}\label{exampel_torus_action_flag}
	A 1-stage flag Bott manifold is the flag manifold 
	$\flag(n+1)=\GL(n+1)/B_{\GL(n+1)}$. Then the canonical torus action of $\mathbb H 
	= H_{\GL(n+1)}$ on the flag manifold $\flag(n+1)$
	is the left multiplication. 
 \hfill \qed
\end{example}
It is well known that the fixed point set $\flag(n+1)^{\mathbb H}$
can be identified with the symmetric group $\mathfrak{S}_{n+1}$ (see~\cite[Subsection 10.1]{Fult97}).	
	For a given permutation	$w \in \mathfrak{S}_{n+1}$, 
let $\dw$ denote the column permutation matrix, 
i.e., $\dw$ is an element in $\GL(n+1)$ whose
$(w(k),k)$-entries are 1 for $1 \leq k \leq n+1$, 
and all others are zero. For instance, the permutation $w = (231) \in \mathfrak{S}_3$ corresponds to the matrix
\begin{equation}\label{eq_column_permutation}
\dot{w} = \begin{bmatrix}
0 & 0 & 1 \\ 1 & 0 & 0 \\
0 & 1 & 0
\end{bmatrix} \in \GL(3).
\end{equation}
Here we use the one-line notation, i.e., $w(1)=2,w(2)=3$, and $w(3)=1$.	
Then the fixed point set is 
$\{ [\dw] \in \GL(n+1)/B_{\GL(n+1)} \mid w 
\in \mathfrak{S}_{n+1} \}$.
This property can be extended to the canonical action of $\mathbb H$ on 
$F_m$. 
\begin{proposition}\label{prop_fixed_points}
	Let $F_m$ be an $m$-stage flag Bott manifold with the action of
	$\mathbb H$. Then the fixed point set is identified with 
	the product of symmetric groups $\prod_{j=1}^m \mathfrak{S}_{n_j+1}$. More precisely, for 
	an element $(w_1,\dots,w_m) \in \prod_{j=1}^m \mathfrak{S}_{n_j+1}$, the corresponding fixed point
	in $F_m$ is $[\dw_1,\dots,\dw_m]$, where $\dw_j
	\in \GL(n_j+1)$ is the column
	permutation matrix of $w_j$. \hfill \qed
\end{proposition}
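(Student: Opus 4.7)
The plan is to proceed by induction on the height $m$, exploiting the $\mathbb{H}$-equivariance of the fibration $p_m \colon F_m \to F_{m-1}$ together with the orbit-space description in Proposition~\ref{prop_alter_desc_of_Fm}. The base case $m = 1$ is precisely the classical statement recalled in Example~\ref{exampel_torus_action_flag}.

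For the inductive step, since $p_m$ is $\mathbb{H}$-equivariant, any $\mathbb{H}$-fixed point of $F_m$ projects to an $\mathbb{H}$-fixed point of $F_{m-1}$, which by the induction hypothesis has the form $[\dw_1, \dots, \dw_{m-1}]$ for some $(w_1,\dots,w_{m-1}) \in \mathfrak{S}_{n_1+1} \times \cdots \times \mathfrak{S}_{n_{m-1}+1}$. So I only need to identify the $\mathbb{H}$-fixed points lying in the fiber $p_m^{-1}([\dw_1, \dots, \dw_{m-1}])$ and check that they are precisely the classes $[\dw_1, \dots, \dw_{m-1}, \dw_m]$ with $w_m \in \mathfrak{S}_{n_m+1}$.

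The key calculation uses the explicit action \eqref{eq_Bm-action_defining_fBT}. A point $[\dw_1, \dots, \dw_{m-1}, g_m]$ is fixed by every $(h_1, \dots, h_m) \in \mathbb{H}$ iff for each such tuple there exist $b_j \in B_{GL(n_j+1)}$ satisfying $h_j \dw_j = \Lam{j}{1}(b_1)^{-1} \cdots \Lam{j}{j-1}(b_{j-1})^{-1} \dw_j b_j$ for $j < m$, together with the analogous equation at level~$m$. Solving recursively, the first $m-1$ equations force
\[
b_j = \dw_j^{-1}\bigl(\Lam{j}{1}(b_1)\cdots\Lam{j}{j-1}(b_{j-1}) h_j\bigr)\dw_j,
\]
which is diagonal because conjugation of a diagonal matrix by a permutation matrix permutes its diagonal entries, and hence automatically lies in $B_{GL(n_j+1)}$. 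The remaining equation becomes $g_m^{-1} H_m g_m = b_m \in B_{GL(n_m+1)}$, where $H_m := \Lam{m}{1}(b_1)\cdots \Lam{m}{m-1}(b_{m-1}) h_m \in H_{GL(n_m+1)}$. As $h_m$ varies freely over $H_{GL(n_m+1)}$ with the other $h_\ell$ held fixed, the element $H_m$ sweeps out all of $H_{GL(n_m+1)}$, so the fixed-point condition on $g_m$ reduces to requiring $g_m^{-1} H g_m$ to be upper triangular for every diagonal $H \in H_{GL(n_m+1)}$.

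The final step is the classical fact that an element $g \in GL(n_m+1)$ satisfies this normalizer-type condition iff $[g] = [\dw]$ in $GL(n_m+1)/B_{GL(n_m+1)}$ for some $w \in \mathfrak{S}_{n_m+1}$; this follows from the Bruhat decomposition or directly from the identification of the Weyl group with the normalizer of the diagonal torus modulo itself. Applying this to $g_m$ gives the desired characterization, while the converse (that every $[\dw_1, \dots, \dw_m]$ is $\mathbb{H}$-fixed) is immediate by reading off the $b_j$'s from the formulas above. I expect the main obstacle to be purely bookkeeping: one must track carefully that each $\Lam{j}{\ell}(b_\ell)$ depends only on the diagonal entries of $b_\ell$, so that the recursive solution produces diagonal matrices $b_j$ consistently and $H_m$ really does sweep out all of $H_{GL(n_m+1)}$ as $(h_1,\dots,h_m)$ ranges over $\mathbb{H}$.
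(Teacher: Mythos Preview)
Your argument is correct. Note, however, that the paper does not actually supply a proof of this proposition: the statement ends with \verb|\hfill \qed| and no argument follows. The authors evidently regard it as an immediate extension of the classical case in Example~\ref{exampel_torus_action_flag}, relying on the iterated $\flag(n_j+1)$-bundle structure of $F_m$ and the fact that each $p_j$ is $\mathbb{H}$-equivariant.

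Your write-up makes this implicit reasoning precise. The inductive scheme you outline---project fixed points down along $p_m$, choose representatives $(\dw_1,\dots,\dw_{m-1},g_m)$, solve recursively for the $b_j$ and observe they are diagonal for $j<m$, then reduce the condition on $g_m$ to the classical torus-fixed-point statement for $\flag(n_m+1)$---is exactly the argument one would expect the authors had in mind. The one place to be careful, which you already flag, is that the $\Lam{j}{\ell}$ depend only on the diagonal part of their argument (by definition, via the projection $\Upsilon$), so the recursion really does produce diagonal $b_j$'s, and $H_m$ depends on $h_m$ only through multiplication by a fixed diagonal element, hence ranges over all of $H_{GL(n_m+1)}$ as $h_m$ does. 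With that bookkeeping in place your proof is complete.
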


%%%%%%%%%%%%%%%%%%%%%%%%%%%%%%%%%%%%%%%%%%%%%%%%%%%%%%%%%%%%%%%%%%
\subsection{Tangential representations of flag Bott manifolds}
\label{sec_tang_rep_of_fBT}
%%%%%%%%%%%%%%%%%%%%%%%%%%%%%%%%%%%%%%%%%%%%%%%%%%%%%%%%%%%%%%%%%%
In this subsection, we study the tangential representations
of a flag Bott manifold $F_m$ at the fixed points
corresponding to the (noneffective) canonical action of $\mathbb T$ 
in Proposition~\ref{prop_fBT_is_GKM}.
Recall the definition of GKM manifolds from \cite{GKM98} and \cite{GuZa01}.
\begin{definition}
Let $T$ be the compact torus of dimension $n$, 
$\mathfrak{t}$ its Lie algebra, and $M$ a compact 
manifold of real dimension $2d$ with an effective action of $T$. We say that a pair
$(M,T)$ is a \defi{GKM manifold} if
\begin{enumerate}
	\item the fixed point set $M^T$ is finite,
	\item $M$ possesses a $T$-invariant almost-complex structure, and
	\item for every $p \in M^T$, the weights
	$
	\{ \alpha_{i,p} \in \mathfrak{t}^{\ast} \mid   1 \leq i \leq d\}
	$
	of the isotropy representation $T_p M$ of $T$
	are pairwise linearly independent. 
\end{enumerate}
\end{definition}
By considering the effective canonical action of $\mathbf T$ on $F_m$,
we will see that $(F_m, \mathbf T)$ is a GKM manifold in Theorem~\ref{thm_Fm_is_GKM}.
For this, we first need to 
compute the tangential representations of a flag Bott manifold
$F_m$ at fixed points. From Proposition~\ref{prop_tangent_bdle},
the tangent bundle $T F_m$
of a flag Bott manifold $F_m$ 
is isomorphic to 
\[
\prod_{j=1}^m \U(n_j+1) \times_{\mathbb T}
\bigoplus_{j=1}^m \mathfrak{m}_j,
\]
where $\mathfrak{m}_{j} \subset \mathfrak{u}(n_j+1)$ 
is the subspace of matrices with the zero diagonals for
$1 \leq j \leq m$.
For an element $(w_1,\dots,w_m) \in \prod_{j=1}^m \mathfrak{S}_{n_j+1}$, 
the corresponding fixed point in the flag Bott manifold $F_m$ is 
$\dw:= [\dot{w}_1,
\dots,\dot{w}_m]$. 
To describe the tangential representation $T_{\dw} F_m$ of $\mathbb T$, 
it is enough to find homomorphisms
$f_j \colon \mathbb T \to T^{n_j+1}$ satisfying that for $1 \leq j \leq m$
\[
[t_1\dot{w}_1,\dots,t_m\dot{w}_m ; X_1,\dots,X_m] 
= [\dot{w}_1,\dots,\dot{w}_m;
\Ad_{f_1(t_1,\dots,t_m)} X_1, \Ad_{f_2(t_1,\dots,t_m)} X_2,
\dots, \Ad_{f_m(t_1,\dots,t_m)} X_m ].
\]

Before computing the homomorphisms $f_j$, 
let us recall the adjoint action of $\mathbb T$
on $\mathfrak{m}_j$. Let $E_{(r,s)}$ be an element of $\mathfrak{gl}(n_j+1)$ whose 
$(r,s)$-entry is 1 and all others are zero.
Now we have 
$\mathfrak{m}_j \cong \text{span}_{\mathbb{C}}\{ 
z E_{(r,s)} + (-\overline{z})E_{(s,r)} \mid
z \in \C, 1 \leq s < r \leq n_j+1\}$.
We denote the standard basis of 
$\text{Lie}(\mathbb{T})^{\ast} \cong \mathbb{R}^{\sum_{j=1}^m(n_j+1)}$ by
\begin{equation}\label{eq_Lie_basis}
\{\ep_{1,1}^{\ast},
\dots,\ep_{1,n_1+1}^{\ast},\dots,\ep_{m,1}^{\ast},
\dots,\ep_{m,n_m+1}^{\ast}\}.
\end{equation}
With respect to this basis, 
let $A$ be the integer matrix of size 
$(n_j+1) \times (n+m)$
whose row vectors $\mathbf{c}_{j,1},\dots,\mathbf{c}_{j,n_j+1}$
are weights of the homomorphism $f_j$, so that
for an element $\mathbf{t} \in \mathbb T$,
\begin{equation}\label{eq_def_of_fj}
f_j \colon \mathbf{t} \mapsto \diag\left( 
\mathbf{t}^{\mathbf{c}_{j,1}},\dots,\mathbf{t}^{\mathbf{c}_{j,n_j+1}}
\right).
\end{equation}
Since $\Ad_{f_j(\mathbf{t})}E_{(r,s)}
= \mathbf{t}^{\mathbf{c}_{j,r} - \mathbf{c}_{j,s}} E_{(r,s)}$,
using the weight vectors $\{ \mathbf c_{j,k} \}$, we can describe that
\[
\mathfrak{m}_j \cong \bigoplus_{1 \leq s < r \leq n_j+1}
V(\mathbf c_{j,r} - \mathbf c_{j,s}),
\]
where $V(\mathbf c_{j,r} - \mathbf c_{j,s})$ is the 1-dimensional
$\mathbb{T}$-representation with the weight $\mathbf c_{j,r} - \mathbf c_{j,s} \in \bigoplus_{j=1}^m \Z^{n_j+1}$.
For an integer matrix $A$, we define 
\[
V(A) := \bigoplus_{1 \leq s < r \leq n_j+1}
V(\mathbf c_{j,r} - \mathbf c_{j,s}).
\]
Using this notation, we have the following proposition
whose proof will be given at the end of this subsection.
\begin{proposition}\label{prop_fBT_is_GKM}
Let $F_m$ be the $m$-stage flag Bott manifold determined by 
a set of integer matrices
	$(\A{j}{\ell})_{1 \leq \ell < j \leq m-1}
\in \prod_{1 \leq \ell < j \leq m-1} M_{n_j+1, n_{\ell}+1}(\Z)$.
Consider the (noneffective) canonical $\mathbb T$-action on $F_m$.
	For a fixed point $\dw = [\dw_1,\dots,\dw_m] \in F_m$,
	the tangential $\mathbb T$-representation is
	$T_{\dw} F_m \cong \bigoplus_{j=1}^m \mathfrak{m}_j$, where
	\begin{equation}\label{eq_mj}
	\mathfrak{m}_j \cong V \left(\left[X_1^{(j)}\ \ 
	X^{(j)}_2 \ \ \cdots \ \ X^{(j)}_{j-1} \ \ B_j \ \ O \ \ \cdots \ \ O
	\right] \right).
	\end{equation}
	Here $X^{(j)}_{\ell}$ is the
	matrix of size $(n_j+1) \times (n_{\ell}+1)$ defined by
	\begin{equation}\label{eq_def_of_Xjl}
	X^{(j)}_{\ell} = \sum_{\ell < i_1 < \cdots < i_r < j }
	\left(B_j A^{(j)}_{i_r} \right) \left(B_{i_r} A^{(i_{r})}_{i_{r-1}} \right) \cdots 
	\left(B_{i_1} A^{(i_1)}_{\ell} \right)B_{\ell}
	+ B_j \A{j}{\ell} B_{\ell}
	\quad \text{ for }
	1 \leq \ell < j \leq m,
	\end{equation}
	and $B_j$ is the row permutation matrix corresponding to $w_j$, i.e.,
	$B_j = (\dw_j)^T$.
	Furthermore, the weights of the isotropy representation 
	of $\mathbb T$ on $T_{\dw} F_m$ are pairwise linearly independent.
\end{proposition}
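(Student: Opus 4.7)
The plan is to compute the tangential representation by pulling the left $\mathbb T$-action back along the isomorphism from Proposition~\ref{prop_tangent_bdle}, $TF_m \cong (U(n_1+1) \times \cdots \times U(n_m+1)) \times_{\mathbb T} \bigoplus_{j=1}^m \mathfrak{m}_j$. Since $\dw = [\dw_1,\dots,\dw_m]$ is fixed by $\mathbb T$, for each $\mathbf t = (t_1,\dots,t_m) \in \mathbb T$ there is a unique $(s_1,\dots,s_m) \in \mathbb T$ with
\[
(t_1\dw_1,\dots,t_m\dw_m) = (\dw_1,\dots,\dw_m) \cdot (s_1,\dots,s_m)
\]
under the right action of \eqref{eq_quotient_torus}. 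Solving this system component-by-component gives
\[
s_j \;=\; \dw_j^{-1}\, \Lam{j}{1}(s_1)\, \Lam{j}{2}(s_2) \cdots \Lam{j}{j-1}(s_{j-1})\, t_j\, \dw_j.
\]
Applying the equivalence \eqref{eq_tan_bdle} to a tangent vector $[\dw_1,\dots,\dw_m;X_1,\dots,X_m]$ then shows that $\mathbf t$ acts on the summand $\mathfrak{m}_j$ of $T_{\dw}F_m$ by $\Ad_{s_j}$, so the homomorphism $f_j \colon \mathbb T \to T^{n_j+1}$ of \eqref{eq_def_of_fj} is precisely $\mathbf t \mapsto s_j$.

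Next I would extract the weight matrix $\mathbf C_j$ whose $k$-th row is $\mathbf c_{j,k}$ by induction on $j$. Since conjugation by $\dw_j^{-1}$ permutes diagonal entries by $w_j$, the $k$-th diagonal entry of $s_j$ equals $(t_j)_{w_j(k)} \cdot \prod_{\ell<j} s_\ell^{\bolda{j}{w_j(k)}{\ell}}$. The factor $(t_j)_{w_j(k)}$ contributes the row $\ep^\ast_{j,w_j(k)}$ in the $\Z^{n_j+1}$ block, which assembles to the block matrix having $B_j$ in position $j$ and zeros elsewhere; once $s_\ell$ is written in the form \eqref{eq_def_of_fj} with weight matrix $\mathbf C_\ell$, the factor $s_\ell^{\bolda{j}{w_j(k)}{\ell}}$ contributes the row $\bolda{j}{w_j(k)}{\ell} \cdot \mathbf C_\ell$, assembling to $(B_j\A{j}{\ell}) \mathbf C_\ell$. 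Hence
\[
\mathbf C_j = [O\ \cdots\ O\ B_j\ O\ \cdots\ O] + \sum_{\ell=1}^{j-1} (B_j\A{j}{\ell})\, \mathbf C_\ell,
\]
where $B_j$ sits in the $j$-th block. By the inductive hypothesis $\mathbf C_\ell = [X^{(\ell)}_1\ \cdots\ X^{(\ell)}_{\ell-1}\ B_\ell\ O\ \cdots\ O]$; reading off the $\ell'$-th block of $\mathbf C_j$ for $\ell' < j$ produces the identity
\[
X^{(j)}_{\ell'} = B_j \A{j}{\ell'} B_{\ell'} + \sum_{\ell' < \ell < j} B_j\A{j}{\ell}\, X^{(\ell)}_{\ell'},
\]
which unfolds telescopically into the chain sum over $\ell' < i_1 < \cdots < i_r < j$ in \eqref{eq_def_of_Xjl}. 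The $j$-th block of $\mathbf C_j$ is $B_j$ and all blocks past $j$ vanish, matching \eqref{eq_mj}.

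Finally, using the decomposition $\mathfrak{m}_j \cong \bigoplus_{1\le s<r\le n_j+1} V(\mathbf c_{j,r}-\mathbf c_{j,s})$ stated before the proposition, I would verify linear independence of the weights. The decisive point is that the $\Z^{n_j+1}$-block of $\mathbf c_{j,r}-\mathbf c_{j,s}$ is $\ep^\ast_{j,w_j(r)}-\ep^\ast_{j,w_j(s)}$, while the blocks with index strictly greater than $j$ all vanish. A weight arising from $\mathfrak{m}_j$ and a weight arising from $\mathfrak{m}_{j'}$ with $j<j'$ therefore cannot be proportional, since only the latter has a nonzero $\Z^{n_{j'}+1}$-block. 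Within a single $\mathfrak{m}_j$, linear independence reduces to the observation that differences $\ep^\ast_{j,a}-\ep^\ast_{j,b}$ for distinct unordered pairs $\{a,b\}\subset\{1,\dots,n_j+1\}$ are pairwise non-proportional, together with the fact that $s<r$ and the bijectivity of $w_j$ pin down $(r,s)$ uniquely from $\{w_j(r),w_j(s)\}$. I expect the main bookkeeping obstacle to be tracking how weights propagate through the nested product $\Lam{j}{1}(s_1)\cdots\Lam{j}{j-1}(s_{j-1})$ in order to recover the chain-sum formula \eqref{eq_def_of_Xjl}; the remaining steps are essentially forced once the recursion for $s_j$ is in hand.
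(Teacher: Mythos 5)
Your proposal is correct and takes essentially the same approach as the paper: both solve $\mathbf t\,\dw = \dw\cdot\mathbf s$ to identify $f_j$ with the $j$-th component of $\mathbf s$, absorb the result via the identification~\eqref{eq_tan_bdle}, and assemble the weight matrices $\mathbf C_j$ from the recursion $\mathbf C_j = [\,O\ \cdots\ B_j\ \cdots\ O\,] + \sum_{\ell<j}(B_j\A{j}{\ell})\mathbf C_\ell$, which telescopes into~\eqref{eq_def_of_Xjl}. The only distinction is organizational --- you solve the system up front while the paper peels off one coordinate at a time --- and your block-support argument for pairwise independence of the weights matches the paper's.
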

By considering the effective canonical action of $\bf T$ on $F_m$, 
the fixed point set is finite because of Proposition~\ref{prop_fixed_points}.
Also the canonical action of $\mathbf T$ on $F_m$ is holomorphic  
(see~Subsection~\ref{sec_torus_action_fBT}).
As a corollary of Proposition~\ref{prop_fBT_is_GKM}, 
we have the following theorem. 
\begin{theorem}\label{thm_Fm_is_GKM}
	Let $F_m$ be an $m$-stage flag Bott manifold 
	with the effective canonical action of $\bf T$. 
	Then $(F_m, \bf T)$ is a GKM manifold. 
\end{theorem}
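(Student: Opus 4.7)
The plan is to derive the three defining axioms of a GKM manifold for $(F_m, \mathbf{T})$ directly from Proposition~\ref{prop_fixed_points} and Proposition~\ref{prop_fBT_is_GKM}, reducing the question for the effective $\mathbf{T}$-action to the (noneffective) $\mathbb{T}$-action that has already been analyzed. Conditions (1) and (2) are nearly automatic from the constructions in Section~\ref{sec_flag_Bott_manifolds} and Subsection~\ref{sec_torus_action_fBT}; the main content of the theorem is condition (3), which requires passing from $\mathbb{T}$-weights to $\mathbf{T}$-weights along $\mathbf{T}\hookrightarrow\mathbb{T}$.

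First I would record that $F_m$ is a complex manifold by Proposition~\ref{prop_alter_desc_of_Fm}, and that the canonical $\mathbf{H}$-action defined in Subsection~\ref{sec_torus_action_fBT} is holomorphic, so the complex structure on $F_m$ is $\mathbf{H}$-invariant and in particular $\mathbf{T}$-invariant; this supplies the $\mathbf{T}$-invariant almost-complex structure required in the definition of a GKM manifold.

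Next, for the finiteness of $F_m^{\mathbf{T}}$, I would show that $F_m^{\mathbf{T}}=F_m^{\mathbb{T}}$. Let $K\subset\mathbb{T}$ be the (real) kernel of the $\mathbb{T}$-action on $F_m$. By Remark~\ref{rmk_dim_of_torus_and_fBT}, the effective torus acting on $F_m$ has dimension $\sum_{j=1}^m n_j=\dim\mathbf{T}$, so $\dim(\mathbb{T}/K)=\dim\mathbf{T}$. The composition $\mathbf{T}\hookrightarrow\mathbb{T}\twoheadrightarrow\mathbb{T}/K$ is a homomorphism between tori of the same dimension whose image acts effectively on $F_m$; since $\mathbf{T}$ was defined precisely so as to act effectively, this composition must be a surjection with finite kernel. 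Hence $\mathbb{T}=K\cdot\mathbf{T}$, and any point fixed by $\mathbf{T}$ is fixed by all of $\mathbb{T}$. By Proposition~\ref{prop_fixed_points} this set is in bijection with $\mathfrak{S}_{n_1+1}\times\cdots\times\mathfrak{S}_{n_m+1}$ and hence finite.

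Finally, for pairwise linear independence of the isotropy weights at a fixed point $\dw\in F_m^{\mathbf{T}}$, I would pull back the weights of Proposition~\ref{prop_fBT_is_GKM} along the inclusion of Lie algebras $\mathbf{t}\hookrightarrow\mathfrak{t}:=\Lie(\mathbb{T})$. Because the $\mathbb{T}$-action factors through $\mathbb{T}/K$, every $\mathbb{T}$-weight on $T_{\dw}F_m$ vanishes on $\Lie(K)$ and hence lies in $\Lie(K)^{\perp}\subset\mathfrak{t}^{\ast}$. The surjection $\mathbf{T}\twoheadrightarrow\mathbb{T}/K$ from the previous step identifies $\Lie(K)^{\perp}$ with $\mathbf{t}^{\ast}$ via restriction, so this restriction map sends the $\mathbb{T}$-weight set bijectively onto the $\mathbf{T}$-weight set. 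The pairwise linear independence of the $\mathbb{T}$-weights asserted at the end of Proposition~\ref{prop_fBT_is_GKM} therefore transfers verbatim to the $\mathbf{T}$-weights, completing verification of axiom (3). I expect the only nontrivial step to be the bookkeeping of step two that identifies $\mathbf{T}$ with the effective quotient $\mathbb{T}/K$ up to a finite group; once that is in hand, everything else follows from results already in the paper.
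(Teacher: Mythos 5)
Your proof is correct and follows the same route as the paper, which deduces the theorem directly from Propositions~\ref{prop_fixed_points} and \ref{prop_fBT_is_GKM}. In fact you have supplied the step the paper treats as immediate (and codifies only later, as the substitution~\eqref{eq_effective_basis}): that passing from the noneffective $\mathbb{T}$-action to the effective $\mathbf{T}$-action preserves both the finiteness of the fixed-point set and the pairwise linear independence of the isotropy weights, which you justify cleanly via the splitting $\mathfrak{t}=\mathbf{t}\oplus\Lie(K)$ and the resulting isomorphism $\Lie(K)^{\perp}\xrightarrow{\;\sim\;}\mathbf{t}^{\ast}$ given by restriction.
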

\begin{example}\label{example_GKM_F1}
	Suppose that the flag Bott manifold $F_1$ is $\flag(3)$. 
	With the canonical action of the torus $\mathbb T = (S^1)^{3}$, 
	there are six fixed points $\{ [\dw] \mid w \in \mathfrak{S}_3\}$. 
	Let $\{\ep_1^{\ast},\ep_2^{\ast},\ep_3^{\ast}\}$ be the standard basis of $\Lie((S^1)^3)^{\ast} \cong \mathbb{R}^3$. 
	Consider an element $\dw$ in $\GL(3)$ corresponding to the permutation
	$w = (2 3 1) \in \mathfrak{S}_3$. 
	Then the row permutation matrix $B$ is 
	\[
	\begin{bmatrix}
	0 & 1 & 0 \\
	0 & 0 & 1 \\
	1 & 0 & 0
	\end{bmatrix},
	\]
	which is the transpose of the column permutation matrix $\dot{w}$ in~\eqref{eq_column_permutation}.
	Then we have the following tangential representation:
	\[
	\reqnomode
	T_{[\dw]}F_1 =	\mathfrak{m}_1 \cong	V(B)
	= V(\ep_3^{\ast} - \ep_2^{\ast}) \oplus V(\ep_1^{\ast} - \ep_3^{\ast}) \oplus
	V(\ep_1^{\ast} - \ep_2^{\ast}).	\tag*{\qed}
	\]
\end{example}
\begin{example}\label{example_GKM_F2_2}
	Consider a flag Bott tower $F_2$ of height 2 defined by 
	the integer matrix $\A{2}{1} = \begin{bmatrix}
	c_1 & c_2 & 0 \\ 0 & 0 & 0
	\end{bmatrix}$. Then $F_2$ is a 
	$\C P^1$-bundle over $\flag(3)$. 
	The manifold $F_2$ has the action of $(S^1)^3 \times (S^1)^2$,
	and there are 12 fixed points $\left\{[\dw_1,\dw_2] \mid w_1 \in \mathfrak{S}_3, w_2 \in \mathfrak{S}_2\right\}$. Let $\{\ep_{1,1}^{\ast},\ep_{1,2}^{\ast},\ep_{1,3}^{\ast},\ep_{2,1}^{\ast},
		\ep_{2,2}^{\ast}\}$ be the standard basis of 
	$\Lie ((S^1)^3 \times (S^1)^2)^{\ast} \cong \mathbb{R}^3 \oplus
	\mathbb{R}^2$.
	Consider the point $\dw = [\dw_1,\dw_2]$ where
	$w_1 = e$ and $w_2 = (2 1)$. Then the corresponding row
	permutation matrices are
	\[
	B_1 = I_3 = \begin{bmatrix}
	1 & 0 & 0 \\ 0 & 1 & 0 \\ 0 & 0 & 1
	\end{bmatrix}, \quad
	B_2 = \begin{bmatrix}
	0 & 1 \\ 1 & 0
	\end{bmatrix}.
	\]
	Hence the matrix $\X{2}{1}$ is 
	\[
	\X{2}{1} = B_2 \A{2}{1} B_1 = 
	\begin{bmatrix}
	0 & 1 \\ 1 & 0
	\end{bmatrix}
	\begin{bmatrix}
	c_1 & c_2 & 0 \\
	0 & 0 & 0
	\end{bmatrix}
	=\begin{bmatrix}
	0 & 0 & 0 \\ c_1 & c_2 & 0
	\end{bmatrix}.
	\]
	The tangential representation at the point $\dw$ can be computed
	as follows:
{{	\reqnomode
	\begin{align*}
	T_{\dw} F_2 = \mathfrak{m}_1 \oplus \mathfrak{m}_2
	& \cong V([I_3 \ O ]) \oplus V\left([\X{2}{1} \ B_2 ] \right) \\
	& = V \left( 
	\begin{bmatrix}
	1 & 0 & 0 & 0 & 0 \\
	0 & 1 & 0 & 0 & 0 \\
	0 & 0 & 1 & 0 & 0
	\end{bmatrix}
	\right)
	\oplus
	V \left( 
	\begin{bmatrix}
	0 & 0 & 0 & 0 & 1 \\
	c_1 & c_2 & 0 & 1 & 0
	\end{bmatrix}
	\right) \\
	&
	= V(\ep_{1,2}^{\ast} - \ep_{1,1}^{\ast}) \oplus V(\ep_{1,3}^{\ast} - \ep_{1,2}^{\ast})
	\oplus V(\ep_{1,3}^{\ast} - \ep_{1,1}^{\ast}) 
\oplus 
	V(c_1 \ep_{1,1}^{\ast} + c_2 \ep_{1,2}^{\ast} + \ep_{2,1}^{\ast} - \ep_{2,2}^{\ast}).  \tag*{\qed}
	\end{align*}}}
\end{example}

\begin{example}\label{example_GKM_F3}
Consider a flag Bott tower of height 3 with $n_1 = 2$,
$n_2 = 1$, and $n_3 = 1$ which is defined by
\[
\A{2}{1} = \begin{bmatrix}
1 & 2 & 0 \\ 0 & 0 & 0
\end{bmatrix}, \quad
\A{3}{1} = \begin{bmatrix}
3 & 4 & 0 \\ 0 & 0 & 0
\end{bmatrix}, \quad
\A{3}{2} = \begin{bmatrix}
5 & 0 \\ 0 & 0
\end{bmatrix}.
\]
Then the flag Bott manifold $F_3$ has the action of 
$(S^1)^3 \times (S^1)^2 \times (S^1)^2$, and the set of 
fixed points is $\{ [\dw_1,\dw_2,\dw_3] \mid
w_1 \in \mathfrak{S}_3, w_2,w_3 \in \mathfrak{S}_2\}$. 
Denote the standard basis of $\Lie((S^1)^3 \times (S^1)^2 
\times (S^1)^2) \cong \mathbb{R}^3 \oplus \mathbb{R}^2 \oplus
\mathbb{R}^2$ by $\{\ep_{1,1}^{\ast}, \ep_{1,2}^{\ast}, \ep_{1,3}^{\ast}, 
\ep_{2,1}^{\ast}, \ep_{2,2}^{\ast}, \ep_{3,1}^{\ast}, \ep_{3,2}^{\ast}\}$. 
Consider the fixed point $\dw = [\dw_1, \dw_2, \dw_3 ]$ where
$w_1 = (3 1 2)$, $w_2 = e$, and $w_3 = (2 1)$. The corresponding
row permutation matrices are
\[
B_1 = \begin{bmatrix}
0 & 0 & 1 \\
1 & 0 & 0 \\
0 & 1 & 0 
\end{bmatrix}, \quad
B_2 = \begin{bmatrix}
1 & 0 \\ 0 & 1
\end{bmatrix}, \quad
B_3 = \begin{bmatrix}
0 & 1  \\ 1 & 0
\end{bmatrix}.
\]
We have the following computations of 
$\X{2}{1}, \X{3}{1}, \X{3}{2}$:
\begin{align*}
\X{2}{1} &= B_2 \A{2}{1} B_1
= \begin{bmatrix}
2 & 0 & 1\\ 0 & 0 & 0
\end{bmatrix}, \\
\X{3}{1} &= B_3\A{3}{2}B_2\A{2}{1}B_1 + B_3\A{3}{1}B_1 
= \begin{bmatrix}
0 & 0 & 0 \\ 14 & 0 & 8
\end{bmatrix}, \\
\X{3}{2} &= B_3 \A{3}{2} B_2 
= \begin{bmatrix}
0 & 0 \\ 5 & 0
\end{bmatrix}.
\end{align*}
The tangential representation at the point $\dw$
can be computed as follows:
{{\reqnomode
\begin{align*}
T_{\dw} F_3 &= \mathfrak{m}_1 \oplus \mathfrak{m}_2  \oplus \mathfrak{m}_3\\
&\cong V\left([B_1 \ O \ O] \right) 
\oplus V \left([\X{2}{1} \ B_2 \ O] \right) \oplus
V \left([\X{3}{1} \ \X{3}{2} \ B_3] \right) \\
& = 
V \left( 
\begin{bmatrix}
0 & 0 & 1 & 0 & 0 & 0 & 0 \\ 1 & 0 & 0 & 0 & 0 & 0 & 0 \\
0 & 1 & 0 & 0 & 0 & 0 & 0
\end{bmatrix}
\right)
\oplus V \left( 
\begin{bmatrix}
2 & 0 & 1 & 1 & 0 & 0 & 0 \\
0 & 0 & 0 & 0 & 1 & 0 & 0
\end{bmatrix}
\right) \oplus 
V \left(
\begin{bmatrix}
0 & 0 & 0 & 0 & 0 & 0 & 1 \\
14 & 0 & 8 & 5 & 0 & 1 & 0
\end{bmatrix} 
\right) \\
&= V(\ep_{1,1}^{\ast} - \ep_{1,3}^{\ast}) \oplus V(\ep_{1,2}^{\ast}-\ep_{1,3}^{\ast})
\oplus V(\ep_{1,2}^{\ast} - \ep_{1,1}^{\ast}) \\
& \quad \quad \oplus
V(-2\ep_{1,1}^{\ast}-\ep_{1,3}^{\ast} -\ep_{2,1}^{\ast} + \ep_{2,2}^{\ast})
\oplus V(14\ep_{1,1}^{\ast} + 8\ep_{1,3}^{\ast} + 5 \ep_{2,1}^{\ast} + \ep_{3,1}^{\ast} 
- \ep_{3,2}^{\ast}). \tag*{\qed}
\end{align*}}}
\end{example}
Before presenting the proof of Proposition~\ref{prop_fBT_is_GKM}, we give a lemma which is directly induced by the definition of $\X{j}{\ell}$ in~\eqref{eq_def_of_Xjl}.
\begin{lemma}\label{lemma_Xjl_summand}
	The matrix $\X{j}{\ell}$ satisfies the following equality.
		\[
	X^{(j)}_{\ell} 
	= B_j A^{(j)}_{j-1} X^{(j-1)}_{\ell} 
	+ B_{j} A^{(j)}_{j-2} X^{(j-2)}_{\ell} 
	+ \cdots 
	+ B_{j} A^{(j)}_{\ell+1} X^{(\ell+1)}_{\ell} 
	+ B_{j} A^{(j)}_{\ell} B_{\ell}.
	\]
\end{lemma}

\begin{proof}[Proof of Proposition~\ref{prop_fBT_is_GKM}]
We first note that for any $t_j =
\text{diag}(t_{j,1},\dots,t_{j,n_j+1}) \in T^{n_j+1}
\subset \U(n_j+1)$, we have that
$\dot{w}_j^{-1} t_j \dot{w}_j 
= \text{diag}(t_{j,w_j(1)}, t_{j,w_j(2)},\dots,t_{j,w_j(n_j+1)})
\in T^{n_j+1}$.
Let $\w_j$ denote a homomorphism $T^{n_j+1} \to T^{n_j+1}$ 
define by $\w_j(t_j):=\dw_j^{-1} t_j\dw_j$.
Then we have that 
\begin{equation}\label{eq_tilde_w}
t_j \dw_j= \dw_j \dw_j^{-1} t_j \dw_j
= \dw_j \w_j(t_j).
\end{equation}
For the row permutation matrix $B_j = (\dw)^T$, we have that
$B_j (t_{j,1},\dots,t_{j,n_j+1})^T
= (t_{j,w_j(1)},\dots,t_{j,w_j(n_j+1)})^T$.
Hence $B_j$ is the matrix for the homomorphism $\w_j\colon T^{n_j+1} \to T^{n_j+1}$.

	Consider the case when $j = 1$. 	
	Then we can get 
	\begin{equation}\label{eq_find_fj_stage_1}
	\begin{split}
	&[t_1\dw_1,\dots,t_m \dw_m ;X_1,\dots,X_m]
	= [\dw_1 \w_1(t_1),t_2\dw_2,\dots,t_m \dw_m;
	X_1,\dots,X_m] \quad (\text{by } \eqref{eq_tilde_w})\\
	&\quad 
	= [(\dw_1, \Lam{2}{1}(\w_1(t_1)) t_2 \dw_2,
	\dots, \Lam{m}{1}(\w_1(t_1))t_m \dw_m)
	\cdot(\w_1(t_1),1,\dots,1); 
	 X_1,\dots,X_m] 
	\quad (\text{by } \eqref{eq_quotient_torus})\\
	&\quad = [\dw_1, (\Lambda^{(2)}_1 \circ \w_1)(t_1)
	t_2\dw_2,\dots,
	(\Lambda^{(m)}_1 \circ \w_1)(t_1)
	t_m \dw_m; 
	\Ad_{\w_1 (t_1)}X_1, X_2, \dots, X_m] 
	\quad (\text{by } \eqref{eq_tan_bdle}).
	\end{split}
	\end{equation}
	Therefore the homomorphism
$	f_1\colon \mathbb T \to T^{n_1+1}$ in \eqref{eq_def_of_fj} is given by 
$	(t_1,\dots,t_m) \mapsto \w_1(t_1)$,
	and
\[
	\mathfrak{m}_1 \cong
	V\left([B_1 \ \ O \ \ \cdots \ \ O] \right).
\]
	Hence the proposition holds for $j = 1$.
	
We continue the similar computation to \eqref{eq_find_fj_stage_1} for
the second coordinate as follows.
 For $\mathbf{t} = (t_1,\dots,t_m) \in \mathbb T$,
\begin{align*}
&[t_1\dw_1,t_2\dw_2, t_3\dw_3, \dots,t_m \dw_m ;X_1,\dots,X_m] \\
	&\quad = [\dw_1, (\Lambda^{(2)}_1 \circ \w_1)(t_1)
t_2\dw_2,(\Lam{3}{1} \circ \w_1)(t_1)
t_3 \dw_3,\dots,
(\Lambda^{(m)}_1 \circ \w_1)(t_1)
t_m \dw_m; 
\Ad_{\w_1 (t_1)}X_1, X_2, \dots, X_m] \quad
(\text{by } \eqref{eq_find_fj_stage_1})\\
&\quad = 
[\dw_1, \Lam{2}{1}(f_1(\mathbf{t})) t_2 \dw_2,
\Lam{3}{1}(f_1(\mathbf{t})) t_3 \dw_3,
\dots, \Lam{m}{1}(f_1(\mathbf{t})) t_m \dw_m;  
\text{Ad}_{f_1(\mathbf{t})} X_1,X_2,\dots,X_m] \\
&\qquad\quad (\text{by substituting } \w_1(t_1)=f_1(\mathbf{t}))\\
& \quad =
[\dw_1, \dw_2 \w_2(\Lam{2}{1}(f_1(\mathbf{t})) t_2), 
\Lam{3}{1}(f_1(\mathbf{t})) t_3 \dw_3,\dots, \Lam{m}{1}(f_1(\mathbf{t})) t_m \dw_m; 
\text{Ad}_{f_1(\mathbf{t})} X_1,X_2,\dots,X_m]
\quad (\text{by } \eqref{eq_tilde_w}) \\
& \quad = 
[\dw_1, \dw_2 f_2(\mathbf{t}),\Lam{3}{1}(f_1(\mathbf{t})) t_3 \dw_3,\dots, \Lam{m}{1}(f_1(\mathbf{t})) t_m \dw_m; 
\text{Ad}_{f_1(\mathbf{t})} X_1,X_2,\dots,X_m] \\
& \qquad\quad (\text{by letting } f_2(\mathbf{t}) = 
\w_2(\Lam{2}{1}(f_1(\mathbf{t}))t_2)) \\
&\quad = 
[\dw_1, \dw_2, \Lam{3}{2}(f_2(\mathbf{t})) \Lam{3}{1}(f_1(\mathbf{t})) t_3 \dw_3,
\dots, \Lam{m}{2}(f_2(\mathbf{t})) \Lam{m}{1}(f_1(\mathbf{t})) t_m \dw_m; 
\text{Ad}_{f_1(\mathbf{t})}X_1, \text{Ad}_{f_2(\mathbf{t})} X_2, X_3, \dots,X_m ] \\
& \qquad \quad (\text{by } \eqref{eq_tan_bdle}).
\end{align*}
Continuing this process, 
we may assume that $f_1,\dots,f_{j-1}$ can be defined so that
the following is satisfies for $j>1$:
	\begin{align*}
	&[t_1\dw_1,\dots,t_j\dw_j,\ldots; X_1,\dots,X_j, \ldots] \\	
	&\quad = [\dw_1,\dots,\dw_{j-1},
	\Lam{j}{j-1}(f_{j-1}(\mathbf{t})) \Lam{j}{j-2}(f_{j-2}(\mathbf{t})) \cdots
	\Lam{j}{1}(f_{1}(\mathbf{t})) t_j \dw_j, \ldots;
	\Ad_{f_1(\mathbf{t})}X_1,\dots,\Ad_{f_{j-1}(\mathbf{t})}X_{j-1},X_j,\ldots].
	\end{align*}
	We now define $f_j$. 
	By considering $\Lam{j}{j-1}(f_{j-1}(\mathbf{t})) \Lam{j}{j-2}(f_{j-2}(\mathbf{t})) \cdots
	\Lam{j}{1}(f_{1}(\mathbf{t})) t_j \dw_j$, we get the following:
	\begin{align*}
	&\Lam{j}{j-1}(f_{j-1}(\mathbf{t})) \Lam{j}{j-2}(f_{j-2}(\mathbf{t})) \cdots
	\Lam{j}{1}(f_{1}(\mathbf{t})) t_j \dw_j \\
	& \quad \quad = \dw_j 
	\w_j\left(\Lam{j}{j-1}(f_{j-1}(\mathbf{t})) \Lam{j}{j-2}(f_{j-2}(\mathbf{t})) \cdots
	\Lam{j}{1}(f_{1}(\mathbf{t})) t_j\right) \quad (\text{by } \eqref{eq_tilde_w})\\
	& \quad \quad = \dw_j 
	\left( \w_j \circ \Lam{j}{j-1} \circ f_{j-1} (\mathbf{t}) \right) 
	\left( \w_j \circ \Lam{j}{j-2} \circ f_{j-2} (\mathbf{t}) \right) 
	\cdots 
	\left( \w_j \circ \Lam{j}{1} \circ f_{1} (\mathbf{t}) \right) 
	(\w_j(t_j)).
	\end{align*}
	Therefore one can deduce that the map $f_j \colon \mathbb T 
	\to T^{n_j+1}$ is given by
	\[
	\mathbf{t} = (t_1,\dots,t_m) \mapsto 
	\left( \w_j \circ \Lam{j}{j-1} \circ f_{j-1} (\mathbf{t}) \right) 
	\left( \w_j \circ \Lam{j}{j-2} \circ f_{j-2} (\mathbf{t}) \right) 
	\cdots 
	\left( \w_j \circ \Lam{j}{1} \circ f_{1} (\mathbf{t}) \right) 
	(\w_j(t_j)).
	\]
	By considering the exponents of
	the map 
	$\w_j \circ \Lam{j}{\ell} \circ f_{\ell} \colon \mathbb T 
	\to T^{n_j+1}$ for $\ell = 1,\dots,j-1$,
	we get the
	following matrix of size $(n_j+1) \times ((n_1+1) + \cdots + (n_m+1))$:
	\[
	\begin{split}
	&\underbrace{B_j}_{(n_j+1) \times (n_j+1)} \cdot 
	\underbrace{A^{(j)}_{\ell}}_{(n_j+1) \times (n_{\ell}+1)} \cdot
	\underbrace{\left[X^{(\ell)}_1 \ \ X^{(\ell)}_2 \ \ \cdots \ \ X^{(\ell)}_{\ell-1} \ \
		B_{\ell} \ \ O \ \ \cdots \ \ O \right]}_{(n_{\ell}+1) \times ((n_1+1) + 
		\cdots + (n_m+1))} \\
	& \quad \quad = 
	\left[
	B_j A^{(j)}_{\ell} X^{(\ell)}_1 \ \
	B_j A^{(j)}_{\ell} X^{(\ell)}_2 \ \
	\cdots \ \
	B_j A^{(j)}_{\ell} X^{(\ell)}_{\ell-1} \ \
	B_j A^{(j)}_{\ell} B_{\ell} \ \ O \ \ \cdots \ \ O
	\right].
	\end{split}
	\]
	Therefore it is enough to show that
	\[
	X^{(j)}_{\ell} 
	= B_j A^{(j)}_{j-1} X^{(j-1)}_{\ell} 
	+ B_{j} A^{(j)}_{j-2} X^{(j-2)}_{\ell} 
	+ \cdots 
	+ B_{j} A^{(j)}_{\ell+1} X^{(\ell+1)}_{\ell} 
	+ B_{j} A^{(j)}_{\ell} B_{\ell},
	\]
	which comes from Lemma~\ref{lemma_Xjl_summand}. Hence we have 
	the tangential $\mathbb T$-representation as in the proposition.

Finally, we claim that 
the weights of the isotropy representation 
of $\mathbb T$ on $T_w F_m$ are pairwise linearly independent.
For a fixed point $w$, let $\mathbf {c}_1, \mathbf{c}_2 \in \Z^n$ be weights
of the tangential $\mathbb T$-representation $T_w F_m 
\cong \bigoplus_{j=1}^m \mathfrak{m}_j$. 
Assume that the weight $\mathbf{c}_1$ comes from $\mathfrak{m}_{j_1}$ and 
$\mathbf{c}_2$ comes from $\mathfrak{m}_{j_2}$ for $j_1 < j_2$. 
Then by the description in \eqref{eq_mj}, 
$\mathbf{c}_1$ is a linear combination of 
$\{\ep_{j,k}^{\ast} \mid 1\leq j \leq j_1, 1\leq k \leq n_j+1\}$.
Since $\mathbf{c}_2$ has nonzero coefficients in $\{\ep_{j_2,k}^{\ast}
\mid 1 \leq k \leq n_{j_2}+1\}$ and $j_1 < j_2$, 
two weights $\mathbf{c}_1$ and $\mathbf{c}_2$ are linearly independent. 
Suppose that both of two weights $\mathbf{c}_1$ and $\mathbf{c}_2$ come from
$\mathfrak{m}_j$. Then they have nonzero
coefficients in $\{\ep_{j,k}^{\ast} \mid 1 \leq k \leq n_j+1\}$ which
are determined by the permutation matrix $B_j$ by \eqref{eq_mj}. 
Hence they are linearly independent, so the result follows. 
\end{proof}

%%%%%%%%%%%%%%%%%%%%%%%%%%%%%%%%%%%%%%%%%%%%%%%%%%%%%%%%%%%%%%%%%%%%%%%%
\subsection{GKM graphs}
%%%%%%%%%%%%%%%%%%%%%%%%%%%%%%%%%%%%%%%%%%%%%%%%%%%%%%%%%%%%%%%%%%%%%%%%
In the previous subsection, we showed that 
a flag Bott manifold $(F_m, \bf T)$ is a GKM manifold.
For a given GKM manifold $(M,T)$, one can define the following
labeled graph $(\Gamma, \alpha)$; see \cite{GuZa01} for more details.
\begin{definition}
	Let $(M,T)$ be a GKM manifold. The \defi{GKM graph} 
	$(\Gamma, \alpha)$ consists of 
\begin{itemize}
	\item \textbf{vertices:} $V(\Gamma)= M^T$, 
	\item \textbf{edges:} $e \colon v \to w \in E(\Gamma)$ 
	if and only if there exists a $T$-invariant embedded
	2-sphere $X_e$ containing $v, w \in M^T$, and 
	\item \textbf{axial function:} 
	for an edge $e \colon v \to w$, the \defi{axial function} $\alpha$
	maps an edge $e$ to the weight of the isotropy representation $T_v X_e$
	of $T$. 
\end{itemize}
\end{definition}
For an oriented edge $e$ we write $i(e)$, respectively $t(e)$, 
the initial, respectively terminal, vertex of $e$. 
Moreover we write $\overline{e}$ for the oriented edge $e$ with the 
reversed orientation.
For $v \in V(\Gamma)$ we set
\[
E(\Gamma)_v = \{ e \in E(\Gamma) \mid i(e) = v\}.
\]
For the GKM graph  $(\Gamma, \alpha)$ associated to a GKM
manifold $(M,T)$, a collection $\theta = \{\theta_e\}$ of bijections
\[
\theta_e \colon E(\Gamma)_{i(e)} \to E(\Gamma)_{t(e)},
\quad e \in E(\Gamma)
\]
satisfying the following conditions  can be determined naturally:
\begin{enumerate}
	\item $(\theta_e)^{-1} = \theta_{\overline{e}}$
	for $e \in E(\Gamma)$,
	\item $\theta_e$ maps $e$ to $\overline{e}$ for $e \in E(\Gamma)$, and
	\item for $e \in E(\Gamma)$ and $e' \in E(\Gamma)_{i(e)}$,
	there exists $c \in \Z$ such that 
	$\alpha(\theta_e(e')) = 
	\alpha(e') + c \alpha(e)$.
\end{enumerate}
The collection $\theta = \{\theta_e\}$ is called the \defi{connection}.

In Subsection~\ref{sec_tang_rep_of_fBT}, 
we considered $F_m$ with the noneffective canonical $\mathbb T$-action,
and expressed the tangential representation 
$T_{\dw} F_m$ in terms of the weights using the standard basis 
$\{ \ep_{1,1}^{\ast},\dots,\ep_{1,n_1+1}^{\ast},\dots,\ep_{m,1}^{\ast},
\dots,\ep_{m,n_m+1}^{\ast}\}$ in \eqref{eq_Lie_basis}. 
But in the GKM description, we need to consider the effective canonical
$\bf T$-action on $F_m$. Therefore 
to consider the axial function with respect to $\bf T$-action,
we should put 
\begin{equation}\label{eq_effective_basis}
\ep_{1,n_1+1}^{\ast} = \cdots = \ep_{m,n_m+1}^{\ast} = 0
\end{equation} 
in the formula of Proposition~\ref{prop_fBT_is_GKM}.
\begin{theorem}\label{thm_GKM_of_Fm}
	Let $F_m$ be a flag Bott manifold with the effective canonical
	$\bf T$-action. Then
	the GKM graph $(\Gamma, \alpha)$ of $(F_{m}, \bf T)$ consists of 
	\begin{description}
		\item[vertices] $V(\Gamma) =  \prod_{j=1}^m \mathfrak{S}_{n_j+1}$,
		\item[edges] 
		$E(\Gamma)$ is the set of elements
		$w=(w_1,\dots,w_m)$ and $w'=(w_1',\dots,w_m')$ in $V(\Gamma)$ such that 
		$w'=(w_{1},\ldots,w_{j}(r,s),\ldots,w_{m})$ for some transposition $(r,s)\in \mathfrak{S}_{n_{j}+1}$, and
		\item[axial function] 
		for $w$ and $w'$ as above such that 
		$r,s\in [n_{j}+1]$, $r>s$, then
		\[
		\alpha(ww')=\ro{j}{r} - \ro{j}{s},
		\]
		where $\ro{j}{k}$ is the $k$th row of the matrix 
		$\left[ \X{j}{1} \ \X{j}{2} \ \cdots \ \X{j}{j-1} \ B_j
		\ O \cdots \ O \right]$
		for $k \in [n_j+1]$,
		the matrices $\X{j}{\ell}$ are as in \eqref{eq_def_of_Xjl}
		with the modification according to \eqref{eq_effective_basis}. 
	\end{description}
\end{theorem}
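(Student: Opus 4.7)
The plan is to assemble the GKM graph of $(F_m,\mathbf T)$ in three steps: identify the vertices, read off the axial-function values from the tangential representation, and construct explicit $\mathbf T$-invariant $\C P^1$'s realizing the edges with the claimed endpoints.

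The first two steps follow from previous results. Proposition~\ref{prop_fixed_points} identifies $F_m^{\mathbb H}$ with $\mathfrak{S}_{n_1+1}\times\cdots\times\mathfrak{S}_{n_m+1}$, and since the noneffective part of the $\mathbb H$-action is exactly $\ker(\mathbb H\twoheadrightarrow\mathbf H)$, the $\mathbf T$-fixed set coincides with the $\mathbb H$-fixed set; this yields $V(\Gamma)$. Proposition~\ref{prop_fBT_is_GKM} shows that at a fixed point $\dw$ the tangential $\mathbb T$-representation decomposes as $\bigoplus_{j=1}^m\mathfrak{m}_j$ with one-dimensional weight spaces carrying weights $\ro{j}{r}-\ro{j}{s}$ for $1\le s<r\le n_j+1$, and passage to the effective $\mathbf T$-action is accomplished by imposing~\eqref{eq_effective_basis}. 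Because these weights are pairwise linearly independent, each determines a unique $\mathbf T$-invariant embedded $\C P^1$ through $\dw$ whose tangent at $\dw$ carries that weight, so the axial function is forced to assign that weight to the corresponding edge.

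The main remaining task is to identify the other pole of each such $\C P^1$. I do this by constructing the sphere explicitly. Let $\iota_{r,s}\colon SU(2)\hookrightarrow U(n_j+1)$ be the block embedding into the $\{r,s\}$-rows and $\{r,s\}$-columns, and, using the compact presentation from Remark~\ref{rmk_compact_description}, set
\[
\tilde\psi\colon SU(2)\to F_m,\qquad g\mapsto[\dw_1,\dots,\dw_{j-1},\,\dw_j\,\iota_{r,s}(g),\,\dw_{j+1},\dots,\dw_m].
\]
The twisted right $\mathbb T$-action~\eqref{eq_quotient_torus} lets $\tilde\psi$ descend along $SU(2)\twoheadrightarrow SU(2)/T_{SU(2)}\cong\C P^1$: given $t\in T_{SU(2)}$, the element $(I,\dots,I,\iota_{r,s}(t),t_{j+1},\dots,t_m)\in\mathbb T$ with $t_k:=\dw_k^{-1}\Lam{k}{j}(\iota_{r,s}(t))\dw_k$ for $k>j$ (each of which lies in $T^{n_k+1}$ because $\dw_k$ normalizes the diagonal torus) absorbs the shift $g\mapsto gt$. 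The resulting embedding $\psi\colon\C P^1\hookrightarrow F_m$ sends the identity coset to $\dw$ and the $SU(2)$-Weyl-element coset to $[\dw_1,\dots,\dw_jP_{r,s},\dots,\dw_m]$, where $P_{r,s}$ denotes the permutation matrix of $(r,s)\in\mathfrak{S}_{n_j+1}$ and the diagonal sign-factor relating $\iota_{r,s}$ of the Weyl element to $P_{r,s}$ is absorbed by the $\mathbb T$-action. Since right-multiplication of $\dw_j$ by $P_{r,s}$ produces the column-permutation matrix of $w_j\circ(r,s)$, the second pole is $w'$. Finally, the tangent of $\psi$ at $\dw$ lies in the $(r,s)$-off-diagonal direction of $\mathfrak{m}_j$ and therefore carries the weight $\ro{j}{r}-\ro{j}{s}$, as computed in the proof of Proposition~\ref{prop_fBT_is_GKM}.

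The main technical obstacle is the descent and $\mathbf T$-equivariance of $\psi$ in the presence of the twisted right $\mathbb T$-action; both reduce to the fact that each $\dw_k$ normalizes the diagonal torus $T^{n_k+1}$, so a maximal-torus translation in the $j$th coordinate is compensated coordinate-by-coordinate in the higher slots via the homomorphisms $\Lam{k}{j}$ together with conjugation by the permutation matrices $\dw_k$.
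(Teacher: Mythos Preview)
Your proof is correct and takes a genuinely different route from the paper's. The paper argues by induction on $m$ that the GKM graph $\Gamma$ is the product $\Gamma_1\times\cdots\times\Gamma_m$ of the GKM graphs of the flag manifolds $\flag(n_j+1)$: for each $w\in\mathfrak{S}_{n_m+1}$ the $\mathbf T$-equivariant section $s_w\colon F_{m-1}\to F_m$, $[g_1,\dots,g_{m-1}]\mapsto[g_1,\dots,g_{m-1},\dw]$, embeds the GKM graph of $F_{m-1}$, while the fibers over fixed points contribute copies of the GKM graph of $\flag(n_m+1)$; a count of edges at each vertex then shows these exhaust $E(\Gamma)$, and the axial function comes from Proposition~\ref{prop_fBT_is_GKM} just as in your argument. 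Your approach instead exhibits each invariant sphere explicitly as an $SU(2)$-orbit through $\dw$ and reads off the second pole from the Weyl element. This buys a concrete geometric picture of every edge, whereas the paper's argument is shorter and yields the global product structure of $\Gamma$ without ever constructing an individual sphere.

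One small correction: your formula $t_k=\dw_k^{-1}\Lam{k}{j}(\iota_{r,s}(t))\dw_k$ is only right for $k=j+1$. For $k\ge j+2$ the $k$th coordinate under the right action~\eqref{eq_quotient_torus} also carries factors $\Lam{k}{j+1}(t_{j+1})^{-1},\dots,\Lam{k}{k-1}(t_{k-1})^{-1}$, so $t_k$ must be defined recursively as
\[
t_k=\dw_k^{-1}\,\Lam{k}{k-1}(t_{k-1})\cdots\Lam{k}{j+1}(t_{j+1})\,\Lam{k}{j}(\iota_{r,s}(t))\,\dw_k.
\]
The key point you already isolate---that each $\dw_k$ normalizes $T^{n_k+1}$---still guarantees these recursively defined $t_k$ lie in the torus, so the descent goes through with this amended formula and the rest of your argument is unaffected.
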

\begin{proof}
To find the GKM graph $\Gamma$, we recall that
the product $\Gamma_1 \times \Gamma_2$ of graphs $\Gamma_1$, $\Gamma_2$ 
consists of vertices
$V(\Gamma_1 \times \Gamma_2) := V(\Gamma_1) \times V(\Gamma_2)$ and
edges $E(\Gamma_1 \times \Gamma_2)$ such that
$ e \colon (w_1,w_2) \to (w_1',w_2') \in E(\Gamma_1 \times \Gamma_2)$ if and only if
either $w_1 =w_1'$ and $w_2 \to w_2' \in E(\Gamma_2)$, or 
$w_2 =w_2'$ and $w_1 \to w_1' \in E(\Gamma_1)$. 
We claim that the GKM graph $\Gamma$ of $F_m$ is the product of graphs
$\prod_{j=1}^m \Gamma_j $, where $\Gamma_j$ is 
the GKM graph of $\flag(n_j+1)$. 

By Proposition~\ref{prop_fixed_points}, we know that 
$V(\Gamma) = V(\prod_{j=1}^m \Gamma_j )$. 
To find edges on the graph $\Gamma$, we use an induction argument on the stage.
When the stage is $1$, then our claim obviously holds.
Assume that the GKM graph of $F_{j}$ is the product $\prod_{\ell=1}^j \Gamma_{\ell}$ for $1 \leq j \leq m-1$. 
For $w \in \mathfrak{S}_{n_m+1}$,
let $s_w \colon F_{m-1} \to F_m$ be a section of the fibration 
$F_m \to F_{m-1}$ defined by 
$[g_1,\dots,g_{m-1}] \mapsto [g_1,\dots,g_{m-1},\dw]$. 
Since the section $s_w$ is $\mathbf T$-equivariant, it produces
the GKM graph of $F_{m-1}$ in $\Gamma$.
Hence the section $s_w$ gives edges $(w_1,\dots,w_{m-1},w) 
\to (w_1',\dots,w_{m-1}',w)$ 
in $\Gamma$ such that $(w_1,\dots,w_{m-1}) 
\to (w_1',\dots,w_{m-1}') \in E(\prod_{j=1}^{m-1} \Gamma_{j})$. 

On the other hand, a fiber over each fixed point in $F_{j-1}$
produces the GKM graph of $\flag(n_j+1)$.
Therefore for $(w_1,\dots,w_{m-1}) \in V(\prod_{j=1}^{m-1} \Gamma_{j})$, we have edges $(w_1,\dots,w_{m-1},w_m)
\to (w_1,\dots,w_{m-1},w_m')$ such that $w_m \to w_m' \in E(\Gamma_m)$. 
Let $2N$ be the real dimension of $F_m$. Then
we have that $|E(\Gamma)_v| = N$ 
for every vertex $v \in V(\Gamma)$ by the definition of GKM graph.
The above constructions give exactly $N$ many edges starting from a vertex $v$, 
so we have that $\Gamma = (\prod_{j=1}^{m-1} \Gamma_{j})
\times \Gamma_m$.
By Proposition~\ref{prop_fBT_is_GKM} we have the axial function 
as stated in the theorem.
\end{proof}
As a direct consequence of Theorem~\ref{thm_GKM_of_Fm}, we get the following.
	\begin{corollary}
		The GKM graph $\Gamma$ of $F_m$ is combinatorially equivalent to the product $\prod_{j=1}^m \Gamma_j$, where $\Gamma_j$ is the GKM graph of $\flag(n_j+1)$.
	\end{corollary}

\begin{example}\label{example_GKM_F1_graph}
	Consider $F_1 = \flag(3)$ as in Example~\ref{example_GKM_F1}. 
 	At the point $[\dw]$ determined by $ w=(231) \in \mathfrak{S}_3$, 
 	we have that
 	$T_{[\dw]} F_1 \cong V(B)$, where 
 	$B = \begin{bmatrix}
 	0 & 1 & 0 \\ 0 & 0 & 1 \\ 1 & 0 & 0
 	\end{bmatrix}$. 
	With the effective canonical torus action,
	the tangential representation is
	\[
	T_{[\dw]} F_1 \cong V(-\ep_2^{\ast}) \oplus V(\ep_1^{\ast}) \oplus 
	V(\ep_1^{\ast}-\ep_2^{\ast}).
	\]
	We have an edge $(231) \to (132)$ in the GKM graph 
	since $(132) = (231)(3,1)$
	for the transposition $(3,1) \in \mathfrak{S}_3$.
	Hence the axial function for the edge $(231) \to (132)$ is 
	$\ep_1^{\ast} - \ep_2^{\ast}$. 
	One can do the similar computations for the other fixed points,
	and we have the GKM graph as in Figure~\ref{fig_GKM_graph_Fl3}.
	In the figure, parallel edges have the same axial functions. \hfill \qed
\end{example}

\begin{example}\label{example_GKM_graph_F2}
	Let $F_2$ be the 2-stage flag Bott manifold defined by
	$\A{2}{1} = \begin{bmatrix}
	c_1 & c_2 & 0 \\ 0 & 0 & 0 
	\end{bmatrix}$ as in Example~\ref{example_GKM_F2_2}. 
	The $3$-dimensional compact torus acts effectively on $F_2$.
	Let $\{\ep_{1,1}^{\ast},\ep_{1,2}^{\ast},\ep_{2,1}^{\ast}\}$ be the standard
	basis of $\Lie ((S^1)^2 \times (S^1))^{\ast}$. 
	Near the fixed point given by $(e,s_1) \in \mathfrak{S}_3 \times
	\mathfrak{S}_2$, we have
	the tangential representation as follows:
	\[
	V(\ep_{1,2}^{\ast} - \ep_{1,1}^{\ast})
	\oplus V(-\ep_{1,2}^{\ast}) \oplus V(-\ep_{1,1}^{\ast}) \oplus
	V(c_1\ep_{1,1}^{\ast} + c_2 \ep_{1,2}^{\ast} + \ep_{2,1}^{\ast}).
	\]
	One can see that the
	induced subgraph $\Gamma$, respectively $\Gamma'$, whose vertex set is
	$\mathfrak{S}_3 \times \{e\}$, respectively $\mathfrak{S}_3 \times \{s_1\}$,
	is the same as the GKM graph of $\flag(3)$ with the 
	action of the torus $T^2$ in Example~\ref{example_GKM_F1_graph}. 
	Therefore it is enough to consider the axial functions of 
	edges of the form $e_{w} := (w,e) \to (w,s_1)$ for $w \in \mathfrak{S}_3$. 
	By a similar computation to Example~\ref{example_GKM_F1_graph}, 
	we get the GKM graph of $F_2$
as in Figure~\ref{fig_F2_2}, whose axial function for vertical edges is listed as follows.
\[
\begin{array}{ll}
\alpha(e_{(123)})= -c_1\ep_{1,1}^{\ast} - c_2 \ep_{1,2}^{\ast} - \ep_{2,1}^{\ast}, &
\alpha(e_{(213)}) = -c_2 \ep_{1,1}^{\ast} - c_1 \ep_{1,2}^{\ast} - \ep_{2,1}^{\ast},\\
\alpha(e_{(231)}) = -c_1 \ep_{1,2}^{\ast} - \ep_{2,1}^{\ast}, &
\alpha(e_{(321)}) = -c_2 \ep_{1,2}^{\ast} - \ep_{2,1}^{\ast}, \\
\alpha(e_{(312)}) = -c_2\ep_{1,1}^{\ast} - \ep_{2,1}^{\ast}, &
\alpha(e_{(132)}) = -c_1\ep_{1,1}^{\ast} - \ep_{2,1}^{\ast}.\\
\end{array}
\]
Note that nontrivial 
coefficients of $\ep_{1,1}^{\ast}$ and $ \ep_{1,2}^{\ast}$ shows that $F_2$ is a
nontrivial $\C P^1$-bundle over $\flag(3)$. 
%For the convenience of the readers, we  embed the GKM graph in $\mathbb{R}^3$.   For instance, $(123,12)$ and  $(123,21)$ correspond to points $(-1,-1,1)$ and $(0,0,0)$ in $\mathbb{R}^3$ respectively with coordinates $(\varepsilon_{1,1}^{\ast}, \varepsilon_{1,2}^{\ast}, \varepsilon_{2,1}^{\ast})$. Moreover, $\alpha(e_{123}) = (0,0,0)-(-1,-1,1) = (1,1,-1) = \varepsilon_{1,1}^{\ast} + \varepsilon_{1,2}^{\ast} - \varepsilon_{2,1}^{\ast}$.
\end{example}
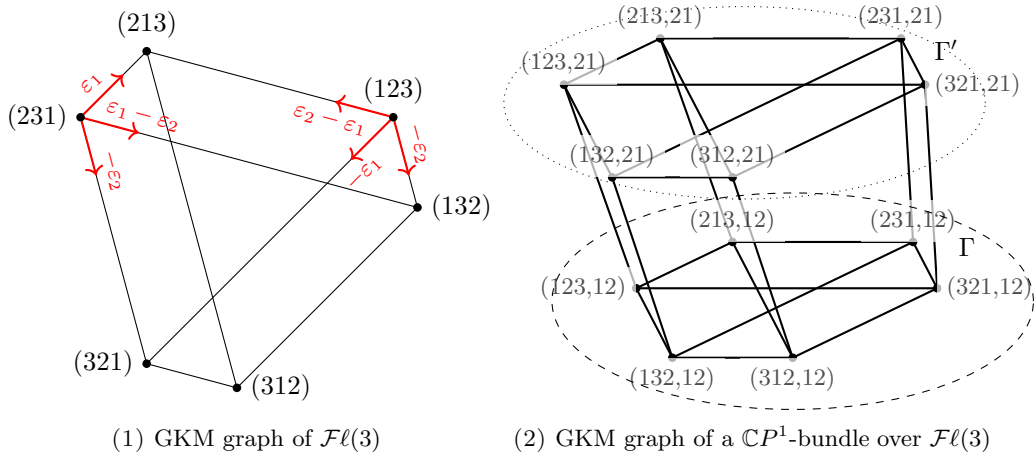
\begin{figure}[H]
	\begin{subfigure}[t]{0.4\textwidth}
		\centering
		\begin{tikzpicture}[scale=0.8]
		\newdimen\R
		\R = 3cm;
		\draw (0:\R) \foreach \x in {30, 120, 150, 240, 270, 360}	{ -- (\x: \R)} ;
		\foreach \x/\l/\p in
		{ 30/{(123)}/above,
			120/{(213)}/above,
			150/{(231)}/left,
			240/{(321)}/left,
			270/{(312)}/right,
			360/{(132)}/right
		}
		\node (r\x) [circle, draw, fill, inner sep = 1pt, label = {\p:\l}] at (\x:\R) {};
		\draw (30:\R) -- (240:\R);
		\draw (120:\R) -- (270:\R);
		\draw (150:\R) -- (360:\R);
		
		\draw[red, thick, ->, above] (r30) -- ($(r30)!1cm!(r120)$) 
		node[at end, below, sloped]{\small{$\ep_2-\ep_1$}};
		\draw[red, thick, ->, above] (r30) -- ($(r30)!1cm!(r240)$) 
		node[at end, below, sloped]{\small{$-\ep_1$}};
		\draw[red, thick, ->, above] (r30) -- ($(r30)!1cm!(r360)$) 
		node[midway, above, sloped]{\small{$-\ep_2$}};
		
		\draw[red, thick, ->, above] (r150) -- ($(r150)!1cm!(r120)$) 
		node[midway, above, sloped]{\small{$\ep_1$}};
		\draw[red, thick, ->, above] (r150) -- ($(r150)!1cm!(r360)$) 
		node[at end, above, sloped]{\small{$\ep_1-\ep_2$}};
		\draw[red, thick, ->, above] (r150) -- ($(r150)!1cm!(r240)$) 
		node[at end, above, sloped]{\small{$-\ep_2$}};
		\end{tikzpicture}
		\caption{GKM graph of $\flag(3)$.}
		\label{fig_GKM_graph_Fl3}
	\end{subfigure}%
	~
	\begin{subfigure}[t]{0.4\textwidth}
		\centering
		\begin{tikzpicture}[y={(2mm,-3.85mm)},z={(0cm,3cm)}, scale=0.8]
		
		\begin{scope}[color=gray!50, thin]
		\foreach \xi in {-1,0,1,2,3,4,5} { \draw (\xi,-3,1.5) -- (\xi, -3,0) -- (\xi, 5, 0); }%
		\foreach \yi in {-3,-1,1,3,5} {\draw (5,\yi,1.5) -- (5,\yi,0) -- (-1,\yi,0);}%
		\foreach \zi in {0,0.5,1,1.5} {\draw (-1,-3,\zi) -- (5,-3,\zi) -- (5,5,\zi);}%
		\end{scope}
		
		%\draw[thin] (-6,-4,0)--(-6,-4,-1)--(-6,4,-1)--(0,4,-1);
		
		\draw[thin] (-1,-3,1.5)--(-1,-3,0)--(-1,5,0)--(5,5,0);
		
		\foreach \xi in {-1,0,1,2,3,4,5} {\draw (\xi,5,0) -- (\xi, 5.2, 0) node[anchor=north west] {\tiny{\xi}};}
		\foreach \yi in {-3,-1,1,3,5} {\draw (-1,\yi,0) -- (-1.1,\yi, 0) node[anchor=north east] {\tiny{\yi}};}
		\foreach \zi in {0,0.5,1,1.5} {\draw (-1,-3,\zi) -- (-1.1,-3,\zi) node[anchor=east] {\tiny{\zi}};}
		
		\node at (2.5,7,0) {$\varepsilon_{1,1}^{\ast}$};
		\node at (-2,1,0) {$\varepsilon_{1,2}^{\ast}$};
		\node at (-2,-3,0.7) {$\varepsilon_{2,1}^{\ast}$};

		\coordinate (B1) at (0,0,0) ;
		\coordinate (B2) at (2,-2,0) ;
		\coordinate (B3) at (5,-2,0) ;
		\coordinate (B4) at (5,0,0) ;
		\coordinate (B5) at (2,3,0) ;
		\coordinate (B6) at (0,3,0) ;
		
		\draw[thick] (B1) -- (B2) -- (B3) -- (B4) -- (B5) -- (B6) -- (B1);
		\draw[thick] (B1) -- (B4) ;
		\draw[thick] (B2) -- (B5) ;
		\draw[thick] (B3) -- (B6) ;
		
		\coordinate (T1) at (-1,-1,1) ;
		\coordinate (T2) at (1,-3,1) ;
		\coordinate (T3) at (5,-3,1) ;
		\coordinate (T4) at (5,-1,1) ;
		\coordinate (T5) at (1,3,1) ;
		\coordinate (T6) at (-1,3,1) ;
		
		\draw[ thick] (T1) -- (T2) -- (T3) -- (T4) -- (T5) -- (T6) -- (T1);
		\draw[thick] (T1) -- (T4) ;
		\draw[thick] (T2) -- (T5) ;
		\draw[thick] (T3) -- (T6) ;
		
		\draw[thick] (B1) -- (T1) ;
		\draw[thick] (B2) -- (T2) ;
		\draw[thick] (B3) -- (T3) ;
		\draw[thick] (B4) -- (T4) ;
		\draw[thick] (B5) -- (T5) ;
		\draw[thick] (B6) -- (T6) ;
		
		\definecolor{bottom}{rgb} {1,1,1}
		\definecolor{top}{rgb} {1,1,1}
		
		\foreach \x in {1,2,3,4,5,6}{
			\node[circle,draw,fill,inner sep=1pt] at (B\x) {};
		}
		\node[left, fill=bottom, 
		fill opacity = 0.7, text opacity = 1] at (B1) {\small(123,21)} ;
		\node[above, fill=bottom, 
		fill opacity = 0.7, text opacity = 1] at (B2) {\small(213,21)} ;
		\node[above, fill=bottom, 
		fill opacity = 0.7, text opacity = 1] at (B3) {\small(231,21)} ;
		\node[right, fill=bottom, 
		fill opacity = 0.7, text opacity = 1] at (B4) {\small(321,21)} ;
		\node[below, fill=bottom, 
		fill opacity = 0.7, text opacity = 1] at (B5) {\small(312,21)} ;
		\node[below, fill=bottom, 
		fill opacity = 0.7, text opacity = 1] at (B6) {\small(132,21)} ;

		\foreach \x in {1,2,3,4,5,6} {
			\node[circle, draw, fill, inner sep=1pt] at (T\x) {};
		}
		\node[above, fill=top, 
		fill opacity = 0.7, text opacity = 1] at (T1) {\small(123,12)} ;
		\node[above,  fill=top, 
		fill opacity = 0.7, text opacity = 1] at (T2) {\small(213,12)} ;
		\node[above,  fill=top, 
		fill opacity = 0.7, text opacity = 1] at (T3) {\small(231,12)} ;
		\node[right,  fill=top, 
		fill opacity = 0.7, text opacity = 1] at (T4) {\small(321,12)} ;
		\node[above,  fill=top, 
		fill opacity = 0.7, text opacity = 1] at (T5) {\small(312,12)} ;
		\node[above ,  fill=top, 
		fill opacity = 0.7, text opacity = 1] at (T6) {\small(132,12)} ;

		%		\draw [dashed](6.8,-1,-0.2) arc (0:360:4cm and 1.8cm);
		%		\node [below] at (5.7,-1,0.2) {$\Gamma$};
		%		
		%		\draw [dotted](6,-1,0.9) arc (0:360:4cm and 1.6cm);
		%		\node [below right] at (5,-1,1.3) {$\Gamma'$};
		\end{tikzpicture}
		\caption{GKM graph of a $\C P^1$-bundle over $\flag(3)$.}
		\label{fig_F2_2}
	\end{subfigure}
	\caption{GKM graphs.}\label{fig_GKM_graph}
\end{figure}

\begin{example}
Consider the 3-stage flag Bott manifold $F_3$ as in
Example~\ref{example_GKM_F3}.
Let $\dw = [\dw_1,\dw_2,\dw_3]$ be a fixed point where
$w_1 = (312) \in \mathfrak{S}_3, w_2 = e \in \mathfrak{S}_2$, and $w_3 = (21)
\in \mathfrak{S}_2$. For an edge $(w_1,w_2,w_3) 
\to (w_1, w_2, w_3(2,1))$, the axial function is 
$\rho_2^{(3)} - \rho_1^{(3)}$ where $\rho^{(3)}_k$ is the $k$th
row of the matrix
\[
\left[ \X{3}{1} \ \X{3}{2} \ B_3\right]
= \begin{bmatrix}
0 & 0 & 0 & 0& 0 & 0 & 1\\
14 & 0 & 8 & 5 & 0 & 1 & 0
\end{bmatrix}.
\]
Hence with the modification according to \eqref{eq_effective_basis},
the axial function is $14 \ep_{1,1}^{\ast} + 5 \ep_{2,1}^{\ast} + \ep_{3,1}^{\ast}$.
\hfill \qed
\end{example}

\begin{remark}
	Let $F_{\bullet}$ be a flag Bott tower, and
	$(\Gamma_j, \alpha_j)$ the GKM graph of $j$-stage flag Bott manifold $F_j$.
Then $(\Gamma_j, \alpha_j) \to (\Gamma_{j-1}, \alpha_{j-1})$ is a GKM fiber bundle, see~\cite[Definition 2.3.5]{sabatini2009topology}, induced from the fibration $F_j \to F_{j-1}$ for $1 \leq j \leq m$.
The module basis of GKM graph cohomology of GKM fiber bundle has been computed in~\cite{sabatini2009topology} and~\cite{guillemin2012cohomology}. In the paper~\cite{KKLS}, we compute the equivariant cohomology rings of flag Bott manifolds by using the Borel--Hirzebruch formula.
\end{remark}
%%%%%%%%%%%%%%%%%%%%%%%%%%%%%%%%%%%%%%%%%%%%%%%%%%%%%%%%%%%%%%%%%%%%%%%%
\section{Generalized Bott manifolds and the associated flag Bott manifolds}
\label{sec_gBT_and_asso_fBT}
%%%%%%%%%%%%%%%%%%%%%%%%%%%%%%%%%%%%%%%%%%%%%%%%%%%%%%%%%%%%%%%%%%%%%%%%
We begin this section by reviewing \emph{generalized Bott towers} studied 
in \cite{CMS10-quasitoric, CMS-Trnasaction} and studying their fans based on 
\cite[Section 7.3]{CLS11}. 
%To each generalized Bott tower, 
%we define the associated flag Bott tower.
%there is a 
%natural association with a flag Bott tower.
%The goal of this section is to 
%describe a set of integer matrices $\{A_\ell^{(j)}\}$ which determines
%the associated flag Bott tower as in Subsection \ref{subsec_orb_sp_const_fbt}, 
%and to see how they are induced from the fan of a generalized Bott tower. 

%
%%%%%%%%%%%%%%%%%%%%%%%%%%%%%%%%%%%%%%%%%%%%%%%%%%%%%%%%%%%%%%%%%%%%%%%%%
%\subsection{Generalized Bott Manifolds}\label{subsec_gBT}
%%%%%%%%%%%%%%%%%%%%%%%%%%%%%%%%%%%%%%%%%%%%%%%%%%%%%%%%%%%%%%%%%%%%%%%%%

\begin{definition}{\cite[Defintion 6.1]{CMS10-quasitoric}}
	\label{def:gBT}
	A \defi{generalized Bott tower} $B_{\bullet}=\{B_j \mid 0 \leq j \leq m\}$
	of height $m$ (or an \defi{$m$-stage generalized Bott tower})
	is a sequence,
	\[
	\begin{tikzcd}
	B_m \arrow[r,"\pi_m"]
	& B_{m-1} \arrow[r,"\pi_{m-1}"]
	& \cdots \arrow[r,"\pi_3"] 
	& B_2 \arrow[r,"\pi_2"]
	& B_1 \arrow[r,"\pi_1"]
	& B_0 =\{ \text{a point} \},
	\end{tikzcd}
	\]
	of manifolds $B_j = \mathbb{P}(E^j_1 \oplus \cdots \oplus E^j_{n_j}
	\oplus \underline{\C})$ 
	where $E^j_k$ is a holomorphic line bundle over $B_{j-1}$ 
	for $1 \leq k \leq n_j$, 
	$\underline{\C}$ is the trivial line bundle over $B_{j-1}$, and
	$\mathbb{P}(\cdot)$ stands for the projectivization of each fiber.
	We call $B_j$ the \defi{$j$-stage generalized Bott manifold}
	of a generalized Bott tower. 
\end{definition}
\begin{example}
	\begin{enumerate}
		\item Every projective space $\C P^{n}$ is a generalized Bott
		tower of height $1$.
		\item The product of projective spaces 
		$\C P^{n_1} \times \cdots \times \C P^{n_m}$ is an $m$-stage
		generalized Bott manifold.
		\item When $n_j=1$ for $1 \leq j \leq m$,
		an $m$-stage generalized Bott manifold is an $m$-stage
		Bott manifold (see~Example~\ref{example_fBT}(3)). \hfill \qed
	\end{enumerate}
\end{example}
Recall from~\cite[Exercise II.7.9]{Hart77} that for each $1 \leq j \leq m$,
the set of isomorphic classes of holomorphic line bundles
on $B_{j-1}$ is isomorphic to 
$\Z^{j-1}$. More precisely, for $1 \leq j \leq m$, the homomorphism 
\[
\Z^{j-1} \to \Pic(B_{j-1}),~~
(a_{1},\dots,a_{j-1}) \mapsto 
(\eta^j_{1})^{\otimes a_{1}} \otimes (\eta^j_{2})^{ \otimes a_{2}} 
\otimes \cdots 
\otimes (\eta^j_{j-1})^{\otimes a_{j-1}}
\]
is an isomorphism
since $B_j$ is an iterated sequence of projective space bundles.
Here, 
$\eta^j_{j-1}$ is the tautological line bundle over $B_{j-1}$, and
$\eta^j_{\ell}
= \pi_{j}^{\ast} \circ \cdots \circ 
\pi_{\ell+1}^{\ast} (\eta^{\ell+1}_{\ell})$,
for each $1 \leq \ell \leq j-2$.
Therefore for each holomorphic line bundle $E^j_k$ over $B_{j-1}$,
there exist integers $a^j_{k,1},\dots,a^j_{k,j-1}$ such that
\[
E^j_{k} =  (\eta^j_{1})^{\otimes a^j_{k,1}} \otimes (\eta^j_{2})^{ \otimes a^j_{k,2}} 
\otimes \cdots 
\otimes (\eta^j_{j-1})^{\otimes a^j_{k,j-1}}.
\]
Hence, we conclude that given a generalized Bott manifold $B_{j-1}$, 
the collection of integers 
$$\{a^j_{k, \ell} \in \mathbb{Z} \mid 1 \leq k\leq n_j,~ 1\leq \ell \leq j-1\}$$
determines $B_j$.

In general, a projectivization of the sum of holomorphic line bundles over a 
toric variety is again a toric variety (see \cite[Section 7.3]{CLS11}).\footnote{Note that \cite{CLS11} uses a different convention to construct iterated projective bundles. They put the trivial line bundle on the first, but we put it on the last when we sum up line bundles in the definition of generalized Bott manifolds.}
Hence, 
so is a generalized Bott manifold $B_m$. To describe  
the fan of $B_m$, we prepare the following matrix $\Lambda$ of size 
$n \times m$;
\begin{equation}\label{eq_red_vector_char_ftn_of_gBT}
n:= n_1+ \cdots+n_m \quad \text{ and }\quad 
\Lambda:=
\left[
\begin{array}{cccccc}
-\mathbf{1}& \mathbf{0}& \cdots \\
\mathbf{a}^2_{1} & -\mathbf{1}& \mathbf{0}& \cdots \\
\vdots &\ddots & \ddots & \ddots \\
\mathbf{a}^{j}_{1} & \cdots & \mathbf{a}^{j}_{j-1}& -\mathbf{1}&\mathbf{0}& \cdots\\
\vdots &&& \ddots & \ddots \\
\mathbf{a}^m_{1} & \cdots &\cdots  &  & \mathbf{a}^m_{m-1}& -\mathbf{1}
\end{array}
\right] \!\!\!\!\!
\begin{array}{l}
\MyLBrace{1ex}{$n_1$}\\
\MyLBrace{1ex}{$n_2$}\\
\\[1.5ex]
\MyLBrace{1ex}{$n_j$}\\
\\[1.5ex]
\MyLBrace{1ex}{$n_m$}
\end{array}, 
\end{equation}
where we denote by $\mathbf{0}$, $\mathbf{1}$  and $\mathbf{a}^{j}_{\ell}$ 
the following vectors respectively: 
$$
\mathbf{0} = \begin{bmatrix}
0 \\ \vdots \\ 0
\end{bmatrix},~
\mathbf{1} = \begin{bmatrix}
1 \\ \vdots \\1
\end{bmatrix}, ~~\text{and}~~ 
\mathbf{a}^{j}_{\ell}=
\begin{bmatrix}
a^{j}_{1,\ell} \\ \vdots \\ a^{j}_{n_{j},\ell}
\end{bmatrix} \in \Z^{n_{j}} \quad \textrm{ for }1\leq \ell < j \leq m.
$$
Next, we define a set of vectors $\mathcal{U}:=\{u_{k_j}^j \mid 1\leq j\leq m,~ 1\leq k_j \leq n_j+1\}$ by 
$$u_{k_j}^j=
	\begin{cases} \ep_{j,k_j} & \text{ if } 1 \leq k_j \leq n_j, \\ 
j\text{-th column of }\Lambda&  \text{ if } k_j=n_j+1,\end{cases}$$
where $\ep_{1,1},\dots,\ep_{1,n_1},\dots,\ep_{m,1},\dots,\ep_{m,n_m}$ is the standard basis vector in $\mathbb{R}^{n} = \mathbb{R}^{n_1+ \cdots + n_m}$. 
Now, we consider the following cones
\begin{equation*}\label{eq_cones_in_gbt}
\sigma_{k_1, \dots, k_m}:=\Cone(\mathcal{U}\setminus \{u^1_{k_1}, \dots, u^m_{k_m}\}) 
\subset \mathbb{R}^{n}, 
\end{equation*}
and one can see that the vectors of $\mathcal{U}\setminus \{u^1_{k_1}, \dots, u^m_{k_m}\}$ form a $\mathbb{Z}$-basis of $\mathbb{Z}^n \subset \mathbb{R}^n$.
Hence $\sigma_{k_1, \dots, k_m}$ is a smooth cone of dimension $n$. 

\begin{proposition}
	A fan $\Sigma$ associated to $B_m$ consists of the cones 
	\begin{equation}\label{eq_cones_in_GBT}
	\Big\{ \sigma_{k_1, \dots, k_m} ~\Big|~  (k_1, \dots, k_m)\in \prod_{j=1}^m [n_j+1] \Big\}
	\end{equation}
	and their faces. 
\end{proposition}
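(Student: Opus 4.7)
The plan is to proceed by induction on the number of stages $j$, using the standard fan description of a projectivization of a direct sum of torus-invariant line bundles over a smooth toric variety (see \cite[Proposition 7.3.3]{CLS11}) to pass from the fan of $B_{j-1}$ to that of $B_j$. For the base case $j=1$, the manifold $B_1 = \C P^{n_1}$ is the toric variety whose fan has rays $\ep_{1,1}, \dots, \ep_{1,n_1}$ and $-\ep_{1,1}-\cdots-\ep_{1,n_1}$; the latter coincides with $u^1_{n_1+1}$ because the first column of $\Lambda$ truncated to its first $n_1$ entries is $-\mathbf{1}$. The maximal cones are exactly $\sigma_{k_1}$ for $k_1 \in [n_1+1]$, which matches the claim.

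For the inductive step, assume $\Sigma_{j-1}$ has the claimed form in $\mathbb{R}^{n_1+\cdots+n_{j-1}}$, and apply the projective-bundle construction of \cite[Proposition 7.3.3]{CLS11} to the sum of line bundles $E^j_1\oplus\cdots\oplus E^j_{n_j}\oplus \underline{\C}$ over $B_{j-1}$. The key input is to express each $E^j_k$ as a torus-invariant Cartier divisor by using $E^j_k = (\eta^j_1)^{\otimes a^j_{k,1}} \otimes \cdots \otimes (\eta^j_{j-1})^{\otimes a^j_{k,j-1}}$ together with an identification of each $\eta^j_\ell$ with the torus-invariant divisor corresponding to the ray $u^\ell_{n_\ell+1}$ on $B_\ell$ (pulled back to $B_{j-1}$). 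Choosing this representation, the support function $\varphi_k$ of $E^j_k$ satisfies $\varphi_k(\ep_{i,k_i})=0$ for $k_i\leq n_i$ and $\varphi_k(u^i_{n_i+1}) = -a^j_{k,i}$. Then the \emph{modified ray} recipe $u \mapsto (u, -\varphi_1(u),\dots,-\varphi_{n_j}(u))$ gives: $\ep_{i,k_i}$ (for $i<j$, $k_i\leq n_i$) stays $\ep_{i,k_i}$, and $u^i_{n_i+1}$ gets extended by $\mathbf{a}^j_i$ in the newly added $j$-th block, producing precisely the first $n_1+\cdots+n_j$ entries of the $i$-th column of $\Lambda$. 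The $n_j+1$ new rays coming from the fiber $\C P^{n_j}$ are $\ep_{j,1},\dots,\ep_{j,n_j}$ together with $-\ep_{j,1}-\cdots-\ep_{j,n_j}$, and the latter is exactly $u^j_{n_j+1}$ since the $j$-th column of $\Lambda$ is zero outside the $j$-th block when restricted to the first $n_1+\cdots+n_j$ coordinates. Finally, the maximal cones of $\Sigma_j$ from \cite[Proposition 7.3.3]{CLS11} are obtained by pairing each modified maximal cone of $\Sigma_{j-1}$ (indexed by $(k_1,\dots,k_{j-1})$) with a maximal cone of the fiber $\C P^{n_j}$ (indexed by a choice $k_j \in [n_j+1]$ of an omitted ray), yielding precisely $\sigma_{k_1,\dots,k_j}$. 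The fact that the generators of each $\sigma_{k_1,\dots,k_j}$ form a $\Z$-basis of $\Z^{n_1+\cdots+n_j}$ follows from the smoothness of the iterated projective bundle (or equivalently by a direct determinant computation using the block-triangular shape of $\Lambda$).

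The main obstacle is fixing the correct sign convention for identifying each $\eta^j_\ell$ with a specific torus-invariant divisor on $B_\ell$; once this is pinned down so that $\varphi_k(u^i_{n_i+1})=-a^j_{k,i}$ on every ray outside the $i$-th coordinate block, the remainder of the verification is a direct match between the explicit recipe of \cite[Proposition 7.3.3]{CLS11} and the definition of $\Lambda$ and of the $u^j_{k_j}$ given in the statement.
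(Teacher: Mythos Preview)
Your proposal is correct and follows essentially the same approach as the paper: both proceed by induction on the stage $j$, handle the base case $B_1 = \C P^{n_1}$ directly, and invoke \cite[Proposition 7.3.3]{CLS11} (the paper also cites Example 7.3.5) for the inductive step. The paper's proof is in fact considerably terser than yours---it simply asserts that a successive application of \cite[Section 7.3]{CLS11} yields the result---so your explicit identification of the support functions $\varphi_k$ and the verification that the lifted rays reproduce the columns of $\Lambda$ is additional detail rather than a different argument.
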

\begin{proof}
	We show the claim by the induction on the stage of a generalized Bott manifold. When $m=1$, 
	we have $u^1_k=\mathbf{e}_k$ for $1 \leq k \leq n_1$ and $u^1_{n_1+1}=-\mathbf{1}$. 
	In this case, the fan 
	$\Sigma$ consists of the cones $\{\sigma_{k_1} \subset \mathbb{R}^{n_1} \mid 1\leq k_1 \leq n_1+1\}$
	and their faces, which yields $X_\Sigma\cong \C P^{n_1}$. 
	Next, assuming that the claim holds for $(m-1)$-stage generalized Bott manifold $B_{m-1}$, 
	a successively application of the result \cite[Section 7.3]{CLS11}, 
	in particular \cite[Proposition~7.3.3 and Example~7.3.5]{CLS11}, establishes that the claim 
	holds for the $m$-stage generalized Bott manifold $B_m$. 
\end{proof}

\begin{remark}
	The fan $\Sigma$ defined above is a simplicial fan whose underlying simplicial complex
	is the dual complex of the product $P:=\prod_{j=1}^m \Delta^{n_j}$ of simplices. As a quasitoric
	manifold \cite{DaJa91, BuPa15}, the polytope together with the set $\mathcal{U}$, where we assign a facet 
	$$\Delta^{n_1} \times \dots \times \Delta^{n_{j-1}} \times f_{k_j}^j \times \Delta^{n_{j+1}} \times \dots \times \Delta^{n_m}$$
	for some facet $f_{k_j}^j$ of $\Delta^{n_j}$ to the vector $u_{k_j}^j$ for $1 \leq k_j \leq n_j+1$, form a characteristic pair
	which determines the given generalized Bott manifold. We refer the readers to  \cite{CMS10-quasitoric} and 
	\cite{CMS-Trnasaction} for more details. 
\end{remark}

%%%%%%%%%%%%%%%%%%%%%%%%%%%%%%%%%%%%%%%%%%%%%%%%%%%%%%%%%%%%%%%%%%%%%%%%

\begin{example}\label{example_gBT}
	Let $B_{\bullet}$ be a generalized Bott tower of height $3$ with $n_1 = 2$,
	$n_2 = 1$, and $n_3 = 2$. The $2$-stage generalized Bott manifold
	$B_2$
	is a $\C P^1$-fiber bundle over $\C P^2$, and 
	the $3$-stage $B_3$ is a $\C P^2$-fiber
	bundle over the manifold $B_2$. More precisely,
	\[
	\begin{tikzcd}[row sep=0.2cm]
	& E^3_1 \oplus E^3_2 \oplus \underline{\C} \dar
	& E^2_1 \oplus \underline{\C} \dar \\[2ex]
	\mathbb{P}(E^3_1 \oplus E^3_2 \oplus \underline{\C})  \arrow[d,equal] \rar &
	\mathbb{P}(E^2_1 \oplus \underline{\C})  \arrow[d,equal]  \rar
	& \C P^2  \arrow[d,equal]\\
	B_3 & B_2 & B_1 
	\end{tikzcd}
	\]
	where $\underline{\C}$ is the trivial line bundle, and 
	\[
	E^2_1 = (\eta^2_1)^{\otimes a^2_{1,1}},~ 
	E^3_1 = (\eta^3_1)^{\otimes a^3_{1,1}} \otimes (\eta^3_2)^{\otimes a^3_{1,2}},~
	E^3_2 = (\eta^3_1)^{\otimes a^3_{2,1}} \otimes (\eta^3_2)^{\otimes a^3_{2,2}}
	\]
	for some integers $a^{2}_{1,1}, a^3_{1,1}, a^3_{1,2}, a^3_{2,1},
	a^3_{2,2}$. Hence the matrix $\Lambda$ of $B_3$ is 
	\[
	\Lambda = 
	\begin{bmatrix}
	-1 & 0 & 0 \\
	-1 & 0 & 0 \\
	a^2_{1,1} & -1 &  0 \\
	a^3_{1,1} & a^3_{1,2} & -1 \\
	a^3_{2,1} & a^3_{2,2} & -1 
	\end{bmatrix}
	= \begin{bmatrix}
	-\mathbf{1} & \mathbf{0} & \mathbf{0} \\
	\mathbf{a}^2_1 & -\mathbf{1} & \mathbf{0} \\
	\mathbf{a}^3_1 &\mathbf{a}^3_2 & -\mathbf{1}
	\end{bmatrix} = [ u^1_3 \ u^2_2 \ u^3_3],
	\]
	where $\mathbf{a}^2_{1} = a^2_{1,1} \in \Z$,
	$\mathbf{a}^3_1 = (a^3_{1,1}, a^3_{2,1}) \in \Z^2$, and
	$\mathbf{a}^3_2 = (a^3_{2,1}, a^3_{2,2}) \in \Z^2$.  
	Moreover the fan $\Sigma$ associated to $B_3$ consists of cones
\begin{align*}
	&\Cone(\varepsilon_{1,1}, \varepsilon_{1,2}, \varepsilon_{2,1}, \varepsilon_{3,1}, \varepsilon_{3,2}), 
	&&\Cone(\varepsilon_{1,1}, u^1_3, \varepsilon_{2,1}, \varepsilon_{3,1}, \varepsilon_{3,2}), 
	&&\Cone(\varepsilon_{1,2}, u^1_3, \varepsilon_{2,1}, \varepsilon_{3,1}, \varepsilon_{3,2}), \\
	&\Cone(\varepsilon_{1,1}, \varepsilon_{1,2}, u^2_2, \varepsilon_{3,1}, \varepsilon_{3,2}), 
	&&\Cone(\varepsilon_{1,1}, u^1_3, u^2_2, \varepsilon_{3,1}, \varepsilon_{3,2}), 
	&&\Cone(\varepsilon_{1,2}, u^1_3,u^2_2, \varepsilon_{3,1}, \varepsilon_{3,2}), \\
	&\Cone(\varepsilon_{1,1}, \varepsilon_{1,2}, \varepsilon_{2,1}, \varepsilon_{3,1}, u^3_3),
&&\Cone(\varepsilon_{1,1}, u^1_3, \varepsilon_{2,1}, \varepsilon_{3,1}, u^3_3),
&&\Cone(\varepsilon_{1,2}, u^1_3, \varepsilon_{2,1}, \varepsilon_{3,1}, u^3_3), \\
&\Cone(\varepsilon_{1,1}, \varepsilon_{1,2}, u^2_2, \varepsilon_{3,1}, u^3_3),
&&\Cone(\varepsilon_{1,1}, u^1_3, u^2_2, \varepsilon_{3,1}, u^3_3),
&&\Cone(\varepsilon_{1,2}, u^1_3,u^2_2, \varepsilon_{3,1}, u^3_3), \\
	&\Cone(\varepsilon_{1,1}, \varepsilon_{1,2}, \varepsilon_{2,1}, \varepsilon_{3,2}, u^3_3),
&&\Cone(\varepsilon_{1,1}, u^1_3, \varepsilon_{2,1}, \varepsilon_{3,2}, u^3_3),
&&\Cone(\varepsilon_{1,2}, u^1_3, \varepsilon_{2,1},  \varepsilon_{3,2}, u^3_3), \\
&\Cone(\varepsilon_{1,1}, \varepsilon_{1,2}, u^2_2, \varepsilon_{3,2}, u^3_3),
&&\Cone(\varepsilon_{1,1}, u^1_3, u^2_2,\varepsilon_{3,2}, u^3_3),
&&\Cone(\varepsilon_{1,2}, u^1_3,u^2_2, \varepsilon_{3,2}, u^3_3)
\end{align*}
	and their faces.
	\hfill \qed
\end{example}

\begin{definition}\label{prop:lambda_of_asso_gBT}
	Let $B_{\bullet}$ be a generalized Bott tower determined by 
	the block matrix $\Lambda$ with entries $\mathbf{a}^j_{\ell}$ as in~\eqref{eq_red_vector_char_ftn_of_gBT}.
	We call a flag Bott tower $F_{\bullet}$ \defi{is associated to $B_{\bullet}$} if it is 
	determined by the set of integer matrices 
	$\{\A{j}{\ell} \in M_{n_j+1, n_{\ell}+1}(\Z) \mid 
	{1 \leq \ell < j \leq m}\}$ where
\[
	\A{j}{\ell} = \bigg[ 
	\begin{array}{cccc}
	\mathbf{a}^j_{\ell} & 
	\mathbf{0}  & \cdots & \mathbf{0} \\
	\tikzmark{mark11} 0 \tikzmark{mark12} &  \tikzmark{mark21} 0 & \cdots & 0 \tikzmark{mark22}
	\end{array}	\bigg] \!\!\!\!\!
	\begin{array}{l}
	{} \tikzmark{mark1}  \\ {} \tikzmark{mark2} 
	\end{array}\qquad.
	\]
\tikz[overlay, remember picture, decoration = {brace, amplitude = 3pt}]{
\draw[decorate, thick]  (mark12.south east)  -- (mark11.south west) node [midway, below = 3pt] {$1$};
\draw[decorate, thick] (mark22.south) -- (mark21.south)  node [midway, below = 3pt] {$n_{\ell}$};
\draw[decorate, thick] (mark1.55) -- (mark1.-55) node [midway, right = 3pt] {$n_j$};
\draw[decorate, thick] (mark2.55) -- (mark2.-55) node [midway, right = 3pt] {$1$};
}
\end{definition}
\begin{example}\label{example_gBT_afBT}
Let $B_3$ be the generalized Bott tower of height $3$ 
in Example~\ref{example_gBT}.
The associated flag Bott manifold $F_3$ to $B_3$ is determined by
the following integer matrices:
\begin{align*}
\A{2}{1} &= \begin{bmatrix}
\mathbf{a}^2_{1} & \mathbf{0} & \mathbf{0} \\
0 & 0 & 0
\end{bmatrix}
= \begin{bmatrix}
a^2_{1,1} & 0 & 0 \\ 0 & 0 & 0
\end{bmatrix} \in M_{2,3}(\Z), \\
\A{3}{1} &= \begin{bmatrix}
\mathbf{a}^3_1 & \mathbf{0} & \mathbf{0} \\
0 & 0 & 0 
\end{bmatrix}
= \begin{bmatrix}
a^3_{1,1} & 0 & 0 \\
a^3_{2,1} & 0 & 0 \\
0 & 0 & 0 
\end{bmatrix} \in M_{3,3}(\Z), \quad 
\A{3}{2} = \begin{bmatrix}
\mathbf{a}^3_2 & \mathbf{0} \\
0 & 0
\end{bmatrix}
= \begin{bmatrix}
a^3_{1,2} & 0 \\
a^3_{2,2} & 0 \\ 
0 &0
\end{bmatrix} \in M_{3,2}(\Z).
\end{align*}
\end{example}

For a generalized Bott tower $B_{\bullet}$ and its associated flag Bott tower $F_{\bullet}$, we have the following commutative diagram.
\begin{equation}\label{eq_assoc_flag_bundle_diagram}
\begin{tikzcd}[row sep=0.3cm, column sep=0.3cm]
 & {} %q_{m}^{{\ast}}E_{j+1} \arrow[dd] \ar[ld]  
& & q_{m-1}^{{\ast}}E_{m} \arrow[dd] \arrow[ld]  
&& && q_1^{\ast} E_2  \arrow[dd] \arrow[ld]
&& q_0^{\ast} E_1\arrow[dd] \arrow[ld]
\\
F_{m} \arrow[dd, "{q_{m}}"] \arrow[rr,"  p_{m}"] 
& & F_{m-1}\arrow[dd,"q_{m-1}"] \arrow[rr,"\quad \quad p_{m-1}",
crossing over]
&& \cdots \arrow[rr, "p_2"] 
&& F_1 \arrow[rr, "\quad \quad p_1", crossing over] \arrow[dd, "q_{1}"]
&& F_0 \arrow[dd, "q_0=\text{id}"]
\\
& {}
& & E_{m} \arrow[ld] 
&& && E_2 \arrow[ld]
&& E_1 \arrow[ld]	  \\
 B_{m}\arrow[rr,"  \pi_{m}"] 
& & B_{m-1} \arrow[rr,"  \pi_{m-1}"] 
&& \cdots \arrow[rr,"\pi_2"]
&& B_1 \arrow[rr, "\quad \quad \pi_1"]
&& B_0
\end{tikzcd}
%\begin{tikzcd}
%F_m \arrow[r, "p_m"] \arrow[d, "q_m"]
%& F_{m-1} \arrow[r, "p_{m-1}"] \arrow[d, "q_{m-1}"]
%& \cdots \arrow[r, "p_2"]
%& F_1 \arrow[r, "p_1"] \arrow[d, "q_1"]
%& F_0 \arrow[d, "q_0 = \text{id}"]
%\\
%B_m \arrow[r, "\pi_m"]
%& B_{m-1} \arrow[r, "\pi_{m-1}"]
%& \cdots \arrow[r, "\pi_2"]
%& B_1 \arrow[r, "\pi_1"]
%& B_0
%\end{tikzcd}
\end{equation}
Indeed, the associated flag Bott tower $F_{\bullet}$ can be constructed inductively as follows. For each $1 \leq j \leq m$, consider the following pull-back diagram.
\[
\begin{tikzcd}
q_{j-1}^\ast E_{j} \arrow[r, "\widetilde{q}_{j-1}"]
	\arrow[d, ""{name=U}] & 
E_{j} \arrow[d, ""{name=D}]\\
F_{j-1} \arrow[r,"q_{j-1}"]& B_{j-1} 
\arrow[draw=none,"\text{\large{$\circlearrowright$}}" description,from=U,to=D]
\end{tikzcd}
\]
By flagifying each fiber of the above bundles,
 we obtain the associated pull back diagram of flag bundles.
\[
\begin{tikzcd}
F_j:= \flag(q_{j-1}^\ast E_{j}) 
\arrow[r, "\widetilde{q}_{j-1}"]
\arrow[d, "p_j"{name=U}]
\arrow[rr, bend left, "q_j"]
& \flag(E_{j})
\arrow[r, "s_j"] 
\arrow[d, ""{name = D}]
&\mathbb{P}(E_{j})=B_j 
\arrow[dl, "\pi_j"]\\
F_{j-1} 
\arrow[r, "q_{j-1}"]
& B_{j-1} 
\arrow[draw=none,"\text{\large{$\circlearrowright$}}" description,from=U,to=D]
\end{tikzcd}
\]
Then  $F_{j}$ is the total space of $\flag(q_{j-1}^\ast E_{j})$, and $q_{j} := s_j \circ 
\widetilde{q}_{j-1}$. Here, the
map $s_{j} \colon \flag(E_{j}) \to \mathbb{P}(E_{j})$
sends each fiberwise full flag $V_{\bullet} = 
( V_1 \subsetneq V_2 \subsetneq \cdots \subsetneq
V_{n_{j}} \subsetneq (E_j)_p)$ to the element $V_1$ in 
$\mathbb{P}((E_j)_p)$ for $p \in B_{j-1}$.

%%%%%%%%%%%%%%%%%%%%%%%%%%%%%%%%%%%%%%%%%%%%%%%%%%%%%%%%%%%%%%%%%%%%%%%%
\section{Generic orbit closures in the associated flag Bott manifolds}
\label{sec_generic_orbit_closure_in_asso_fBM}
%%%%%%%%%%%%%%%%%%%%%%%%%%%%%%%%%%%%%%%%%%%%%%%%%%%%%%%%%%%%%%%%%%%%%%%%
For an $m$-stage generalized Bott manifold $B_m$, let 
$F_m$ be its associated flag Bott manifold with the effective canonical
action of $\mathbf H$ defined in Subsection~\ref{sec_torus_action_fBT}.
In this section, we study the closure of a generic orbit of
the torus $\mathbf H$ in the associate flag Bott manifold $F_m$ and 
its relation with $B_m$ in Theorem~\ref{thm_main_thm_2}. 
For this, we first review combinatorics
of permutohedral varieties.

%%%%%%%%%%%%%%%%%%%%%%%%%%%%%%%%%%%%%%%%%%%%%%%%%%%%%%%%%%%%%%%%%%%%%%%%
\subsection{Permutohedral varieties}
\label{subsec:permutohedron}
%%%%%%%%%%%%%%%%%%%%%%%%%%%%%%%%%%%%%%%%%%%%%%%%%%%%%%%%%%%%%%%%%%%%%%%%
The closure $X_n$ of a generic orbit in the flag variety  
$\flag(n+1)$ with the effective action of $(\Cstar)^n$
as in Example~\ref{exampel_torus_action_flag}
is a toric variety called the \defi{permutohedral variety}; see for instance \cite{Klya85} and \cite{June14}. 
In this subsection, we recall the fan $\Sigma_{n} \subset \mathbb{R}^{n}$ of the permutohedral variety. 
Note that the fan $\Sigma_n$ is the normal fan of an $n$-dimensional permutohedron $P_n$ with particular outward normal vectors. To be more precise, there is a bijection between the set $\Sigma_{n}(1)$ of rays and nonempty proper subsets of $[n+1]$:
\[
\Sigma_{n}(1) \stackrel{1-1}{\longleftrightarrow} \{ A \mid \emptyset \subsetneq A \subsetneq [n+1] \}.
\]
For a nonempty proper subset $A$ of $[n+1]$, the corresponding ray $\rho_A$ is generated by
\begin{equation}\label{eq_u_A}
u_A := \begin{cases}
\displaystyle\sum_{x \in A} \varepsilon_x & \text{ if } n+1 \notin A, \\
\displaystyle-\sum_{x \in [n+1] \setminus A} \varepsilon_x & \text{ otherwise},
\end{cases}
\end{equation}
where $\{\varepsilon_1,\dots,\varepsilon_n\}$ is the standard basis vector of $\mathbb{R}^n$. 
Hence there are $2^{n+1}-2$ many rays in $\Sigma_{n}$.
The minimal generator in the intersection of a ray and the underlying lattice is called the \defi{ray generator}.
We note that $u_A$ defined in~\eqref{eq_u_A} is the ray generator of $\rho_A$. 

The maximal cones are indexed by proper chains of $n$ nonempty proper subsets of $[n+1]$. For a proper chain 
\begin{equation}\label{eq_chain_A}
A_{\bullet} : \emptyset \subsetneq A_1 \subsetneq A_2 \subsetneq \cdots \subsetneq A_n \subsetneq [n+1]
\end{equation}
of nonempty proper subsets, we have the corresponding maximal cone
\[
\Cone(u_{A_1},u_{A_2},\dots,u_{A_n}).
\]
Therefore the number of maximal cones is $(n+1)!$. 

%There is a natural correspondence between a proper chain in~\eqref{eq_chain_A} and an element of the symmetric group S_{n+1}. 
Moreover we have a correspondence between the maximal cones in $\Sigma_n$ and the elements of the symmetric group $\mathfrak{S}_{n+1}$. For a permutation $w = (w(1) \ \cdots \ w(n+1))$ in $\mathfrak{S}_{n+1}$, we associate a maximal cone in $\Sigma_n$ determined by the chain $A_{\bullet}$ where
\begin{equation}\label{eq_chain_Ak}
A_k := \{w(n+2-k),\dots,w(n+1)\} \quad \text{ for } 1 \leq k \leq n.
\end{equation}
This description is sometimes much convenient 
to see the combinatorics of $\Sigma_n$. For 
instance, two maximal cones corresponding to permutations $v$ and $w$ in $\mathfrak{S}_{n+1}$ are adjacent if and only if there exists $i \in [n]$ such that $v = w \cdot s_i$, where $s_i$ is the transposition $(i,i+1) \in \mathfrak{S}_{n+1}$.

\begin{example}
	When $n = 2$, Figure~\ref{fig_ray} represents ray generators in $\Sigma_2$.
	Consider a permutation $(231) \in \mathfrak{S}_3$. 
	Then the corresponding chain $A_{\bullet}$ defined in~\eqref{eq_chain_Ak} is 
	\[
	A_{\bullet} \colon \emptyset \subsetneq \{1\} \subsetneq \{1,3\} \subsetneq \{1,2,3\}.
	\]
	Hence the permutation $(231)$ defines a maximal cone 
	$\Cone(u_{\{1\}},u_{\{1,3\}})$.
	As permutations $(231)$ and $(321)$ satisfy the relation
	$(231) = (321) \cdot s_1$, two maximal cones $\Cone(u_{\{1\}},u_{\{1,3\}})$ and $\Cone(u_{\{1\}},u_{\{1,2\}})$ are adjacent.
	Figure~\ref{fig_m_cones} describes the maximal cones in $\Sigma_2$. \hfill \qed
\end{example}
\begin{figure}[H]
	\begin{subfigure}[b]{0.5\textwidth}
		\centering
		\begin{tikzpicture}
		\draw[->] (0,0)--(0,1) node[above] {$u_{\{2\}}=\ep_2$};
		\draw[->] (0,0) -- (1,0) node[right] {$u_{\{1\}}=\ep_1$};
		\draw[->] (0,0) -- (1,1) node[right] {$u_{\{1,2\}}=\ep_1+\ep_2$};
		\draw[->] (0,0) -- (-1,0) node[left] {$u_{\{2,3\}}=-\ep_1$};
		\draw[->] (0,0) -- (0,-1) node[below right] {$u_{\{1,3\}}=-\ep_2$};
		\draw[->] (0,0) -- (-1,-1) node[below left] {$u_{\{3\}}=-\ep_1-\ep_2$};
		\end{tikzpicture}
		\caption{Ray generators in $\Sigma_{2}$.}
		\label{fig_ray}
	\end{subfigure}%
	\begin{subfigure}[b]{0.5\textwidth}
		\centering
		\begin{tikzpicture}
		\fill[pattern color = black!10!white, pattern = vertical lines] (0,0)--(2,0)--(2,2)--cycle;
		\fill[pattern color = black!10!white, pattern = north east lines] (0,0) -- (-2,0) -- (-2,2)--(0,2) -- cycle;
		\fill[pattern color = black!10!white, pattern = horizontal lines] (0,0)--(2,2)--(0,2)--cycle;
		\fill[pattern color = black!10!white, pattern = north east lines] (0,0)--(2,0) -- (2,-2) -- (0,-2) -- cycle;
		\fill[pattern color = black!10!white, pattern = horizontal lines] (0,0) -- (0,-2) -- (-2,-2) --cycle;
		\fill[pattern color = black!10!white, pattern = vertical lines] (0,0) -- (-2,-2)--(-2,0) --cycle;
		
		\draw[gray] (0,0) -- (0,2);
		\draw[gray] (0,0) -- (2,0);
		\draw[gray] (0,0) -- (2,2);
		\draw[gray] (0,0) -- (-2,0);
		\draw[gray] (0,0) -- (-2,-2);
		\draw[gray] (0,0) -- (0,-2);
		
		\node at (0.75,1.5) {\scriptsize $\{2\} \subsetneq \{1,2\}$};
		\node at (0.5,1) {\scriptsize \textcolor{red}{$(312)$}};
		\node at (1.25,0.3) {\scriptsize $\{1\} \subsetneq \{1,2\}$};
		\node at (1.5,0.8) {\scriptsize \textcolor{red}{$(321)$}};
		\node at (1.25,-1) {\scriptsize $\{1\} \subsetneq \{1,3\}$};
		\node at (1.25, -0.5) {\scriptsize \textcolor{red}{$(231)$}};
		\node at (-0.75,-1.5) {\scriptsize $\{3\} \subsetneq \{1,3\}$};
		\node at (-0.5,-1) {\scriptsize \textcolor{red}{$(213)$}};
		\node at (-1.25,-0.3) {\scriptsize $\{3\} \subsetneq \{2,3\}$};
		\node at (-1.5,-0.8) {\scriptsize \textcolor{red}{$(123)$}};
		\node at (-1.25,1) {\scriptsize $\{2\} \subsetneq \{2,3\}$};
		\node at (-1.25,0.5) {\scriptsize \textcolor{red}{$(132)$}};
		
		\draw[thick,->] (0,0)--(0,1) ;
		\draw[thick,->] (0,0) -- (1,0) ;
		\draw[thick,->] (0,0) -- (1,1) ;
		\draw[thick,->] (0,0) -- (-1,0) ;
		\draw[thick,->] (0,0) -- (0,-1) ;
		\draw[thick,->] (0,0) -- (-1,-1) ;
		
		\end{tikzpicture}
		\caption{Maximal cones in $\Sigma_{2}$.}
		\label{fig_m_cones}
	\end{subfigure}
	\caption{Fan $\Sigma_{2}$.}
	\label{fig_fan_sigma_2}
\end{figure}
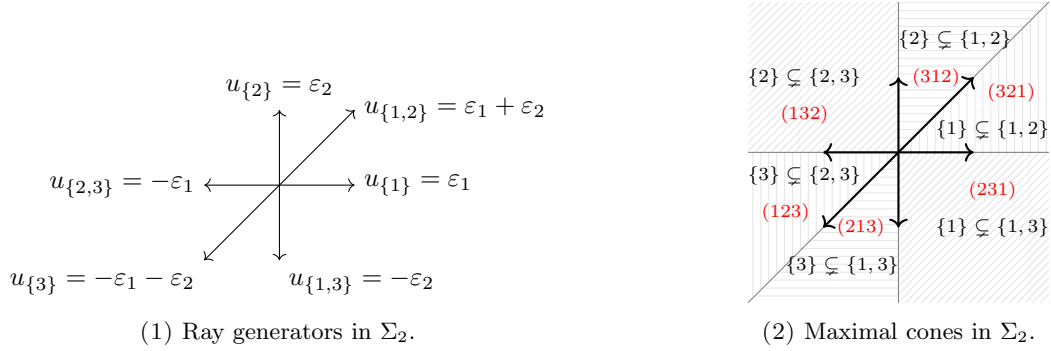

\begin{remark}\label{rmk:permuto_and_simplex}
	Let $\Sigma'_n \subset \mathbb{R}^n$ be the fan of complex projective space $\C P^n$ whose ray generators $u_1,\dots,u_{n+1}$ are given by
	\[
	u_k = 
	\begin{cases}
	\ep_k & \text{ if } 1 \leq k \leq n, \\
	-\ep_1 - \cdots - \ep_n & \text{ if } k = n+1
	\end{cases}
	\]
	Then the set of cones in $\Sigma'_n$ can be identified with the set of nonempty proper 
	subsets of $[n+1]$. To be more precise, for any  
	dimension $d$ cone $\tau$ in $\Sigma'_n$,
	we have a subset $\{i_1,\dots,i_d\} \subset [n+1]$ such that 
	\[
	\tau = \Cone(u_{i_1},\dots,u_{i_d}).
	\]
	It is well known that the fan $\Sigma_n \subset \mathbb{R}^n $ of the permutohedral variety can be obtained 
	from $\Sigma'_n$ by star subdivisions of all cones of dimension grater than 0
	in the decreasing order of the dimensions of the cones (see~\cite{Proc90}). 
	Hence, the set of rays in the fan $\Sigma_n$ corresponds bijectively to the set of all cones of dimension grater than 0 in $\Sigma'_n$. \hfill \qed
\end{remark}

%%%%%%%%%%%%%%%%%%%%%%%%%%%%%%%%%%%%%%%%%%%%%%%%%%%%%%%%%%%%%%%%%%%%%%%%
\subsection{The main result on generic orbit closures in $F_m$}
\label{subsec_main_thm}
%%%%%%%%%%%%%%%%%%%%%%%%%%%%%%%%%%%%%%%%%%%%%%%%%%%%%%%%%%%%%%%%%%%%%%%%
Consider the canonical effective $\mathbf H$-action on $F_m$ 
defined in Subsection~\ref{sec_torus_action_fBT}.
In order to consider the closure of a generic $\mathbf H$-orbit in $F_m$, we 
first define a generic element in $F_m$. 
Let $g=(g_{ij})$ be an element in $\GL(n+1)$.
For an ordered sequence $1 \leq i_1 < i_2 <
\cdots < i_k \leq n+1$, we consider the Pl\"{u}cker coordinate
\[
X_{i_1,\dots,i_k}(g) := \det ((g_{i_p,p})_{1 \leq p \leq k}).
\]
\begin{definition}
We call an element $g \in \GL(n+1)$ \defi{generic}
if $X_{i_1,\dots,i_k}(g)$ is nonzero for any $k 
\in [n+1]$ and  ordered sequence 
$1 \leq i_1 < i_2 < \cdots < i_k \leq n+1$. 
We call a point $[g_1,\dots,g_m]$ in $F_m$ is 
\defi{generic} if $g_j \in \GL(n_j+1)$ is generic
for $j = 1,\dots,m$. 
\end{definition}
For example, $g=\begin{bmatrix}
1 & 0 \\ 0 & 1
\end{bmatrix}$ is not a generic element since
$X_2(g) = 0$. But $g = \begin{bmatrix}
1 & 0 \\ 1 & 1
\end{bmatrix}$ is generic. 
The above definition of generic elements can be found in 
\cite{FlHa91}, \cite{Klya95}, and \cite{Dabr96}.
It is not difficult to show that the genericity of a point  $[g_1,\dots,g_m]$
in $F_m$ does not depend on the representative of a point.

A \defi{generic orbit} in $F_m$ is 
the $\mathbf H$-orbit of a generic point. 
In Theorem~\ref{thm_main_thm_2} we give a relation between 
a generalized Bott manifold $B_m$ and the closure of a generic orbit of 
$\mathbf H$ in its associated flag Bott manifold~$F_m$, which extends the relation between $\C P^{n}$, as an 
1-stage generalized Bott manifold, and the 
$n$-dimensional permutohedral variety (see~Remark~\ref{rmk:permuto_and_simplex}).

\begin{theorem}\label{thm_main_thm_1}
Let $B_m$ be an $m$-stage generalized Bott manifold 
determined by an integer matrix
$\Lambda$ as in~\eqref{eq_red_vector_char_ftn_of_gBT} and
let $F_m$ be the associated $m$-stage flag Bott manifold. 
Then  the closure of a generic orbit of 
$\mathbf H$ in the associated flag Bott manifold $F_m$ is a nonsingular projective toric variety whose fan $\Sigma$ is given as follows: 
\begin{enumerate}
	\item the rays are parametrized by the set 
	\[
	\{(\ell,A) \mid \emptyset \subsetneq A \subsetneq [n_{\ell}+1], 1 \leq \ell \leq m\}.
	\] 
	For $(\ell,A)$ the corresponding ray is generated by the vector
	\[
	u^{\ell}_A = 
	\begin{cases}
	\mathlarger\sum_{x \in A} \ep_{\ell,x} & \text{ if }	
	n_{\ell}+1 \notin A, \\
	\mathlarger-\sum_{x \in [n_{\ell}+1]\setminus A} \ep_{\ell,x}
	+\mathlarger\sum_{j=\ell+1}^{m} \mathlarger\sum_{k=1}^{n_j} a^j_{k,\ell} \ep_{j,k}
	& \text{ otherwise}
	\end{cases}
	\]
	where $\{\ep_{j,k}\}$ is the standard basis of 
	the Lie algebra of the compact torus $\mathbf T \subset \mathbf H$	whose dual is the standard basis $\{\ep_{j,k}^{\ast}\}$ of $\Lie(\mathbf{T})^{\ast}$.
	\item The maximal cones are indexed by the sequences of proper chains of subsets 
	\[
	\{(A^1_{\bullet},\dots,A^m_{\bullet}) \mid A^{\ell}_{\bullet} = (\emptyset \subsetneq A^{\ell}_1 \subsetneq A^{\ell}_2 \subsetneq \cdots \subsetneq A^{\ell}_{n_{\ell}} \subsetneq [n_{\ell}+1]), 1 \leq \ell \leq m\}.
	\]
	For $(A^1_{\bullet},\dots,A^m_{\bullet})$, the corresponding maximal cone is defined to be
	\[
	\textup{Cone}\left(\bigcup_{\ell=1}^m \{u^{\ell}_{A_1^{\ell}},\dots,u^{\ell}_{A_{n_{\ell}}^{\ell}}\}\right).
	\]
\end{enumerate}
\end{theorem}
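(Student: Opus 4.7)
The plan is to show that the closure $X$ of a generic $\mathbf H$-orbit in $F_m$ is a smooth projective toric variety with the prescribed fan $\Sigma$, by extracting the rays at each $\mathbf T$-fixed point from the GKM data of Theorem~\ref{thm_GKM_of_Fm}. First I would observe that $X$, being an irreducible projective subvariety on which $\mathbf H$ acts with a dense orbit, is automatically a normal projective toric variety of dimension $\dim \mathbf H = n_1+\cdots+n_m$; smoothness will follow a posteriori once Step~2 produces an explicit $\Z$-basis of the character lattice at each fixed point. Since a generic $\mathbf H$-orbit degenerates toward every $\mathbf T$-fixed point of $F_m$, one has $X^{\mathbf T} = F_m^{\mathbf T} = \prod_{j=1}^m \mathfrak S_{n_j+1}$ by Proposition~\ref{prop_fixed_points}, so the fan of $X$ has exactly one maximal cone $\sigma_{\mathbf w}$ per element $\mathbf w = (w_1,\ldots,w_m)$.

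Next I would identify $T_{\mathbf w} X \subset T_{\mathbf w} F_m$ as the sum of the $n_j$ adjacent-transposition weight subspaces
\[
V\bigl(\ro{j}{i+1} - \ro{j}{i}\bigr), \qquad j = 1,\ldots,m,\ \ i = 1,\ldots,n_j,
\]
where $\ro{j}{i}$ is the $i$-th row of the weight matrix in Theorem~\ref{thm_GKM_of_Fm}, reduced using~\eqref{eq_effective_basis}. The $m=1$ case recovers the tangent weights of the permutohedral variety at the maximal cone indexed by $w_1$ as in Subsection~\ref{subsec:permutohedron}. For general $m$ I would proceed by induction on the height: the flag-bundle structure $F_m \to F_{m-1}$ restricts over each fixed point $[\dw_1,\ldots,\dw_{m-1}]$ of $F_{m-1}$ to a copy of $\flag(n_m+1)$, whose intersection with $X$ is a permutohedral variety in that fiber, and the induction propagates this identification stage by stage.

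Given these tangent weights, the rays of $\sigma_{\mathbf w}$ in the cocharacter lattice $N$ are their dual basis in $M = \chi(\mathbf T)$, and the main computation is to verify they coincide with the vectors $u^{\ell}_{A^{\ell}_k}$, where $A^{\ell}_k = \{w_\ell(n_\ell+2-k),\ldots,w_\ell(n_\ell+1)\}$ is the chain of~\eqref{eq_chain_Ak}. Here I would invoke the special form of $\A{j}{\ell}$ from Proposition~\ref{prop:lambda_of_asso_gBT}, in which only the first column is nonzero, with entries $(\mathbf a^j_\ell,0)^T$. The telescoping formula~\eqref{eq_def_of_Xjl} for $\X{j}{\ell}$ then collapses: each term $(B_j \A{j}{i_r})\cdots (B_{i_1} \A{i_1}{\ell}) B_\ell$ has only one potentially nonzero column, whose entries are explicit products $a^j_{\ast,i_r}a^{i_r}_{1,i_{r-1}}\cdots a^{i_1}_{1,\ell}$ governed by the row-permutations $B_{i_\ast}$. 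A direct pairing computation then verifies duality, with the two cases $n_\ell+1\in A$ and $n_\ell+1\notin A$ in the formula for $u^{\ell}_A$ forced by the relation $\ep_{\ell,n_\ell+1}^\ast = 0$ of~\eqref{eq_effective_basis}, and the correction $\sum_{j>\ell}\sum_k a^j_{k,\ell}\,\ep_{j,k}$ arising precisely to neutralize the pairing of $\ro{j}{i+1}-\ro{j}{i}$ with the off-diagonal blocks $\X{j}{\ell}$ for $j>\ell$. Completeness of $\Sigma$ then follows from the projectivity of $X$.

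The main obstacle I expect is this duality verification, particularly in the case $n_\ell+1\in A$: one must simultaneously track, for all $j>\ell$, the telescoping sums over chains $\ell<i_1<\cdots<i_r<j$ and the interaction of the row-permutations $B_{i_\ast}$ with the single-column shape of $\A{j}{\ell}$, so that after summation the off-diagonal contributions cancel exactly against the $a^j_{k,\ell}\,\ep_{j,k}$ correction. A secondary concern is justifying the adjacent-transposition identification of $T_{\mathbf w}X$ in Step~2 without circularly assuming the fan of $X$; an induction on $m$ combined with the classical $m=1$ case should suffice, but requires a careful $\mathbf T$-equivariant compatibility check along the fibration $X \to X_{m-1}$.
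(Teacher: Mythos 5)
Your high-level strategy matches the paper's: read off the ray generators at each $\mathbf T$-fixed point from the GKM axial functions via duality, and leverage the iterated flag-bundle structure to control the combinatorics. The paper implements these two ingredients through Lemma~\ref{lemma_orbit_of_X} (the join decomposition $\Sigma=\widetilde{\Sigma}_{n_1}\bigcdot\cdots\bigcdot\Sigma_{n_m}$, obtained by iterating the toric-fibration criterion of Proposition~\ref{lem_toric_var_fibration}) and through Lemmas~\ref{lemma:char_ftn_and_axial_ftn} and~\ref{lemma:independence of choice of vertex}.

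Three places where your plan is thinner than the paper. First, the assertion $X^{\mathbf T}=F_m^{\mathbf T}$ is not delivered by Proposition~\ref{prop_fixed_points}, which describes only $F_m^{\mathbf T}$; you would need an argument that the closure of a generic orbit meets every torus fixed point. The paper gets this (and, equally importantly, the adjacency structure of the maximal cones, which you also need to know you have a fan) as an output of Lemma~\ref{lemma_orbit_of_X}. Second, you omit the well-definedness step: the paper proves (Lemma~\ref{lemma:independence of choice of vertex}) that the ray generator computed by duality at one maximal cone is independent of which cone containing $\rho$ is used, and then performs the pairing check only at a single convenient fixed point, namely the one indexed by $v_{\ell,A}$ of~\eqref{eq_def_of_v_ell} (identity in the other coordinates). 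Your plan, as written, would carry out the duality computation at every $\mathbf w$, which is a strictly harder verification and in particular involves arbitrary $B_j$ for $j\neq\ell$ rather than identity matrices; either you need the independence lemma or you must face that heavier computation. Third, the ``direct pairing computation'' you describe as the main obstacle is exactly where the paper spends its effort: the three cases $j<\ell$, $j=\ell$, $j>\ell$ (with subcases for $n_\ell+1\in A$ or not), and in the case $j>\ell$, $n_\ell+1\in A$, the telescoping identity $\X{j}{\ell}B_{\ell}^{-1}=\sum_{p=\ell+1}^{j-1}\X{j}{p}\A{p}{\ell}+\A{j}{\ell}$ is the crucial cancellation mechanism. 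You correctly anticipate this but do not carry it out. You also write that the rays of $\sigma_{\mathbf w}$ are ``their dual basis in $M$''; they live in $N$, dual to the weights in $M$. Your observation that smoothness falls out once each maximal cone has $\Z$-basis generators is sound and matches the paper's determinant argument at the end.
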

The proof of Theorem~\ref{thm_main_thm_1} needs a series of lemmas,
and will be given in the next subsection.
The following corollary will play an important role in the proof of Theorem~\ref{thm_main_thm_2}. 
\begin{corollary}\label{cor_char_vectors_are_obtained_by_sum}
	For each $1 \leq \ell \leq m$ and a nonempty proper subset
	$\emptyset \subsetneq 
	A \subsetneq [n_{\ell}+1]$, we have the following relation:
\begin{equation}\label{eq_u_ell_A_sum}
u^{\ell}_A = \sum_{x \in A} u^{\ell}_{\{x\}}.
\end{equation}
Furthermore, for $x \in [n_{\ell}+1]$, the ray generator
$u^{\ell}_{\{x\}}$ coincides with the ray generator $u^{\ell}_x$ in the fan $\Sigma'$
of the generalized Bott manifold $B_m$.
\hfill \qed
\end{corollary}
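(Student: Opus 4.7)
The proof is a direct verification from the explicit ray formulas of Theorem~\ref{thm_main_thm_1} and the description of the fan $\Sigma'$ of $B_m$ in Subsection~\ref{subsec_gBT}, so my plan is essentially to split on whether $n_\ell+1 \in A$ and unwind the two definitions.

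For the first statement I will handle two cases. If $n_\ell+1 \notin A$, then for each $x \in A$ we have $n_\ell+1 \notin \{x\}$, so $u^\ell_{\{x\}} = \ep_{\ell,x}$ by Theorem~\ref{thm_main_thm_1}(1), and summing over $x \in A$ gives exactly $\sum_{x\in A}\ep_{\ell,x} = u^\ell_A$. If instead $n_\ell+1 \in A$, I will separate the single vector $u^\ell_{\{n_\ell+1\}} = -\sum_{x=1}^{n_\ell}\ep_{\ell,x} + \sum_{j=\ell+1}^{m}\sum_{k=1}^{n_j} a^j_{k,\ell}\ep_{j,k}$ from the sum $\sum_{x \in A\setminus\{n_\ell+1\}}u^\ell_{\{x\}} = \sum_{x \in A\setminus\{n_\ell+1\}}\ep_{\ell,x}$. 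Adding these and using $[n_\ell+1]\setminus A = [n_\ell]\setminus A$ (which holds precisely because $n_\ell+1 \in A$) yields the claimed expression for $u^\ell_A$ on the nose.

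For the second statement I will simply compare formulas. For $1 \leq x \leq n_\ell$, Theorem~\ref{thm_main_thm_1}(1) gives $u^\ell_{\{x\}} = \ep_{\ell,x}$, which is exactly the ray generator $u^\ell_x$ for $B_m$ described in Subsection~\ref{subsec_gBT}. For $x = n_\ell+1$, the vector $u^\ell_{\{n_\ell+1\}}$ computed above agrees column-by-column with the $\ell$-th column of $\Lambda$ in~\eqref{eq_red_vector_char_ftn_of_gBT}: the $\ell$-th block contributes $-\mathbf{1}$, which encodes $-\sum_{x=1}^{n_\ell}\ep_{\ell,x}$; each block $j > \ell$ contributes $\mathbf{a}^{j}_{\ell}$, which encodes $\sum_{k=1}^{n_j}a^j_{k,\ell}\ep_{j,k}$; and each block $j < \ell$ contributes $\mathbf{0}$. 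So $u^\ell_{\{n_\ell+1\}}$ is precisely the ray generator $u^\ell_{n_\ell+1}$ of $\Sigma'$ as defined in Subsection~\ref{subsec_gBT}.

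There is no real obstacle here: the corollary is meant to be a bookkeeping lemma that repackages the rays of Theorem~\ref{thm_main_thm_1} in a form convenient for comparing $\Sigma$ with the fan $\Sigma'$ of $B_m$. The only thing to be careful about is the sign convention in the case $n_\ell+1 \in A$, since the formula for $u^\ell_A$ switches from a positive sum over $A$ to a negative sum over the complement of $A$; the identity $[n_\ell+1]\setminus A = [n_\ell]\setminus A$ in this case makes the telescoping work and is the only step where one could easily miscount.
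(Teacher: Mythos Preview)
Your proof is correct and follows essentially the same approach as the paper: both split on whether $n_\ell+1 \in A$, compute $u^\ell_{\{n_\ell+1\}}$ explicitly from Theorem~\ref{thm_main_thm_1}(1), and use the identity $[n_\ell+1]\setminus A = [n_\ell]\setminus A$ (valid when $n_\ell+1\in A$) to collapse the sum; the only difference is organizational, as the paper interleaves the verification of the two claims while you treat them sequentially.
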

\begin{proof}
First we notice that $u^{\ell}_{\{x\}} = \varepsilon_{\ell,x} = u^{\ell}_x$ if $x \neq n_{\ell}+1$.
Hence we get the equality~\eqref{eq_u_ell_A_sum} when $n_{\ell}+1 \notin A$. 
On the other hand, we have that
\[
u^{\ell}_{\{n_{\ell+1}\}} = -\sum_{x \in [n_{\ell}]} \varepsilon_{\ell,x} 
+ \sum_{j= \ell+1}^m \sum_{k=1}^{n_j} a^j_{k,\ell} \varepsilon_{j,k} = u^{\ell}_{n_{\ell}+1}.
\]
When $n_{\ell}+1 \in A$, we get that
\begin{align*}
\sum_{ x \in A} u^{\ell}_{\{x\}}
&= u^{\ell}_{\{n_{\ell}+1\}} + \sum_{x \in A \setminus \{n_{\ell}+1\}} u^{\ell}_{\{x\}} \\
&= -\sum_{x \in [n_{\ell}]} \varepsilon_{\ell,x} 
+ \sum_{j= \ell+1}^m \sum_{k=1}^{n_j} a^j_{k,\ell} \varepsilon_{j,k}
+ \sum_{x \in A \setminus \{n_{\ell}+1\}} \varepsilon_{\ell,x} \\
&= - \sum_{x \in [n_{\ell}+1] \setminus A } \varepsilon_{\ell,x} + \sum_{j= \ell+1}^m \sum_{k=1}^{n_j} a^j_{k,\ell} \varepsilon_{j,k} \\
&= u^{\ell}_A. \qedhere
\end{align*}
\end{proof}

\begin{example}\label{example_afBT_char_vector}
Let $B_3$ be a generalized Bott tower of height $3$ 
as in Example~\ref{example_gBT_afBT} whose matrix $\Lambda$ 
is given by 
\[
\Lambda
= \begin{bmatrix}
-1 & 0 & 0 \\
-1 & 0 & 0 \\
a^2_{1,1} & -1 & 0 \\
a^{3}_{1,1} & a^3_{1,2} & -1 \\
a^3_{2,1} & a^3_{2,2} & -1
\end{bmatrix}.
\]
Let $F_3$ be the associated flag Bott manifold, and let $X$ be 
the closure of a generic orbit of the torus $(\C^{\ast})^5$. 
Then the fan $\widetilde{\Sigma}$ of $X$ has 14 rays. Consider the ray generator $u^1_{\{3\}}$. Then by Theorem~\ref{thm_main_thm_1},
the vector $u^1_{\{3\}}$ is 
\[
\sum_{x \in [3]\setminus \{3\}} -\ep_{1,x} 
+ \sum_{j=2}^3 \sum_{k=1}^{n_j} 
a^j_{k,1} \ep_{j,k} 
= -\ep_{1,1} -\ep_{1,2}
+ a^2_{1,1}\ep_{2,1} + a^3_{1,1} \ep_{3,1}
+a^3_{2,1} \ep_{3,2},
\]
where $\{\ep_{1,1},\ep_{1,2},\ep_{2,1},\ep_{3,1},\ep_{3,2}\}$ is
the standard basis of the Lie algebra of the compact torus
contained in $(\Cstar)^5$. With this standard basis, we have the following ray generators.
\[
\begin{array}{lll}
u^1_{\{1\}} = (1,0,0,0,0),&
u^1_{\{2\}} = (0,1,0,0,0),&
u^1_{\{3\}}= (-1,-1,a^2_{1,1},a^3_{1,1},a^3_{2,1}),\\
u^1_{\{1,2\}} = (1,1,0,0,0), &
u^1_{\{1,3\}} = (0,-1,a^2_{1,1},a^3_{1,1},a^3_{2,1}),&
u^1_{\{2,3\}} =(-1,0,a^2_{1,1},a^3_{1,1},a^3_{2,1}),\\
u^2_{\{1\}}= (0,0,1,0,0),& 
u^2_{\{2\}} = (0,0,-1,a^3_{1,2},a^3_{2,2}), &\\
u^3_{\{1\}} = (0,0,0,1,0), &
u^3_{\{2\}}= (0,0,0,0,1),&
u^3_{\{3\}} = (0,0,0,-1,-1),\\
u^3_{\{1,2\}}= (0,0,0,1,1), &
u^3_{\{1,3\}}=(0,0,0,0,-1), &
u^3_{\{2,3\}}=(0,0,0,-1,0).
\end{array}
\]
For a subset $\{1,3\} \subset [3]$, the ray generator $u^1_{\{1,3\}}$ is
$(0,-1,a^2_{1,1},a^3_{1,1},a^3_{2,1})$. Also, we have the following:
\[
\reqnomode
u^1_{\{1,3\}}
= (1,0,0,0,0) + (-1,-1,a^2_{1,1},a^3_{1,1},a^3_{2,1})
= u^1_{\{1\}} + u^1_{\{3\}}. \tag*{\qed}
\]
\end{example}

For a fan $\Sigma$ and a cone $\tau \in \Sigma$, 
we recall from~\cite[Definition 3.3.17]{CLS11} the definition of star subdivision $\Sigma^{\ast}(\tau)$ of $\Sigma$ along $\tau$.
Let $u_{\tau} = \sum_{\rho \in \tau(1)} u_{\rho}$, where $u_{\rho}$ is the ray generator of a ray $\rho$.
For each cone $\sigma \in \Sigma$ containing $\tau$, set
\[
\Sigma^*_{\sigma}(\tau) = \{\Cone(A) \mid A \subseteq \{u_{\tau}\} \cup \sigma(1), \tau(1) \nsubseteq A\}. 
\]
Then the \defi{star subdivision} $\Sigma^{\ast}(\tau)$ is defined to be
\[
\Sigma^*(\tau) = \{\sigma \in \Sigma \mid \tau \nsubseteq \sigma \} \cup \bigcup_{\tau \subseteq \sigma} \Sigma^*_{\sigma}(\tau).
\]
Hence the fan $\Sigma^{\ast}(\tau)$ has one more ray generated by the vector $u_{\tau}$.

Corollary~\ref{cor_char_vectors_are_obtained_by_sum} 
says that the set of ray generators
\[
\bigcup_{\ell=1}^m\big\{ u^{\ell}_{\{x\}} \mid x\in [n_{\ell}+1]\big\}
\]
can produce all other ray generators of the fan $\Sigma$, which yields the following property.
\begin{theorem}\label{thm_main_thm_2}
Let $B_m$ be the $m$-stage generalized Bott manifold 
determined by the integer matrix
$\Lambda$ as in~\eqref{eq_red_vector_char_ftn_of_gBT}, and
let $\Sigma'$ be the fan of $B_m$.
Let $F_m$ be the associated $m$-stage flag Bott manifold
to $B_m$. 
Then the fan~$\Sigma$ of the closure $X$ of a generic orbit of the canonical 
$\mathbf H$-action in the associated flag Bott manifold $F_m$ is the star subdivisions of $\Sigma'$ along the following cones
\[
\left\{\textup{Cone}\left(\{u^{\ell}_x \mid x \in A \}\right) \mid \emptyset \subsetneq A \subsetneq [n_{\ell}+1], 1 \leq \ell \leq m\right\} \subset \Sigma
\]
in the increasing order of $1 \leq \ell \leq m$ and in
the decreasing order of $|A|$.
\end{theorem}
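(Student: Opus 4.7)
The plan is to apply the prescribed sequence of star subdivisions to $\Sigma'$ directly, and compare the resulting fan ray-by-ray and cone-by-cone with the explicit description of $\Sigma$ furnished by Theorem~\ref{thm_main_thm_1}. The whole proof is really a globalization, across the $m$ levels of the tower, of the single-level identification between the projective space fan and the permutohedral fan recalled in Remark~\ref{rmk:permuto_and_simplex}.

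First I would match the rays. By Corollary~\ref{cor_char_vectors_are_obtained_by_sum}, the ray generators $u^{\ell}_{\{x\}}$ of $\Sigma$ coincide with the generators $u^{\ell}_{x}$ of $\Sigma'$, and for every nonempty proper $A \subsetneq [n_{\ell}+1]$ we have $u^{\ell}_{A} = \sum_{x \in A} u^{\ell}_{\{x\}}$. Now, by the definition of star subdivision, subdividing the fan along $\textup{Cone}(\{u^{\ell}_{x} \mid x \in A\})$ introduces exactly one new ray, generated by $\sum_{x \in A} u^{\ell}_{x} = u^{\ell}_{A}$. Running through all pairs $(\ell, A)$ therefore produces exactly the ray set listed in Theorem~\ref{thm_main_thm_1}(1), with no repetitions.

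Next I would verify the maximal cones by induction on $\ell$. Let $\Sigma_{0} = \Sigma'$ and let $\Sigma_{\ell}$ be the fan obtained from $\Sigma_{\ell-1}$ after performing star subdivisions along $\textup{Cone}(\{u^{\ell}_{x} \mid x \in A\})$ for all nonempty proper $A \subsetneq [n_{\ell}+1]$ in decreasing order of $|A|$. The inductive claim is that the maximal cones of $\Sigma_{\ell}$ are indexed by data
\[
(A^{1}_{\bullet},\dots,A^{\ell}_{\bullet},k_{\ell+1},\dots,k_{m}),
\]
where each $A^{\ell'}_{\bullet}$ is a complete flag of nonempty proper subsets of $[n_{\ell'}+1]$ and $k_{j} \in [n_{j}+1]$, the cone being the cone over $\{u^{\ell'}_{A^{\ell'}_{i}} \mid 1 \le \ell' \le \ell,\ 1 \le i \le n_{\ell'}\} \cup \{u^{j}_{x} \mid j > \ell,\ x \ne k_{j}\}$. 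The base case is exactly the description of $\Sigma'$ given in Subsection~\ref{subsec_gBT}. The inductive step reduces to the single-level statement: starting from a cone of $\Sigma_{\ell-1}$ that, restricted to level $\ell$, looks like the maximal cone of the fan of $\C P^{n_{\ell}}$ (with one vertex of the simplex omitted), the level-$\ell$ star subdivisions in decreasing order of $|A|$ refine it into cones labeled by complete flags in $[n_{\ell}+1]$. Since the new rays $u^{\ell}_{A}$ are spanned by level-$\ell$ rays and (for $n_{\ell}+1 \in A$) by level-$j$ rays with $j>\ell$, the subdivisions at level $\ell$ leave the level-$\ell'$ structure of the cones for $\ell' \ne \ell$ unaffected, so the induction combines across levels. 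This is precisely the content of Remark~\ref{rmk:permuto_and_simplex} applied in each level separately.

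The main obstacle is the combinatorial bookkeeping needed to make the two points of the preceding paragraph rigorous: namely (i) that at every intermediate step the cone $\textup{Cone}(\{u^{\ell}_{x} \mid x \in A\})$ still belongs to the current fan (this uses the decreasing-$|A|$ order together with the fact that a star subdivision only destroys cones that properly contain the subdivided one), and (ii) that the refined cones produced at level $\ell$, when intersected with the level-$\ell$ coordinate subspace, are exactly the maximal cones of the $n_{\ell}$-dimensional permutohedral fan; this is what allows one to replace ``a dropped vertex $k_{\ell}$'' by ``a complete flag $A^{\ell}_{\bullet}$'' in the indexing. Once these two points are checked, comparing $\Sigma_{m}$ with the description in Theorem~\ref{thm_main_thm_1}(2) finishes the proof.
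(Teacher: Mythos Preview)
Your proposal is correct and follows essentially the same strategy as the paper: match the rays via Corollary~\ref{cor_char_vectors_are_obtained_by_sum}, and then reduce to the level-by-level fact (Remark~\ref{rmk:permuto_and_simplex}) that the permutohedral fan is the iterated star subdivision of the $\C P^{n_{\ell}}$ fan. The difference is in how the ``levels are independent'' step is packaged. You argue it by a direct induction on $\ell$, carrying along the hybrid description of the maximal cones of the intermediate fan $\Sigma_{\ell}$ and checking by hand that the level-$\ell$ subdivisions leave the other levels untouched. The paper instead formalizes this once and for all: it writes both $\Sigma'$ and $\Sigma$ as joins $\widetilde{\Sigma}'_{n_1}\bigcdot\cdots\bigcdot\Sigma'_{n_m}$ and $\widetilde{\Sigma}_{n_1}\bigcdot\cdots\bigcdot\Sigma_{n_m}$ (via the toric fibration structure, Proposition~\ref{lem_toric_var_fibration}), and then proves a clean lemma (Lemma~\ref{lemma_join_star}) saying that star subdivision along a cone lying in one join factor commutes with joining the other factors. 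This lemma absorbs exactly the two bookkeeping points you flag as ``the main obstacle,'' so the paper's argument is shorter and avoids the explicit intermediate description of $\Sigma_{\ell}$. Your approach has the virtue of being more self-contained and not relying on the join formalism, at the cost of the combinatorics you identify but do not fully write out.
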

\begin{example}
	Let $B_3$ and $F_3$ be generalized Bott manifold and its associated flag Bott manifold given in Example~\ref{example_afBT_char_vector}. 
	To obtain the fan $\Sigma$ of the closure $X$ of a generic torus orbit in $F_3$ from the fan $\Sigma'$ of $B_3$, we consider the star subdivisions of $\Sigma'$ along the following cones in the listed order:
	\[
	\begin{split}
	&\{\Cone(\{u^{1}_x \mid x \in A\}) \mid \emptyset \subsetneq A \subsetneq [3], |A| = 2\} = \{\Cone(u^1_{1}, u^1_2), \Cone(u^1_1, u^1_3), \Cone(u^1_2,u^1_3)\},  \\
	&\{\Cone(\{u^{1}_x \mid x \in A\}) \mid \emptyset \subsetneq A \subsetneq [3], |A| = 1\} =  \{\Cone(u^1_1), \Cone(u^1_2), \Cone(u^1_3)\}, \\
	&\{\Cone(\{u^{2}_x \mid x \in A\}) \mid \emptyset \subsetneq A \subsetneq [2], |A| = 1\} = \{\Cone(u^2_1), \Cone(u^2_2)\}, \\
	&\{\Cone(\{u^{3}_x \mid x \in A\}) \mid \emptyset \subsetneq A \subsetneq [3], |A| = 2\} 
	 = \{ \Cone(u^3_1, u^3_2), \Cone(u^3_1,u^3_2), \Cone(u^3_2,u^3_3)\},  \\
	&\{\Cone(\{u^{3}_x \mid x \in A\}) \mid \emptyset \subsetneq A \subsetneq [3], |A| = 1\} 
	= \{\Cone(u^3_1), \Cone(u^3_2), \Cone(u^3_3)\}.
	\end{split}
	\]
\end{example}

To give a proof of Theorem~\ref{thm_main_thm_2}, we first review the following classical result about a toric variety fibration over 
a toric variety. We refer to \cite[Proposition~7.3]{Oda78Torus}, as well as 
\cite[Chapter~3.3]{CLS11}, \cite[Chapter~VI.6]{Ewa96}. 

\begin{proposition}\label{lem_toric_var_fibration}
	Let $\Sigma$ and $\Sigma'$ be complete fans in $N_\RR:=N\otimes_\Z \mathbb{R}$ and $N'_\RR:=N'\otimes_\Z \mathbb{R}$ for some lattices $N$ and $N'$ respectively, which are compatible with a surjective 
	$\Z$-linear map $\bar \phi\colon N \to N'$. 
	Let $\Sigma''$ be a subfan of $\Sigma$ consisting of the cones 
	$\{\sigma \in \Sigma \mid \sigma \subset \ker \bar \phi_\RR \}$ and $X_{\Sigma''}$
	the corresponding toric variety.
	Then, the toric morphism $\phi\colon X_\Sigma \to X_{\Sigma'}$ 
	induced from $\bar\phi$ is an equivariant fiber bundle with fiber $X_{\Sigma''}$
	if and only if 
	\begin{enumerate}
		\item there exists a lifting $\widetilde\Sigma \subseteq \Sigma$ of $\Sigma'$ 
		such that $\bar\phi_{\RR} \colon N_\mathbb{R} \to N'_\RR$ 
		maps $\tilde{\sigma}\in \widetilde{\Sigma}$ bijectively to a cone $\sigma'\in \Sigma'$,
		\item $\Sigma$ consists of cones 
		$\{\tilde\sigma + \sigma'' \mid \tilde{\sigma}\in \widetilde{\Sigma},~ \sigma'' \in \Sigma'' \}$. 
	\end{enumerate}
\end{proposition}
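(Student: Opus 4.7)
The plan is to prove the two directions separately. The surjection $\bar\phi$ fits into a short exact sequence of lattices $0 \to N'' \to N \xrightarrow{\bar\phi} N' \to 0$, which splits as abelian groups because $N'$ is free. Dually, this yields an exact sequence of tori $1 \to T_{N''} \to T_N \to T_{N'} \to 1$ with respect to which the toric morphism $\phi \colon X_\Sigma \to X_{\Sigma'}$ is equivariant; moreover, by the definition of $\Sigma''$, the toric variety $X_{\Sigma''}$ is precisely the closure of $T_{N''}$ in $X_\Sigma$ and coincides with the scheme-theoretic fiber of $\phi$ over the identity of $T_{N'} \subset X_{\Sigma'}$.

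For the direction $(\Leftarrow)$, assume (1) and (2). I would first argue that the lifting $\widetilde\Sigma$ provides a $\mathbb{Z}$-linear splitting of the lattice sequence: for each $\tilde\sigma \in \widetilde\Sigma$, the $\mathbb{R}$-linear bijection $\bar\phi_\RR \colon \tilde\sigma \to \sigma'$ must carry primitive ray generators to primitive ray generators, so it restricts to a bijection of lattice points $\tilde\sigma \cap N \to \sigma' \cap N'$. Since $\Sigma'$ is complete these generators span $N'_\RR$, and assembling the lifts consistently (using that $\widetilde\Sigma$ is a subfan, so lifts agree on faces) produces a sublattice $\widetilde N \subset N$ isomorphic to $N'$ under $\bar\phi$. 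This yields $N = \widetilde N \oplus N''$, and under this splitting condition (2) exhibits $\Sigma$ as the product fan $\widetilde\Sigma \times \Sigma''$. A product of fans corresponds to a product of toric varieties, so $X_\Sigma \cong X_{\widetilde\Sigma} \times X_{\Sigma''} \cong X_{\Sigma'} \times X_{\Sigma''}$, and $\phi$ is identified with the first projection, which is trivially an equivariant fiber bundle with fiber $X_{\Sigma''}$.

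For the direction $(\Rightarrow)$, assume $\phi$ is an equivariant fiber bundle with fiber $X_{\Sigma''}$. I would work locally over each affine invariant open $U_{\sigma'} \subset X_{\Sigma'}$ for $\sigma' \in \Sigma'$. Equivariant local triviality combined with the fact that $U_{\sigma'}$ contains the distinguished fixed point $x_{\sigma'}$ (whose fiber is a distinguished copy of $X_{\Sigma''}$) forces an equivariant isomorphism $\phi^{-1}(U_{\sigma'}) \cong U_{\sigma'} \times X_{\Sigma''}$; translating this to the fan side identifies the cones of $\Sigma$ whose image lies in $\sigma'$ with pairs $(\tau, \sigma'') \in \Sigma'_{\leq \sigma'} \times \Sigma''$, so there is a canonical lift $\tilde\sigma$ corresponding to $(\sigma', \{0\})$. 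I then need to check that these locally-defined lifts glue to a subfan: this follows because the rigidity of toric morphisms forces the local trivializations to agree on overlaps up to translation, and such translations preserve the lifted cones. The resulting collection $\widetilde\Sigma = \{\tilde\sigma\}_{\sigma'\in\Sigma'}$ is a subfan satisfying (1), while the local decompositions give (2).

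The main obstacle I anticipate is in $(\Rightarrow)$, where promoting the abstract equivariant local triviality to a canonical toric lift requires careful use of the combinatorics of torus-fixed points and orbits in $U_{\sigma'} \times X_{\Sigma''}$; in $(\Leftarrow)$, the subtle technical point is the lattice compatibility — verifying that the $\mathbb{R}$-linear lifting is lattice-preserving cone by cone, which is where the interpretation of $\widetilde\Sigma$ as a subfan of $\Sigma$ (rather than merely an abstract fan mapping to $\Sigma$) is crucially used.
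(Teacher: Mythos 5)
The paper does not actually prove this proposition; it cites \cite[Proposition 7.3]{Oda78Torus}, \cite[Chapter 3.3]{CLS11}, and \cite[Chapter VI.6]{Ewa96}, so there is no internal proof to compare against. Your sketch must therefore be judged on its own mathematical merit, and there is a serious error in the $(\Leftarrow)$ direction.

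You claim that the lifting $\widetilde\Sigma$ can be assembled into a single sublattice $\widetilde N \subset N$ with $N = \widetilde N \oplus N''$, and that condition (2) then exhibits $\Sigma$ as the product fan $\widetilde\Sigma \times \Sigma''$, so that $X_\Sigma \cong X_{\Sigma'} \times X_{\Sigma''}$. This cannot be right: it would say that every toric fiber bundle satisfying (1)--(2) is globally a \emph{product}, and then the proposition would collapse to the characterization of product fans. The Hirzebruch surfaces $\mathbb{F}_a$ ($a \neq 0$), with fan in $\Z^2$ on rays $(1,0)$, $(0,1)$, $(-1,a)$, $(0,-1)$ and $\bar\phi(x,y)=x$, satisfy (1) and (2) with the lifting $\widetilde\Sigma = \{\{0\},\, \mathrm{Cone}(1,0),\, \mathrm{Cone}(-1,a)\}$ and $\Sigma'' = \{\{0\},\, \mathrm{Cone}(0,1),\, \mathrm{Cone}(0,-1)\}$; the conclusion of the proposition is the (true) statement that $\mathbb{F}_a \to \mathbb{P}^1$ is a $\mathbb{P}^1$-bundle, whereas $\mathbb{F}_a \not\cong \mathbb{P}^1 \times \mathbb{P}^1$ for $a\neq 0$. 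The specific point where your argument breaks is the ``assembling the lifts consistently'' step: the cones of $\widetilde\Sigma$ need not lie in a common $\mathrm{rank}(N')$-dimensional subspace of $N_\RR$ (in the example, $(1,0)$ and $(-1,a)$ span all of $\RR^2$), so the per-cone sections of $\bar\phi$ do not glue to a linear section of the lattice sequence. The correct proof of $(\Leftarrow)$ builds a \emph{local} product decomposition separately over each affine chart $U_{\sigma'}$, using the span of the single lifted cone $\tilde\sigma$ over $\sigma'$ as the complement to $N''_\RR$ for that chart; the transition between two such local trivializations is where the bundle's twisting lives.

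Your $(\Rightarrow)$ sketch is closer to being viable but is incomplete for the reason you yourself flag: an equivariant fiber bundle's local trivializations need not a priori be toric morphisms, and showing that they can be taken toric (so that the pulled-back cone decomposition over $U_{\sigma'}$ is combinatorially a product) is the real content. Also note that your justification in $(\Leftarrow)$ that ``the bijection $\tilde\sigma \to \sigma'$ must carry primitive ray generators to primitive ray generators'' does not follow from (1) alone as stated; in the standard references this lattice compatibility is an explicit part of the definition of a split (lifting), and should be invoked rather than derived.
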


The fan $\Sigma$ determined by
the condition of Proposition~\ref{lem_toric_var_fibration} is called 
the \emph{join} of $\widetilde\Sigma$ and $\Sigma''$ and denoted by $\Sigma=\widetilde\Sigma \bigcdot \Sigma''$. 
We refer to \cite[Chapter III.1, Chapter VI.6]{Ewa96}. 
We need one more result to give a proof of Theorem~\ref{thm_main_thm_2}.
\begin{lemma}\label{lemma_join_star}
Let $\Sigma_1$ and $\Sigma_2$ be fans such that $\Sigma_1(1) \cap \Sigma_2(1) = \emptyset$. Suppose that $\tau \in \Sigma_1$. Then 
\[
\Sigma_1^{\ast}(\tau) \bigcdot \Sigma_2 = (\Sigma_1 \bigcdot \Sigma_2)^{\ast}(\tau).
\]
Here we denote the cone $\tau + \{0\}$ in $\Sigma_1 \bigcdot \Sigma_2$ by $\tau$.
\end{lemma}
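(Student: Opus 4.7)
The plan is to prove this by unwinding the definitions of star subdivision and join on both sides, and to verify that the resulting collections of cones coincide. The key structural observation is that since $\Sigma_1(1) \cap \Sigma_2(1) = \emptyset$, every ray of $\tau$ lies in $\Sigma_1(1)$ but not in any cone of $\Sigma_2$, so the new ray generator $u_\tau = \sum_{\rho \in \tau(1)} u_\rho$ introduced by the star subdivision is the same whether computed in $\Sigma_1$ or in $\Sigma_1 \bigcdot \Sigma_2$. In particular, the two sides of the claimed equality add the same extra ray to the common ``unchanged part'' $\{\sigma + \sigma'' \mid \sigma \in \Sigma_1,\ \tau \nsubseteq \sigma,\ \sigma'' \in \Sigma_2\}$.

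The main step is to identify the cones containing $\tau$ on each side. By definition of the join and the disjointness of rays, a cone of $\Sigma_1 \bigcdot \Sigma_2$ containing $\tau$ is exactly of the form $\sigma + \sigma''$ for some $\sigma \in \Sigma_1$ with $\tau \subseteq \sigma$ and some $\sigma'' \in \Sigma_2$; this uses that the rays of $\tau$ must all be contributed by the $\Sigma_1$-summand. Applying the star subdivision to $\Sigma_1 \bigcdot \Sigma_2$ at $\tau$ replaces each such $\sigma + \sigma''$ by the set
\[
\{\textup{Cone}(A) \mid A \subseteq \{u_\tau\} \cup (\sigma + \sigma'')(1),\ \tau(1) \nsubseteq A\}.
\]
Since $\sigma(1)$ and $\sigma''(1)$ are disjoint and $\tau(1) \subseteq \sigma(1)$, any subset $A$ of $\{u_\tau\} \cup \sigma(1) \cup \sigma''(1)$ with $\tau(1) \nsubseteq A$ decomposes uniquely as $A' \cup A''$ where $A' \subseteq \{u_\tau\} \cup \sigma(1)$ with $\tau(1) \nsubseteq A'$ and $A'' \subseteq \sigma''(1)$, and then $\textup{Cone}(A) = \textup{Cone}(A') + \textup{Cone}(A'')$. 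This exhibits the collection above as $\Sigma^*_{1,\sigma}(\tau) \bigcdot \{\text{faces of }\sigma''\}$, which is precisely what the left hand side $\Sigma_1^*(\tau) \bigcdot \Sigma_2$ contributes from joining cones in $\Sigma^*_{1,\sigma}(\tau)$ with cones in $\Sigma_2$.

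Putting these two pieces together—matching the cones not containing $\tau$ tautologically, and matching the cones containing $\tau$ via the decomposition above—establishes the equality of fans. I do not expect a real obstacle here; the argument is essentially a bookkeeping exercise, and the only delicate point is to invoke $\Sigma_1(1) \cap \Sigma_2(1) = \emptyset$ at the right place to guarantee both the unique decomposition $A = A' \cup A''$ and that $u_\tau$ is a well-defined ray generator in the join.
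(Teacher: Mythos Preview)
Your proposal is correct and follows essentially the same approach as the paper: both arguments split into matching the cones not containing $\tau$ tautologically, and then reducing the cones containing $\tau$ to the key identity $(\Sigma_1)^{\ast}_{\sigma_1}(\tau) \bigcdot \Sigma_2 = \bigcup_{\sigma_2 \in \Sigma_2} (\Sigma_1 \bigcdot \Sigma_2)^{\ast}_{\sigma_1 + \sigma_2}(\tau)$, which is proved by the unique decomposition $A = A' \cup A''$ with $A' \subseteq \{u_\tau\} \cup \sigma_1(1)$ and $A'' \subseteq \sigma_2(1)$ afforded by the disjointness of rays. Your observation that $u_\tau$ is the same vector whether computed in $\Sigma_1$ or in the join is implicit in the paper but worth making explicit, as you do.
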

\begin{proof}
For a cone $\tau \in \Sigma_1$, we have that
\[
\begin{split}
 \Sigma_1^{\ast}(\tau) \bigcdot \Sigma_2 
&= (\{\sigma_1 \in \Sigma_1 \mid \tau \nsubseteq \sigma_1 \} \bigcdot \Sigma_2) \cup \bigcup_{\substack{\tau \subseteq \sigma_1, \\ \sigma_1 \in \Sigma_1}} ((\Sigma_1)_{\sigma_1}^{\ast}(\tau) \bigcdot \Sigma_2), \\
(\Sigma_1 \bigcdot \Sigma_2)^{\ast}(\tau + \{0\})
&= \{\sigma_1 + \sigma_2 \in \Sigma_1 + \Sigma_2 \mid \tau+\{0\} \nsubseteq \sigma_1 + \sigma_2 \}
\cup \bigcup_{\tau + \{0\} \subseteq \sigma_1 + \sigma_2} (\Sigma_1 \bigcdot \Sigma_2)^{\ast}_{\sigma_1 + \sigma_2} (\tau + \{0\}). 
\end{split}
\]
We note that by the definition of join of fans, we get
\[
\{\sigma_1 \in \Sigma_1 \mid \tau \nsubseteq \sigma_1 \} \bigcdot \Sigma_2 = 
\{\sigma_1 + \sigma_2 \in \Sigma_1 + \Sigma_2 \mid \tau + \{0\} \nsubseteq \sigma_1 + \sigma_2 \}.
\]
Moreover, we have 
\[
\bigcup_{\tau + \{0\} \subseteq \sigma_1 + \sigma_2} (\Sigma_1 \bigcdot \Sigma_2)^{\ast}_{\sigma_1 + \sigma_2} (\tau + \{0\})
= \bigcup_{\substack{\tau \subseteq \sigma_1, \\ \sigma_1 \in \Sigma_1}} \bigcup_{\sigma_2 \in \Sigma_2} (\Sigma_1 \bigcdot \Sigma_2)^{\ast}_{\sigma_1 + \sigma_2} (\tau + \{0\}).
\]

Therefore to prove the lemma, it is enough to show that for any $\sigma_1 \in \Sigma_1$ satisfying $\tau \subseteq \sigma_1$, the following equality holds:
\begin{equation}\label{eq_join_star}
(\Sigma_1)^{\ast}_{\sigma_1}(\tau) \bigcdot \Sigma_2=
\bigcup_{\sigma_2 \in \Sigma_2} (\Sigma_1 \bigcdot \Sigma_2)^{\ast}_{\sigma_1 + \sigma_2} (\tau+\{0\}). 
\end{equation}
We note that for $\sigma_2 \in \Sigma_2$, 
\begin{equation}\label{eq_join_star_2}
(\Sigma_1 \bigcdot \Sigma_2)^{\ast}_{\sigma_1+ \sigma_2} (\tau + \{0\})
= \{ \Cone(B) \mid B \subseteq \{u_{\tau}\} \cup \sigma_1(1) \cup \sigma_2(1), \tau(1) \nsubseteq B\}. 
\end{equation}

Suppose that $A \subseteq \{u_{\tau}\} \cup \sigma_1(1)$ satisfying $\tau(1) \nsubseteq A$. Then for a cone $\sigma_2 \in \Sigma_2$, $\Cone(A) + \sigma_2$ is an element in $(\Sigma_1)^{\ast}_{\sigma_1}(\tau) \bigcdot \Sigma_2$.
Since $\Cone(A) + \sigma_2 = \Cone(A \cup \sigma_2(1))$ and 
$\tau(1) \nsubseteq A \cup \sigma_2(1)$, the cone $\Cone(A) + \sigma_2$ is an element in 
$(\Sigma_1 \bigcdot \Sigma_2)^*_{\sigma_1 + \sigma_2}(\tau + \{0\})$ by~\eqref{eq_join_star_2}. 

Now, we consider $\Cone(B)$ in $(\Sigma_1 \bigcdot \Sigma_2)^{\ast}_{\sigma_1 + \sigma_2}(\tau+\{0\})$ for some $\sigma_2 \in \Sigma_2$. We set $A := B \cap (\{u_{\tau}\} \cup \sigma_1(1))$ and $B' := B \cap \sigma_2(1)$. Since $B \subseteq \{u_{\tau}\} \cup \sigma_1(1) \cup \sigma_2(1)$, we have $\Cone(B) = \Cone(A) + \Cone(B')$. Moreover, $\Cone(A) \in (\Sigma_1)^{\ast}_{\sigma_1}(\tau)$, and $\Cone(B') \in \Sigma_2$ since $\Cone(B')$ is a face of the cone $\Cone(B)$. Hence the equality~\eqref{eq_join_star} holds, and we have proven the lemma. 
\end{proof}

\begin{proof}[Proof of Theorem~\ref{thm_main_thm_2}]
By Proposition~\ref{lem_toric_var_fibration}, there exist liftings
$\widetilde{\Sigma}'_{n_1},\dots,\widetilde{\Sigma}'_{n_{m-1}}$ of the fans $\Sigma'_{n_1},\dots,\Sigma'_{n_{m-1}}$ of complex projective spaces such that
\[
\Sigma' = \widetilde{\Sigma}'_{n_1} \bigcdot \cdots \bigcdot \widetilde{\Sigma}'_{n_{m-1}} \bigcdot \Sigma'_{n_m}.
\]
More precisely, the lifting $\widetilde{\Sigma}'_{n_{\ell}} \subset \mathbb{R}^n$ consists of the cones
\[
\Cone(u^{\ell}_{1},\dots,\widehat{u}^{\ell}_{k_{\ell}},\dots,u^{\ell}_{n_{\ell}+1})
 \]
and their faces. On the other hand, the fan $\Sigma$ of the  closure of a generic orbit in the associated flag Bott manifold also can be written by
\[
\Sigma = \widetilde{\Sigma}_{n_1} \bigcdot \cdots \bigcdot \widetilde{\Sigma}_{n_{m-1}} \bigcdot \Sigma_{n_m},
\]
where $\widetilde{\Sigma}_{n_{\ell}}$ is a lifting of the fan $\Sigma_{n_{\ell}}$ of the permutohedral variety whose maximal cones are given by 
\[
\Cone(u^{\ell}_{A^{\ell}_1},\dots,u^{\ell}_{A^{\ell}_{n_{\ell}}})
\]
for a proper chain $\emptyset \subsetneq A^{\ell}_1 \subsetneq \cdots \subsetneq A^{\ell}_{n_{\ell}} \subsetneq [n_{\ell}+1]$ of subsets. 

By Lemma~\ref{lemma_join_star}, the operations join and star subdivision commute each other. Hence it is enough to show that the star subdivisions of the fan $\widetilde{\Sigma}_{n_{\ell}}'$ along the cones $\{\Cone(\{u^{\ell}_x \mid x \in A \}) \mid \emptyset \subsetneq A \subsetneq [n_{\ell}+1] \}$ in the decreasing order of dimensions of cones agrees with the fan $\widetilde{\Sigma}_{n_{\ell}}$. We note that the fan $\Sigma_n$ of the permutohedral variety can be obtained by star subdivisions of all the cones of dimension grater than $0$ of the fan $\Sigma_n'$ of $\C P^n$ in the decreasing order of dimensions of cones (see~Remark~\ref{rmk:permuto_and_simplex}). Moreover, 
for $1 \leq \ell \leq m$ and any nonempty proper subset $\emptyset \subsetneq 
\{x_1,\dots,x_d \} \subsetneq [n_{\ell}+1]$, 
the following equalities hold by Corollary~\ref{cor_char_vectors_are_obtained_by_sum}:
\[
u^{\ell}_{\{x_1, \dots, x_d\}}=
\sum_{i=1}^d
u^{\ell}_{\{x_i\}}
=
\sum_{i=1}^d u^{\ell}_{x_i}.
\]
Therefore the fan $\widetilde{\Sigma}_{n_{\ell}}$ is obtained from $\widetilde{\Sigma}_{n_{\ell}}'$ by star subdividing along the cones $\{\Cone(\{u^{\ell}_x \mid x \in A \}) \mid \emptyset \subsetneq A \subsetneq [n_{\ell}+1] \}$ in the given order, so the result follows. 
\end{proof}

\begin{remark}
In this paper, we concentrate on the closure of a generic torus orbit in the associated flag Bott manifold.
Since the matrices for the associated flag Bott manifolds can have nonzero entries only on the first column, there are flag Bott manifolds which are not the associated flag Bott manifolds. The second and the fourth authors compute the fan of the closure of a generic torus orbit in any flag Bott manifold in~\cite{LeSu18}. 
\end{remark}
\begin{remark}
	There are several studies on the closures of non-generic torus orbits. For instance, \cite{GS87combinatorial} studied torus orbit closures in homoneneous manifolds $G/P$ in terms of matroids, and, recently, \cite{LeeMasuda} and~\cite{LeeMasudaPark} study torus orbit closures associated to Schubert varieties and Richardson varieties, respectively.
\end{remark}
%%%%%%%%%%%%%%%%%%%%%%%%%%%%%%%%%%%%%%%%%%%%%%%%%%%%%%%%%%%%%%%%%%%%%%%%
\subsection{Proof of Theorem~\ref{thm_main_thm_1}}
\label{sec_proof_of_main_thm}
%%%%%%%%%%%%%%%%%%%%%%%%%%%%%%%%%%%%%%%%%%%%%%%%%%%%%%%%%%%%%%%%%%%%%%%%

For an $m$-stage flag Bott manifold $F_m$, 
consider the effective canonical $\bf H$-action.
Each fiber of a bundle $F_j \to F_{j-1}$ has the restricted
$(\Cstar)^{n_j}$-action, and its orbit closure of a generic point
is the permutohedral variety $X_{n_j}$.
Therefore the closure of a generic orbit of the torus
$\bf H$ in $F_m$ has the structure of iterated permutohedral 
variety bundles. Hence, the following lemma is straightforward from 
the successive application of  Proposition~\ref{lem_toric_var_fibration}. 

\begin{lemma}\label{lemma_orbit_of_X}
	Let $F_m$ be the associated $m$-stage flag Bott manifold and $X$ 
	the closure of a generic orbit of the torus $\mathbf H$ in $F_m$. 
	Let $\Sigma_{n_1}, \dots, \Sigma_{n_m}$ be fans of permutohedral
	varieties $X_{n_1}, \dots, X_{n_m}$, respectively. Then, there are 
	liftings $\widetilde{\Sigma}_{n_1}, \dots, \widetilde{\Sigma}_{n_{m-1}}$ 
	such that 
	$$\Sigma=\widetilde{\Sigma}_{n_1} \bigcdot \cdots \bigcdot \widetilde{\Sigma}_{n_{m-1}} \bigcdot \Sigma_{n_m}.$$
\end{lemma}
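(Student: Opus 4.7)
The plan is to prove the lemma by induction on the height $m$, using the iterated fibration structure of $F_m$ together with Proposition~\ref{lem_toric_var_fibration} at each stage. The base case $m=1$ is immediate: $F_1 = \flag(n_1+1)$ with the effective $\mathbf H$-action is exactly the setting of the permutohedral variety (in the sense recalled in Subsection~\ref{subsec:permutohedron}), so the closure of a generic $\mathbf H$-orbit is $X_{n_1}$ and $\Sigma = \Sigma_{n_1}$.

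For the inductive step, assume the lemma holds for height $m-1$, so that the generic orbit closure $X^{(m-1)}$ in $F_{m-1}$ is a nonsingular toric variety with fan $\Sigma^{(m-1)} = \widetilde\Sigma_{n_1} \bigcdot \cdots \bigcdot \widetilde\Sigma_{n_{m-2}} \bigcdot \Sigma_{n_{m-1}}$. The map $p_m \colon F_m \to F_{m-1}$ is $\mathbf H$-equivariant with typical fiber $\flag(n_m+1)$, and the subtorus $(\Cstar)^{n_m} \subset \mathbf H$ (coming from $H_{GL(n_m+1)}$, modded out by the global diagonal character that acts trivially on the projectivization) acts on each fiber $\flag(n_m+1)$ in the canonical way. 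Taking a generic point $[g_1,\dots,g_m] \in F_m$, the $\mathbf H$-orbit through it projects under $p_m$ to a generic orbit in $F_{m-1}$ and restricts on each fiber to a generic orbit in $\flag(n_m+1)$. Taking closures, one sees that $X \to X^{(m-1)}$ is an equivariant fiber bundle with fiber $X_{n_m}$, the $n_m$-dimensional permutohedral variety.

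To identify the fan, I would apply Proposition~\ref{lem_toric_var_fibration} to the surjective lattice map $\bar\phi \colon N \to N'$ induced by $p_m$ on cocharacter lattices of the effective tori, where $N'$ is the lattice containing $\Sigma^{(m-1)}$. The subfan consisting of cones of $\Sigma$ contained in $\ker\bar\phi_\RR$ is, by the fiber description above, precisely the fan $\Sigma_{n_m}$ of $X_{n_m}$ sitting inside the cocharacter lattice of the fiber torus. Proposition~\ref{lem_toric_var_fibration} then produces a lifting $\widetilde\Sigma_{n_{m-1}} \subseteq \Sigma$ of $\Sigma_{n_{m-1}}$ (absorbed into a lifting of $\Sigma^{(m-1)}$) such that $\Sigma = \widetilde\Sigma^{(m-1)} \bigcdot \Sigma_{n_m}$, where $\widetilde\Sigma^{(m-1)}$ is the corresponding lifting of the base fan. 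Combining this with the inductive expression for $\Sigma^{(m-1)}$, and noting that join is associative, yields the claimed join decomposition.

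The main obstacle is checking that the hypotheses of Proposition~\ref{lem_toric_var_fibration} genuinely hold, namely exhibiting the lifting $\widetilde\Sigma$ with the right bijective behavior under $\bar\phi_\RR$ and verifying that every cone of $\Sigma$ decomposes as $\tilde\sigma + \sigma''$ with $\tilde\sigma \in \widetilde\Sigma$ and $\sigma'' \in \Sigma_{n_m}$. This in turn reduces to the geometric assertion that $X \to X^{(m-1)}$ is a locally trivial toric fiber bundle in the equivariant sense; once one trusts the description of $\mathbf H$ as a product of the torus acting on the base and the torus acting on the fiber (after the appropriate quotienting), this is a direct consequence of the fact that the $\mathbf H$-action on $F_m$ is the canonical one coming from the orbit space construction in Subsection~\ref{subsec_orb_sp_const_fbt}, which respects the bundle projection $p_m$.
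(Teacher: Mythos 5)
Your proposal is correct and follows essentially the same route as the paper: the paper's own argument consists of observing that each fiber of $F_j \to F_{j-1}$ carries the restricted $(\Cstar)^{n_j}$-action with generic orbit closure $X_{n_j}$, concluding that $X$ is an iterated permutohedral variety bundle, and then appealing to a ``successive application of Proposition~\ref{lem_toric_var_fibration}'' — which is precisely the induction you spell out. You supply somewhat more detail (tracking how the lifting at each stage nests into the full join decomposition) than the paper does, but the key ideas — the $\mathbf H$-equivariance of $p_m$, the permutohedral fibers, and Proposition~\ref{lem_toric_var_fibration} — are the same.
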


It remains to compute the primitive generators of rays in $\Sigma$. 
In general, a toric variety can be regarded as a GKM manifold with respect to the action of 
compact torus in the algebraic torus. 

\begin{remark}\label{rmk_relation_bet_fan_and_GKM}
	Two combinatoric objects, a smooth complete fan $\Sigma$ and 
	a GKM graph $(\Gamma, \alpha)$, of a toric variety are related by associating 
	maximal cones in $\Sigma$ with vertices of $\Gamma$, and cones of 
	codimension 1 in $\Sigma$ with edges of $\Gamma$. 
	In particular, if $\Sigma$ is an $n$-dimensional smooth fan, then an $n$-dimensional cone 
	$\sigma$ has $n$ facets, say $\tau_1, \dots, \tau_n$, which  correspond to the outgoing 
	edges, say $e_1, \dots, e_n$, in $\Gamma$ from the vertex corresponding to~$\sigma$. Let $\rho$ be a 
	$1$-dimensional cone in $\Sigma$, then $(n-1)$ many facets of $\sigma$ contains $\rho$ 
	except one facet. 
\end{remark}

Regarding $\Sigma$ be a fan in $\Lie(\mathbf T)$, 
the next Lemma \ref{lemma:char_ftn_and_axial_ftn} shows the relation between 
the ray generators of rays in $\Sigma$ and the axial function 
$\alpha\colon E(\Gamma) \to \mathfrak{t}_\Z^\ast$. 

\begin{lemma}\cite[Proposition 7.3.18]{BuPa15}\label{lemma:char_ftn_and_axial_ftn}
	Let $e_1, \dots, e_n$ and $\rho$ be as in Remark \ref{rmk_relation_bet_fan_and_GKM}, 
	and $u_\rho$ the ray generator 
	of $\rho$. Then	the following system of equations holds:
	\begin{equation}\label{eq_rel_bet_char_fun_and_axial_ftn}
	\langle \alpha(e_i), u_\rho \rangle = 
	\begin{cases}
	1 & \text{ if } i = 1,\\
	0 & \text{ if } 2 \leq i \leq n.
	\end{cases}
	\end{equation}
	In particular, given $\alpha(e_1), \dots, \alpha(e_n)$, the vector 
	$u_\rho$ is uniquely determined. \hfill \qed
\end{lemma}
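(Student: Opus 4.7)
My plan is to exploit the standard correspondence between the smooth toric fan and local coordinates on the toric variety near a fixed point, translating this into the GKM data. First, I would fix notation: let $u_{\rho_1}, \dots, u_{\rho_n}$ be the ray generators of the maximal cone $\sigma$. Since $\sigma$ is smooth, these form a $\Z$-basis of the lattice $N = \mathfrak{t}_{\Z}$, and the facets of $\sigma$ are $\tau_i = \textup{Cone}(u_{\rho_1}, \dots, \widehat{u_{\rho_i}}, \dots, u_{\rho_n})$. Let $\alpha_1, \dots, \alpha_n \in \mathfrak{t}_{\Z}^{\ast}$ denote the dual basis, characterized by $\langle \alpha_i, u_{\rho_j}\rangle = \delta_{ij}$.

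Next, I would invoke the fact that the affine open $U_{\sigma}$ is $\mathbf{T}_{\C}$-equivariantly isomorphic to $\C^n$ with torus action $(t\cdot z)_i = t^{\alpha_i} z_i$, so that the fixed point $x_{\sigma}$ sits at the origin and the tangential $\mathbf{T}$-representation at $x_{\sigma}$ decomposes as $\bigoplus_{i=1}^{n} V(\alpha_i)$. Under the correspondence of Remark~\ref{rmk_relation_bet_fan_and_GKM}, the edge $e_i \in E(\Gamma)_{v_{\sigma}}$ corresponding to the facet $\tau_i$ is realized by the invariant $\C P^1$ whose tangent line at $x_{\sigma}$ is precisely the weight-$\alpha_i$ subspace; hence $\alpha(e_i) = \alpha_i$.

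To conclude, I would observe that $\rho$ is a ray of $\sigma$ lying in all the facets $\tau_2, \dots, \tau_n$ but not in $\tau_1$. Since $\tau_i$ omits only the ray $\rho_i$ among the rays of $\sigma$, the only ray not lying in $\tau_1$ is $\rho_1$, so $\rho = \rho_1$ and $u_\rho = u_{\rho_1}$. The relation \eqref{eq_rel_bet_char_fun_and_axial_ftn} then reads $\langle \alpha(e_i), u_{\rho_1}\rangle = \langle \alpha_i, u_{\rho_1}\rangle = \delta_{i1}$, which is exactly the claimed system. Uniqueness is immediate: because $\alpha(e_1), \dots, \alpha(e_n)$ form the dual basis of a lattice basis of $N$, they span $\mathfrak{t}^{\ast}$, so the $n$ linear conditions in \eqref{eq_rel_bet_char_fun_and_axial_ftn} pin down $u_\rho$ as an element of $\mathfrak{t}$.

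There is no serious obstacle here; the statement is essentially a dictionary entry between toric and GKM data, and the only care needed is to verify the sign/orientation convention so that the axial function $\alpha(e_i)$, defined as the weight on the tangent line $T_{x_{\sigma}} X_{e_i}$, agrees with $\alpha_i$ and not its negative. This is a standard computation on $U_{\sigma} \cong \C^n$, where the $\C P^1$ associated to $\tau_i$ corresponds to the $i$th coordinate axis (completed at infinity by gluing with the adjacent affine chart), and the tangent direction at $0$ is the $\alpha_i$-weight space.
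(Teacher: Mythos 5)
Your proof is correct. Note that the paper does not actually prove this lemma: it is stated with a citation to Buchstaber--Panov and immediately marked \qed, i.e.\ the result is imported without argument. What you have supplied is the standard dictionary proof that the citation hides: you use smoothness of $\sigma$ to get the $\Z$-basis $u_{\rho_1},\dots,u_{\rho_n}$ and its dual basis $\alpha_1,\dots,\alpha_n$, identify the axial function $\alpha(e_i)$ with $\alpha_i$ by computing the tangential representation on the chart $U_\sigma\cong\C^n$ and observing that the invariant $\C P^1$ through $x_\sigma$ attached to the facet $\tau_i$ is (locally) the $i$-th coordinate axis, and then the system \eqref{eq_rel_bet_char_fun_and_axial_ftn} becomes the duality relation $\langle\alpha_i,u_{\rho_1}\rangle=\delta_{i1}$ once you identify $\rho=\rho_1$ as the unique ray not lying in $\tau_1$. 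The uniqueness clause follows exactly as you say: the $\alpha(e_i)$ are a lattice basis of $M$, so the linear system is nondegenerate. You are also right to flag that the only delicate point is the sign convention for the axial function, and that it is resolved by the local model on $U_\sigma$. The one thing left slightly implicit is that $\rho$ must be a ray \emph{of $\sigma$} (otherwise it lies in no facet of $\sigma$), and that the labeling is taken so that $\tau_1$ is the unique facet omitting $\rho$; both are implicit in Remark~\ref{rmk_relation_bet_fan_and_GKM} and in the form of the stated equations, and you use them correctly.
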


Lemma~\ref{lemma:char_ftn_and_axial_ftn} says that the tangential representation 
at a fixed point determines the ray generator $u_\rho$ of a $1$-dimensional cone $\rho$ 
contained in  the a maximal dimensional cone $\sigma$ corresponding to the given fixed point. 
The next lemma shows that $u_\rho$ obtained in \eqref{eq_rel_bet_char_fun_and_axial_ftn} 
is independent from the choice of a maximal dimensional cone containing $\rho$.

\begin{lemma}\label{lemma:independence of choice of vertex}
	The primitive generator $u_\rho$ of an $1$-dimensional cone $\rho$ 
	obtained from equations \eqref{eq_rel_bet_char_fun_and_axial_ftn} 
	is well-defined, i.e., it is independent of the choice of a maximal dimensional 
	cone $\sigma$ containing $\rho$. 
\end{lemma}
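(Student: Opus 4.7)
The plan is to reduce to the case of two adjacent maximal cones using connectivity, and then transfer the solution of the linear system from one vertex to the other via the GKM connection.

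First I would observe that the set of maximal cones of $\Sigma$ containing a fixed ray $\rho$ is connected via codimension-one adjacency: for any two such cones $\sigma, \sigma'$, there is a sequence $\sigma = \sigma_0, \sigma_1, \dots, \sigma_k = \sigma'$ of maximal cones all containing $\rho$ in which each pair $\sigma_{i-1}, \sigma_i$ shares a facet (which then necessarily contains $\rho$). This follows from the fact that a smooth complete fan $\Sigma$ induces a complete fan in $N_\RR / \RR\rho$ whose maximal cones correspond to the maximal cones of $\Sigma$ containing $\rho$, together with the well-known connectedness of the dual graph of a complete simplicial fan. Hence it suffices to treat two adjacent cones $\sigma, \sigma'$ sharing a facet $\tau \supset \rho$.

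Next, let $v, v'$ be the vertices of $\Gamma$ corresponding to $\sigma, \sigma'$, and let $e \colon v \to v'$ be the edge associated to $\tau$. Label the outgoing edges at $v$ as $e_1, \dots, e_n$ so that $e_1$ is the edge whose facet does not contain $\rho$, and $e_n = e$. The GKM graph carries a connection $\{\theta_e\}$, and the geometric input I would invoke is that for a smooth toric variety this connection is given by ray-wise identification: for $i < n$ the edge $\theta_e(e_i)$ at $v'$ corresponds to the same ray of $\sigma'$ as $e_i$ does in $\sigma$, so in particular $\theta_e(e_1)$ is the edge at $v'$ whose facet does not contain $\rho$. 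The defining properties of the connection then supply integers $c_1, \dots, c_{n-1}$ with $\alpha(\theta_e(e_i)) = \alpha(e_i) + c_i \alpha(e)$ for $i < n$, while $\theta_e(e_n) = \bar{e}$ satisfies $\alpha(\bar{e}) = -\alpha(e)$.

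With this setup, I would verify that the vector $u_\rho$ determined by the equations at $v$ also satisfies the defining equations at $v'$. Since $e = e_n$ corresponds to a facet containing $\rho$, the equations at $v$ force $\langle \alpha(e), u_\rho \rangle = 0$; substituting into the congruence formulas above yields $\langle \alpha(\theta_e(e_1)), u_\rho \rangle = 1$, $\langle \alpha(\theta_e(e_i)), u_\rho \rangle = 0$ for $2 \leq i \leq n-1$, and $\langle \alpha(\bar{e}), u_\rho \rangle = 0$. By the uniqueness assertion in Lemma~\ref{lemma:char_ftn_and_axial_ftn}, this forces the vector computed at $v'$ to coincide with the one computed at $v$.

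The main obstacle I anticipate is the combinatorial claim that the connection $\theta_e$ on the GKM graph of a smooth toric variety really does permute outgoing edges in a way compatible with the underlying ray structure — in particular that it sends the edge whose facet misses $\rho$ at $v$ to the edge whose facet misses $\rho$ at $v'$. Once this identification is in hand, the remainder reduces to the short linear-algebra check above. An alternative route that bypasses this identification is to invoke the fact that, at each vertex, the axial functions form a $\Z$-basis of $\mathfrak{t}_\Z^{\ast}$ dual to the primitive generators of the rays of the corresponding maximal cone, so both candidate values of $u_\rho$ coincide with the intrinsically defined primitive generator of $\rho$.
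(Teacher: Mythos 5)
Your main argument follows the paper's strategy closely: reduce to two adjacent maximal cones via the connectedness of the dual graph, then transfer the linear system from $v$ to $v'$ using the relations $\alpha(\theta_e(e_i)) = \alpha(e_i) + c_i\alpha(e)$ together with $\langle\alpha(e),u_\rho\rangle=0$. The one place where you explicitly flag a gap --- that the connection $\theta_e$ sends the edge at $v$ whose facet misses $\rho$ to the corresponding edge at $v'$ --- is exactly the step the paper handles differently, and you should note that the paper's argument needs less than your ``ray-wise identification'' claim. The paper observes that the orbit closure $\overline{O(\rho)}$ is a $T$-invariant toric subvariety of $X_\Sigma$, so the induced subgraph on the vertices corresponding to maximal cones containing $\rho$ is a GKM subgraph with connection inherited from $\theta$. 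Consequently $\theta_{e_n}$ restricts to a bijection from $\{e_2,\dots,e_n\}$ onto $\{e'_2,\dots,e'_n\}$ (the edges of the subgraph at $v$ and $v'$), and since $\theta_{e_n}$ is a bijection on all edges it must carry $e_1$ to $e'_1$. This pins down precisely what is needed without asserting that $\theta_e$ identifies edges ray-by-ray for $i=2,\dots,n-1$.

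Your proposed ``alternative route'' --- that the axial functions at a vertex are dual to the ray generators of the corresponding maximal cone, so the unique solution of the system \eqref{eq_rel_bet_char_fun_and_axial_ftn} at any vertex containing $\rho$ is the intrinsically defined primitive generator of $\rho$ --- is correct and in fact shorter than either argument. It is essentially just the conjunction of the existence and uniqueness parts of Lemma~\ref{lemma:char_ftn_and_axial_ftn}: that lemma already asserts that the intrinsic $u_\rho$ solves the system at \emph{every} vertex whose cone contains $\rho$, and that the solution is unique, whence vertex-independence is immediate. The paper instead chooses a purely GKM-theoretic route via the connection, which has the advantage of not presupposing which vector the system ``ought'' to produce; but as a proof of the stated lemma your shortcut is valid.
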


\begin{proof}
	Suppose that $\sigma$ and $\sigma'$ are two maximal cones containing 
	$\rho$, whose facets are $\{\tau_i\mid 1 \leq i \leq n\}$ and  $\{\tau'_i\mid 1 \leq i \leq n\}$, 
	respectively. 
	Here, we may assume that $\sigma$ and $\sigma'$ are adjacent, 
	i.e., $\sigma$ and $\sigma'$ meet at a common facet, say  $\tau_n=\tau'_n$, 
	otherwise we choose a path of maximal cones connecting $\sigma$ and $\sigma'$, 
	and apply the same argument. 
	
	By the correspondence between cones in a smooth complete fan and a GKM graph 
	mentioned in Remark~\ref{rmk_relation_bet_fan_and_GKM}, we set up the following notation:
	\begin{enumerate}
		\item $\tau_1$ and $\tau'_1$: facets of $\sigma$ and $\sigma'$ which does not contain $\rho$, respectively;
		\item $e_1$ and $e_1'$:  edges in $\Gamma$ corresponding to $\tau_1$ and $\tau_1'$, respectively.
	\end{enumerate}
	We refer to Figure~\ref{fig_GKM-graph_near_two_vertices} for a 3-dimensional example. 
     \begin{figure}
	\begin{tikzpicture}
	\draw[very thick, ->] (0,0)--(3,3); \node[above] at (3,3) {\footnotesize$\rho$};
	
	\begin{scope}[scale=1.2]
	\draw[dashed, ->] (0,0)--(2.6, 2.3);
	\draw[dashed, ->] (0,0)--(1.5, 2.7);
	\end{scope}
	
	\node at (0.7, 1.4) {\footnotesize$\tau_2$};
	\draw[dotted, thick] (2.3, 1.2) [out=60, in=180]  to (4,2);\node[right] at (4,2) {\footnotesize$\tau_2'$};
	
	\draw[fill=blue, opacity=0.4] (0,0)--(1.8, 1.8)--(3,1)--cycle;
	\draw[dotted, thick] (3.5/3, 2.6/3) [out=180, in=120] to (1,-0.5); \node[below] at (1,-0.5) {\footnotesize$\tau_3=\tau_3'$};
	\draw[fill=yellow, opacity=0.5] (0,0)--(0.3, 2)--(1.8, 1.8)--(1.7, 0.8)--cycle;

	\draw (0.3,2)--(1.7, 0.8)--(3, 1)--(2.6, 2.3)--(1.5, 2.7)--cycle;
	
	\begin{scope}[scale=1.2]
	\draw[->] (0,0)--(0.3,2);
	\draw[->] (0,0)--(1.7, 0.8);
	\draw[->] (0,0)--(3, 1);
	\end{scope}
	
	\draw (1.8, 1.8)--(0.3,2);
	\draw (1.8, 1.8)--(1.7, 0.8);
	\draw (1.8, 1.8)--(3, 1);
	\draw (1.8, 1.8)--(2.6, 2.3);
	\draw (1.8, 1.8)--(1.5, 2.7);
	
	\node at (3.8/3, 4.6/3) {\footnotesize$\sigma$};
	\node at (2.1, 1.3) {\footnotesize$\sigma'$};
	\draw[dotted, thick] (4.7/3, 1.8/3) [out=330, in=180] to (4,0); \node[right] at (4,0) {\footnotesize$\tau_1'$};
	\node at (2/3, 2.8/3) {\footnotesize$\tau_1$};
	
	\begin{scope}[scale=1.7, xshift=120, yshift=-25]
	
	\draw (3.8/3, 4.6/3)--(2.1, 1.3)--(7.4/3, 5.1/3)--(5.9/3, 6.8/3)--(3.6/3, 6.5/3)--cycle;
	
	\draw[thick, ->] (3.8/3, 4.6/3)--(3.8/3 + 2.1/3-3.8/9, 4.6/3+ 1.3/3-4.6/9);
	\draw[thick, ->] (3.8/3, 4.6/3)--(3.8/3+3.6/9-3.8/9, 4.6/3+ 6.5/9-4.6/9);
	\draw[thick, ->] (3.8/3, 4.6/3)--(3.8/3+0.5/2-3.8/6, 4.6/3+1/2-4.6/6);
	
	\draw[thick, ->] (2.1, 1.3)--(2.1+ 3.8/9-2.1/3, 1.3+4.6/9-1.3/3);
	\draw[thick, ->] (2.1, 1.3)--(2.1+7.4/9-2.1/3, 1.3+5.1/9-1.3/3);
	\draw[thick, ->] (2.1, 1.3)--(2.1+2.5/2-2.1/2, 1.3+0.5/2-1.3/2);
	
	\node[rotate=-15] at (1.5, 1.65) {\footnotesize$e_3$};
	\node[rotate=-15] at (1.9, 1.55) {\footnotesize$e_3'$};
	\node at (1.1,1.8) {\footnotesize$e_2$};
	\node at (2.4,1.4) {\footnotesize$e_2'$};
	\node at (0.7,1.3) {\footnotesize$e_1$};
	\node at (2.1,0.8) {\footnotesize$e_1'$};
	
	\fill(3.8/3, 4.6/3) circle (1pt); \node[below] at (3.8/3, 4.6/3) {$v$};
	\fill(2.1, 1.3) circle (1pt); \node[below] at (2, 1.3) {$v'$};
	\fill(7.4/3, 5.1/3) circle (1pt);
	\fill(5.9/3, 6.8/3) circle (1pt);
	\fill(3.6/3, 6.5/3) circle (1pt);
	
	\draw (3.8/3, 4.6/3)--(0.5,1);
	\draw (2.1, 1.3)--(2.5,0.5);
	\draw (7.4/3, 5.1/3)--(3,1.5);
	\draw (5.9/3, 6.8/3)--(2.5,2.8);
	\draw (3.6/3, 6.5/3)--(0.5,2.5);
	\end{scope}
	
	\end{tikzpicture}
	\caption{A $3$-dimensional fan and corresponding GKM graph. }
	\label{fig_GKM-graph_near_two_vertices}
\end{figure}
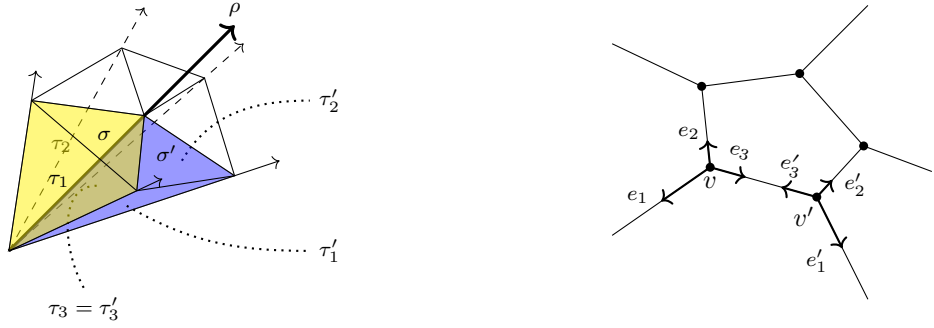
	
	Now, it is enough to show that $u_\rho$ satisfies the following relations:
	\begin{equation}\label{eq_linear_equations_arond_another_vert}
	\langle \alpha(e_i'), u_\rho \rangle =
	\begin{cases}
	1 & \text{ if } i = 1,\\
	0 & \text{ if } 2 \leq i \leq n
	\end{cases}
	\end{equation}
	For the given GKM graph $(\Gamma, \alpha)$ and 
	the connection $\theta=\{\theta_e \mid e\in E(\Gamma)\}$, consider 
	\[
	\theta_{e_n} \colon \{e_1, \dots, e_n\} \to \{e'_1, \dots, e'_n\}.
	\]
	Since the closure $\overline{O(\rho)}$ of the orbit $O(\rho)$ is a 
	toric subvariety of $X_\Sigma$, the subgraph by taking vertices corresponding to 
	maximal cones containing $\rho$ is indeed a GKM-subgraph,   
	whose connection is inherited from the original one $\theta$. Therefore
	$\theta_{e_n}$ 
	maps $\{e_2, \dots, e_n\}$ bijectively to $\{e_2', \dots, e_n'\}$.
	Hence we have that $\theta_{e_n} (e_1) =e_1'$. 
	For convenience, we assume that $\theta_{e_n}(e_i)=e_i'$ for $i=1, \dots, n$. 
	
	For $1 \leq i \leq n$, we have the relation
	\[
	\alpha(e'_i)=\alpha(e_i) + c_i \alpha(e_n),
	\]
	for some $c_i \in \Z$. 
	Hence the equations \eqref{eq_rel_bet_char_fun_and_axial_ftn} become
	\[
	\langle \alpha(e_i') - c_i \alpha(e_n), u_\rho \rangle = 
	\begin{cases}
	1 & \text{ if } i =1,\\
	0 & \textrm{ if } 2 \leq i \leq n
	\end{cases}
	\]
	which turn out to be 
	the relations \eqref{eq_linear_equations_arond_another_vert}, because 
	$\langle \alpha(e_n), u_\rho \rangle=0$. Hence the result follows. 
\end{proof}

Now we give a proof of Theorem~\ref{thm_main_thm_1}. 
By Lemma~\ref{lemma_orbit_of_X}, we know that the combinatorial structure of the fan $\Sigma$ is given as in~Theorem~\ref{thm_main_thm_1}(2). Now it is enough to show that the ray generators are given as in~Theorem~\ref{thm_main_thm_1}(1).

For a given $1 \leq \ell \leq m$ and a nonempty proper subset $A$ of
$[n_{\ell}+1]$, consider a ray $\rho^{\ell}(A)$ of $\Sigma$. 
To compute the ray generator of $\rho^{\ell}(A)$,
it is enough to consider only one maximal cone containing $\rho^{\ell}(A)$ because of 
Lemma~\ref{lemma:independence of choice of vertex}. 

We note that there is one-to-one correspondence between the set of maximal cones in $\widetilde{\Sigma}$ and $\prod_{j=1}^m \mathfrak{S}_{n_j+1}$ as in~\ref{eq_chain_Ak}.
More precisely, for $(v_1,\dots,v_m) \in \prod_{j=1}^m \mathfrak{S}_{n_j+1}$, we define
\begin{equation}\label{eq_chain_Akp}
A^{\ell}_p := \{v(n_{\ell}+2-p),\dots,v(n_{\ell}+1)\} \quad
\text{ for } 1 \leq p \leq n_{\ell}, 1 \leq \ell \leq m.
\end{equation}
Moreover, for a given maximal cone indexed by $(v_1,\dots,v_m)$, the adjacent maximal cones $\sigma^j_i$ are determined by permutations
\begin{equation}\label{eq_permutations_adjacent}
(v_1,\dots,v_{j-1}, v_j \cdot s_i, v_{j+1},\dots,v_m)
\end{equation}
for $1 \leq i \leq n_j$ and $1  \leq j \leq m$.

From now on, 
set $A = \{ x_1 < x_2 < \cdots < x_{n_{\ell}+1-d} \}$ and
$[n_{\ell}+1] \setminus A = \{y_1<y_2<\cdots<y_d\}$. 
Define a permutation $v_{\ell,A}$ to be 
\begin{equation}\label{eq_def_of_v_ell}
v_{\ell,A} = (y_1 \ y_2  \cdots y_d \ x_1 \ x_2 \cdots x_{n_{\ell}+1-d})
\in \mathfrak{S}_{n_{\ell}+1}.
\end{equation}
Also define $\mathbf v := (v_1,\dots,v_{\ell},\dots,v_m) \in \prod_{j=1}^m \mathfrak{S}_{n_j+1}$ 
by setting $v_{\ell} = v_{\ell,A}$ and 
$v_j = e \in \mathfrak{S}_{n_j+1}$ for~$j \neq \ell$.
Then using~\eqref{eq_chain_Akp}, the maximal cone $\sigma_{\bf v}$ indexed by $\mathbf v$ contains the ray $\rho^{\ell}_A$.
We note that among adjacent maximal cones indexed by permutations in~\eqref{eq_permutations_adjacent}, the maximal cone $\sigma^{\ell}_d$ is the unique maximal cone which does not contain the ray $\rho^{\ell}_A$, because 
\begin{equation*}
	v_{\ell} \cdot s_d = 
	v_{\ell,A}(d,d+1)=(y_1 \ \cdots \ y_{d-1} \  x_1 \ y_d \ x_2 \ \cdots \ x_{n_{\ell}+1-d}).
	\end{equation*}

Because of Lemmas~\ref{lemma:char_ftn_and_axial_ftn}
and~\ref{lemma:independence of choice of vertex},
it is enough to show that the vector 
\[
u^{\ell}_A =	\begin{cases}
\mathlarger\sum_{x \in A} \ep_{\ell,x} & \text{ if }	
n_{\ell}+1 \notin A, \\
\mathlarger\sum_{x \in [n_{\ell}+1]\setminus A} -\ep_{\ell,x}
+\mathlarger\sum_{j=\ell+1}^{m} \mathlarger\sum_{k=1}^{n_j} a^j_{k,\ell} \ep_{j,k}
& \text{ otherwise}
\end{cases}
\]
in Theorem~\ref{thm_main_thm_1}
satisfies the following equations:
\[
\langle \alpha(e^j_i), u^{\ell}_A \rangle =
\begin{cases}
1& \text{ if } j = \ell \text{ and }i = d, \\
0& \text{ otherwise},
\end{cases}
\]
where $e^j_i$ is an edge of the GKM graph $\Gamma$ of $X$ corresponding to the facet $\sigma_{\bf v} \cap \sigma^j_i$ of the maximal cone $\sigma_{\bf v}$,  and $\alpha$ is the axial function $\alpha \colon E(\Gamma) \to \mathfrak{t}_{\mathbb{Z}}^{\ast}$.

To prove the claim, we separate cases as $j < \ell$,
$j = \ell$, and $j > \ell$. 
\begin{enumerate}
\item[\textbf{Case 1}] {$j < \ell$.}
By Theorem~\ref{thm_GKM_of_Fm},
the axial functions
of the edge $\alpha(e^j_i)$ is a linear combination of 
{
	\def\OldComma{,}
	\catcode`\,=13
	\def,{%
		\ifmmode%
		\OldComma\discretionary{}{}{}%
		\else%
		\OldComma%
		\fi%
	}%
$\ep_{1,1}^{\ast},\dots,\ep_{1,n_1}^{\ast},\dots,
\ep_{j,1}^{\ast}, \dots, \ep_{j,n_j}^{\ast}$.
}
On the other hand,
since $u^{\ell}_A$ is a linear combination of 
{
	\def\OldComma{,}
	\catcode`\,=13
	\def,{%
		\ifmmode%
		\OldComma\discretionary{}{}{}%
		\else%
		\OldComma%
		\fi%
	}%
$\ep_{\ell,1},\dots,
\ep_{\ell,n_{\ell}},\dots,\ep_{m,1},\dots,\ep_{m, n_m}$} and $j < \ell$,
their pairings always vanish.

\item[\textbf{Case 2}] $j = \ell$.
By Theorem~\ref{thm_GKM_of_Fm},
the axial functions of the edge $\alpha(e^{\ell}_i)$ is 
a linear combination of 
{
	\def\OldComma{,}
	\catcode`\,=13
	\def,{%
		\ifmmode%
		\OldComma\discretionary{}{}{}%
		\else%
		\OldComma%
		\fi%
	}%
$\ep_{1,1}^{\ast},
\dots,\ep_{1,n_1}^{\ast},\dots,\ep_{\ell,1}^{\ast}, \dots, 
\ep_{\ell,n_{\ell}}^{\ast}$}. More precisely, we have that 
\[
\alpha(e^{\ell}_i) = (\ep_{\ell, v_{\ell,A}(i+1)})^{\ast}-
(\ep_{\ell,v_{\ell,A}(i)})^{\ast} + \text{ other terms},
\]
where `other terms' are the terms of 
$\ep_{p,k}^{\ast}$ for $p < \ell$ and
$v_{\ell,A}$ is a permutation defined in \eqref{eq_def_of_v_ell}.
Since the vector $u^{\ell}_A$ is a linear combination of 
$\ep_{\ell,1},\dots,\ep_{\ell,n_{\ell}},\dots,
\ep_{m,1},\dots,\ep_{m,n_m}$, we have
\begin{equation}\label{eq_alpha_and_lambda}
\langle
\alpha(e^{\ell}_i), u^{\ell}_A
\rangle
= \langle
(\ep_{\ell, v_{\ell,A}(i+1)})^{\ast} - (\ep_{\ell,
	v_{\ell,A}(i)})^{\ast}, u^{\ell}_A
\rangle.
\end{equation}
Because of the definition of the permutation $v_{\ell,A}$, we have that 
$v_{\ell,A}(i) \in A $ if and only if $i \geq d+1$. 
Therefore for the case when $n_{\ell}+1 \notin A$, we have that
the value $\langle (\ep_{\ell, v_{\ell,A}(i)})^{\ast}, u^{\ell}_A \rangle$ 
equals to 
$0$ if $i \leq d$, and $1$ otherwise.  
Also for the case when $n_{\ell}+1 \in A$, we get that
the pairing 
$\langle (\ep_{\ell, v_{\ell,A}(i)})^{\ast}, u^{\ell}_A \rangle$
is $-1$ if $i \leq d$ and $0$ otherwise. 

By applying \eqref{eq_alpha_and_lambda} for 
$n_{\ell} +1 \notin A$, we have the following:
\[
\langle \alpha(e^{\ell}_i), u^{\ell}_A
\rangle = \begin{cases}
0-0 =0 & \text{ for } 1 \leq i < d, \\
1-0 = 1 & \text{ for } i = d,\\
1-1=0& \text{ for }  d < i \leq n_{\ell}.
\end{cases}
\]
Similarly, when $n_{\ell}+1 \in A$, we get 
the following:
\[
\langle \alpha(e^{\ell}_i),u^{\ell}_A\rangle
= \begin{cases}
-1-(-1) = 0 &\text{ for } 1 \leq i < d, \\
0 - (-1) = 1 & \text{ for } i = d, \\
0-0=0 & \text{ for } d < i \leq n_{\ell}.
\end{cases}
\]
\item[\textbf{Case 3}] $j > \ell$. 
The matrix $\X{\ell}{j}$ in Proposition~\ref{prop_fBT_is_GKM} is 
\[
\X{j}{\ell}=
\sum_{\ell < i_1 < \cdots < i_r < j}
\left( B_j \A{j}{i_r} \right)
\left( B_{i_r} \A{i_r}{i_{r-1}} \right)
\cdots
\left( B_{i_1} \A{i_1}{\ell} \right)
B_{\ell} + B_j \A{j}{\ell} B_{\ell}.
\]
Since $v_j = e$ for $j \neq \ell$, the matrix $\X{j}{\ell}$ can
be written by
\[
\X{j}{\ell} =
\left(\sum_{\ell < i_1 < \cdots < i_r < j}
\A{j}{i_r} 
\A{i_r}{i_{r-1}}
\cdots
\A{i_1}{\ell} + \A{j}{\ell}\right)
B_{\ell}.
\]
By Proposition~\ref{prop:lambda_of_asso_gBT},
the matrix $\A{j}{i}$ has nonzero entries only
on the first column. The matrix $B_{\ell}$ is the row permutation
matrix corresponding to $v_{\ell,A}$, so that
$B_{\ell}$ is the column permutation matrix
corresponding to $v_{\ell,A}^{-1}$. 
Hence by multiplying the matrix $B_{\ell}$ on the right, 
the matrix $\X{j}{\ell}$ 
has nonzero entries only on the $y_1$th column. 
\begin{enumerate}
\item[\textbf{Subcase 1}] $n_{\ell}+1 \notin A$.
Since the matrix $\X{j}{\ell}$ 
has nonzero entries only on the $y_1$th column, 
we have that $\langle \alpha(e^j_i), u^{\ell}_A \rangle = 0$
for all $j > \ell$.  

\item[\textbf{Subcase 2}] $n_{\ell}+1 \in A$.
For a pair $(p, j)$ such that $\ell < p < j \leq m$,
the matrix $\X{j}{p}$ has nonzero entries only on the
first column. 
For simplicity, for $\ell < p < j$,
denote the $(i,1)$-entry of $\X{j}{p}$ by $x^{(j)}_{p,i}$.
Similarly, denote the $(i, y_1)$-entry of $\X{j}{\ell}$ by
$x^{(j)}_{\ell,i}$.
Then we have the following:
\begin{align*}
\langle
\alpha(e^j_i), u^{\ell}_A
\rangle
&=  \left \langle 
(x^{(j)}_{\ell,i+1} - x^{(j)}_{\ell,i})(\ep_{\ell, y_1})^{\ast}
+ \sum_{p = \ell+1}^{j-1}(x^{(j)}_{p,i+1} - x^{(j)}_{p,i})(\ep_{p,1})^{\ast}
+ (\ep_{j, i+1})^{\ast} - (\ep_{j, i})^{\ast}, u^{\ell}_A
\right \rangle \\
&= \left \langle 
(x^{(j)}_{\ell,i+1} - x^{(j)}_{\ell,i})(\ep_{\ell, y_1})^{\ast},
-(\ep_{\ell, y_1} + \cdots + \ep_{\ell, y_d}) 
\right \rangle \\
& \quad \quad \quad + \left \langle
\sum_{p = \ell+1}^{j-1}(x^{(j)}_{p,i+1} - x^{(j)}_{p,i})(\ep_{p,1})^{\ast}
+ (\ep_{j, i+1})^{\ast} - (\ep_{j,i})^{\ast}, 
\sum_{p=\ell+1}^m \sum_{k=1}^{n_p} a^p_{k,\ell} \ep_{p,k}
\right \rangle \\
&= (-1)(x^{(j)}_{\ell,i+1} - x^{(j)}_{\ell,i})
+ \sum_{p=\ell+1}^{j-1}
(x^{(j)}_{p,i+1} - x^{(j)}_{p,i})(a^p_{1,\ell})
+ (a^j_{i+1,\ell} - a^{j}_{i,\ell}).
\end{align*}
To show the above pairing vanishes, it is enough to show that
\[
x^{(j)}_{\ell,i} = 
\sum_{p=\ell+1}^{j-1}
x^{(j)}_{p,i}a^p_{1,\ell} + a^j_{i,\ell} \quad
\text{ for all } i,
\]
which comes from the definition of $\X{j}{\ell}$:
\begin{align*}
\X{j}{\ell}B_{\ell}^{-1} &=
\sum_{\ell < i_1 < \cdots < i_r < j}
\A{j}{i_r} 
\A{i_r}{i_{r-1}}
\cdots
\A{i_1}{\ell} + \A{j}{\ell}\\
&= 
\X{j}{j-1}\A{j-1}{\ell}
+ \cdots 
+ \X{j}{\ell+2} \A{\ell+2}{\ell} 
+ \X{j}{\ell+1} \A{\ell+1}{\ell}
+ \A{j}{\ell} \\
&= \sum_{p=\ell+1}^{j-1} \X{j}{p} \A{p}{\ell}
+ \A{j}{\ell}.
\end{align*}
Hence we have  $\langle \alpha(e^j_i), u^{\ell}_A \rangle = 0$
for all $j > \ell$. 
\end{enumerate}
\end{enumerate}

	Now we prove the smoothness. 
	Since the permutohedral variety $X_n$ is nonsingular (see~\cite[Corollary of Theorem 3.3]{Dabr96}), for a proper chain $\emptyset \subsetneq A_1 \subsetneq \cdots \subsetneq A_n \subsetneq [n+1]$ of nonempty proper subsets of $[n+1]$, we have that
	\begin{equation}\label{eq_perm_is_smooth}
	\det[ u_{A_1} \ u_{A_2} \ \cdots  \ u_{A_n}] = \pm 1. 
	\end{equation}
	To show that a generic torus orbit closure is smooth, it is enough to show that every maximal cone in $\Sigma$ is smooth. For a maximal cone indexed by $(A^1_{\bullet},\dots,A^m_{\bullet})$, consider the matrix whose column vectors are the corresponding ray generators:
	\begin{equation}\label{eq_smoothness}
	\left[ u^1_{A^1_1} \ \cdots \ u^1_{A^1_{n_1}} \ \cdots \ u^m_{A^m_1} \ \cdots \ u^m_{A^m_{n_m}} \right].
	\end{equation}
	Then the matrix~\eqref{eq_smoothness} is a block lower triangular matrix whose sizes of blocks are $n_1,\dots,n_m$.
	Moreover, the determinant of the matrix in~\eqref{eq_smoothness} is 
	\[
	\det \left( \left[ u_{A^1_1} \ \cdots \ u_{A^1_{n_1}} \right]\right) \cdot
	\det \left( \left[ u_{A^2_1} \ \cdots \ u_{A^2_{n_2}} \right]\right) \cdots
	\det \left( \left[ u_{A^m_1} \ \cdots \ u_{A^m_{n_m}} \right]\right) = \pm 1
	\]
	by~\eqref{eq_perm_is_smooth}. Here $\{u_{A^{\ell}_1},\dots,u_{A^{\ell}_{n_{\ell}}}\}$ is the set of ray generators of the maximal cone in the fan of $X_{n_{\ell}}$ indexed by the proper chain $\emptyset \subsetneq A^{\ell}_1 \subsetneq \cdots \subsetneq A^{\ell}_{n_{\ell}} \subsetneq [n_{\ell}+1]$ for $1 \leq \ell \leq m$. This proves that the closure of a generic torus orbit in the associated flag Bott manifold is smooth. 

%%%%%%%%%%%%%%%%%%%%%%%%%%%%%%%%%%%%%%%%%%%%%%%%%%%%%%%%%%%%%%%%%%%%%%%%
%
%
%\bibliographystyle{amsalpha}
%\bibliography{ref}

\providecommand{\bysame}{\leavevmode\hbox to3em{\hrulefill}\thinspace}
\providecommand{\MR}{\relax\ifhmode\unskip\space\fi MR }
% \MRhref is called by the amsart/book/proc definition of \MR.
\providecommand{\MRhref}[2]{%
	\href{http://www.ams.org/mathscinet-getitem?mr=#1}{#2}
}
\providecommand{\href}[2]{#2}

\end{document}